\newtheorem{theorem}{Theorem}[section]
\newtheorem{lemma}{Lemma}[section]
\newtheorem{proposition}{Proposition}[section]
\newtheorem{corollary}{Corollary}[section]
\theoremstyle{definition}
\newtheorem{remark}{Remark}[section]
\numberwithin{equation}{section}
\newcommand\blfootnote[1]{\begingroup\renewcommand\thefootnote{}\footnote{#1}\addtocounter{footnote}{-1}\endgroup}
\begin{document}

\title{
{\bf\Large Existence of positive solutions in the superlinear case via coincidence degree: the Neumann and the periodic boundary value problems }\footnote{Work performed under the auspices of the
Grup\-po Na\-zio\-na\-le per l'Anali\-si Ma\-te\-ma\-ti\-ca, la Pro\-ba\-bi\-li\-t\`{a} e le lo\-ro
Appli\-ca\-zio\-ni (GNAMPA) of the Isti\-tu\-to Na\-zio\-na\-le di Al\-ta Ma\-te\-ma\-ti\-ca (INdAM).}
\vspace{1mm}
}

\author{
{\bf\large Guglielmo Feltrin}
\vspace{1mm}\\
{\it\small SISSA - International School for Advanced Studies}\\
{\it\small via Bonomea 265}, {\it\small 34136 Trieste, Italy}\\
{\it\small e-mail: guglielmo.feltrin@sissa.it}\vspace{1mm}\\
\vspace{1mm}\\
{\bf\large Fabio Zanolin}
\vspace{1mm}\\
{\it\small Department of Mathematics and Computer Science, University of Udine}\\
{\it\small via delle Scienze 206},
{\it\small 33100 Udine, Italy}\\
{\it\small e-mail: fabio.zanolin@uniud.it}}

\date{}

\maketitle

\vspace{-2mm}

\begin{abstract}
\noindent
We prove the existence of positive periodic solutions for the second order nonlinear equation
$u'' + a(x) g(u) = 0$,
where $g(u)$ has superlinear growth at zero and at infinity. The weight function $a(x)$ is
allowed to change its sign. Necessary and sufficient conditions for the existence of nontrivial solutions are obtained.
The proof is based on Mawhin's coincidence degree and applies also to Neumann boundary conditions.
Applications are given to the search of positive solutions for a nonlinear PDE in annular domains
and for a periodic problem associated to a non-Hamiltonian equation.
\blfootnote{\textit{AMS Subject Classification:} 34B18, 34B15, 34C25, 47H11.}
\blfootnote{\textit{Keywords:} superlinear indefinite problems, positive solutions, coincidence degree.}
\end{abstract}

\section{Introduction}\label{section1}

Let ${\mathbb{R}}^{+}:=\mathopen{[}0,+\infty\mathclose{[}$ denote the set of non-negative real numbers and let
$g\colon {\mathbb{R}}^{+} \to {\mathbb{R}}^{+}$ be a continuous function such that
\begin{equation*}
g(0) = 0, \qquad g(s) > 0 \quad  \text{for } \; s > 0.
\leqno{\hspace*{2.2pt}(g_{1})}
\end{equation*}
In the present paper we study the second order nonlinear boundary value problem
\begin{equation*}
\begin{cases}
\, u'' + a(x) g(u) = 0, \quad 0 < x < T, \\
\, {\mathscr{B}}(u,u') = \underline{0}.
\end{cases}
\leqno{\hspace*{2.2pt}({\mathscr{P}})}
\end{equation*}
As linear boundary operator we take
\begin{equation*}
{\mathscr{B}}(u,u') = (u'(0), u'(T))
\end{equation*}
or
\begin{equation*}
{\mathscr{B}}(u,u') = (u(T) - u(0), u'(T) - u'(0)),
\end{equation*}
so that we consider the Neumann and the periodic boundary value problems.
The weight $a(x)$ is a Lebesgue integrable function defined on $\mathopen{[}0,T\mathclose{]}$.
A \textit{solution} of $({\mathscr{P}})$ is a continuously differentiable function $u \colon \mathopen{[}0,T\mathclose{]}\to {\mathbb{R}}$
such that its derivative $u'(x)$ is absolutely continuous and $u(x)$ satisfies $({\mathscr{P}})$
for a.e.~$x\in\mathopen{[}0,T\mathclose{]}$.
We look for \textit{positive solutions} of $({\mathscr{P}})$,
that is solutions $u$ such that $u(x)>0$ for every $x\in \mathopen{[}0,T\mathclose{]}$.
Of course, if $a(x)$ is continuous, then $u(x)$ is a classical solution of class $\mathcal{C}^{2}$.
In relation to the Neumann and the periodic boundary value problems,
assumption $(g_{1})$, which requires that $g(s)$ never vanishes on ${\mathbb{R}}^{+}_{0}:=\mathopen{]}0,+\infty\mathclose{[}$,
is essential to guarantee that the positive solutions we find are not constant.

Boundary value problems associated to second order differential equations of the form
\begin{equation*}
u'' + a(x)g(u) = 0
\end{equation*}
arise from many different areas of research, in particular they play a relevant role in
the study of stationary solutions of reaction-diffusion equations.
In this context, the search of positive solutions is of great interest
in many applications to population dynamics and mathematical ecology
(see~\cite{BaPoTe-1987,BaPoTe-1988,GoReLoGo-2000} and also \cite{Ac-2009} for a recent survey on that topic).

If $u(x)$ is any positive solution to the BVP $({\mathscr{P}})$, then an integration on $\mathopen{[}0,T\mathclose{]}$ yields
$\int_{0}^{T} a(x)g(u(x)) ~\!dx = 0$ and this fact, in connection with $(g_{1})$, implies that the weight function
$a(x)$ (if not identically zero) must change its sign. A second relation can be derived when $g(s)$
is continuously differentiable on ${\mathbb{R}}^{+}_{0}$.
Indeed, dividing the equation by $g(u(x))$ and integrating by parts, we obtain
\begin{equation*}
\int_{0}^{T} {g'(u(x))}\biggl(\dfrac{u'(x)}{g(u(x))}\biggr)^{\!2}~\!dx = -\int_{0}^{T} a(x)~\!dx
\end{equation*}
(cf.~\cite{BaPoTe-1988,BoZa-2012}). From this relation, if $g'(s)> 0$ on ${\mathbb{R}}^{+}_{0}$, we find that
a necessary condition for the existence of positive solutions is
\begin{equation*}
\int_{0}^{T} a(x)~\!dx < 0.
\end{equation*}

The above remarks suggest that, if we want to find nontrivial positive solutions for $({\mathscr{P}})$
with nonlinearities which include as a particular possibility the case of $g(s)$ strictly monotone,
we have to study problem $({\mathscr{P}})$ considering sign-indefinite weight functions with negative mean value
on $\mathopen{[}0,T\mathclose{]}$.

In this paper we study the case of nonlinearities $g(s)$ which have a superlinear growth at zero and at
infinity. Boundary value problems of this form are usually named of \textit{superlinear indefinite} type (cf.~\cite{BeCaDoNi-1994}).
In the past twenty years a great deal of existence and multiplicity results have been reached in this
context, mainly with respect to Dirichlet boundary conditions. The starting point for our investigation is the following result
obtained in \cite{GaHaZa-2003} for the two-point boundary value problem.

\begin{theorem}\label{th-1.1}
Assume that $\{x\in \mathopen{[}0,T\mathclose{]} \colon a(x) > 0\} = \bigcup_{i=1}^{k} J_{i} \neq \emptyset$, where the sets $J_{i}$
are pairwise disjoint intervals, and suppose that
\begin{equation}\label{eq-1.1}
\limsup_{s\to 0^{+}} \dfrac{g(s)}{s} < \lambda_{0} \quad \text{ and } \quad
\liminf_{s\to +\infty} \dfrac{g(s)}{s} > \max_{i=1,\ldots,k}\lambda_{1}^{i},
\end{equation}
where $\lambda_{0}$ is the first eigenvalue of the eigenvalue problem
\begin{equation}\label{eq-1.2}
\varphi'' + \lambda a^{+}(x) \varphi = 0, \quad \varphi(0) = \varphi(T) = 0,
\end{equation}
and $\lambda_{1}^{i}$ ($i=1,\ldots,k$) is the first eigenvalue of the eigenvalue problem
\begin{equation*}
\varphi'' + \lambda a(x) \varphi = 0, \quad \varphi|_{\partial J_{i}} = 0.
\end{equation*}
Then there is at least one solution to
\begin{equation}\label{eq-1.3}
u'' + a(x) g(u) = 0, \quad u(0) = u(T) = 0,
\end{equation}
which is positive on $\mathopen{]}0,T\mathclose{[}$.
\end{theorem}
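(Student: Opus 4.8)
The plan is to recast the Dirichlet problem \eqref{eq-1.3} as a fixed point problem in $\mathcal{C}(\mathopen{[}0,T\mathclose{]})$ and to compute a topological index. First I would truncate the nonlinearity, setting $\tilde{g}(s) := g(s)$ for $s \ge 0$ and $\tilde{g}(s) := 0$ for $s < 0$, and rewrite the equation as $u = \Phi(u)$, where
\[
\Phi(u)(x) := \int_{0}^{T} G(x,\xi)\, a(\xi)\, \tilde{g}(u(\xi)) \,d\xi
\]
and $G(x,\xi)$ is the Green function of $-\varphi'' = 0$ under $\varphi(0) = \varphi(T) = 0$. Since $a \in L^{1}$ and $\tilde{g}(u)$ is continuous and bounded on bounded sets, standard estimates on $\Phi(u)$ and $\Phi(u)'$ together with the Arzel\`a--Ascoli theorem show that $\Phi$ is completely continuous. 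A preliminary observation is that every fixed point $u$ of $\Phi$ is nonnegative: on each maximal interval where $u < 0$ one has $u'' = -a\,\tilde{g}(u) = 0$, so $u$ is affine there and, vanishing at both endpoints of such an interval (either interior zeros of $u$ or the boundary points $0$ and $T$), it is identically zero. As $\tilde{g} = g$ on $\mathbb{R}^{+}$, any nontrivial fixed point is then a nonnegative solution of \eqref{eq-1.3}.

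Next I would exploit the growth condition at zero. From $\limsup_{s\to 0^{+}} g(s)/s < \lambda_{0}$, choose $\eta < \lambda_{0}$ and $\rho > 0$ with $\tilde{g}(s) \le \eta s$ for $0 \le s \le \rho$, so that $a(\xi)\tilde{g}(u(\xi)) \le \eta\, a^{+}(\xi)\, u(\xi)$ whenever $0 \le u \le \rho$. Because $\lambda_{0}$ is the first eigenvalue of \eqref{eq-1.2}, the linear operator $u \mapsto \eta \int_{0}^{T} G(x,\xi)\, a^{+}(\xi)\, u(\xi)\,d\xi$ has spectral radius $\eta/\lambda_{0} < 1$; a standard homotopy to the zero map then shows that for $0 < r \le \rho$ the only fixed point of $\Phi$ in $\overline{B}_{r}$ is $u \equiv 0$, and that the index of $\Phi$ on $B_{r}$ equals $1$.

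The crucial step is the a priori bound coming from the growth at infinity, and this is the step I expect to be the main obstacle. Fix $\mu > \max_{i} \lambda_{1}^{i}$ and $s_{0}$ with $\tilde{g}(s) \ge \mu s$ for $s \ge s_{0}$. Suppose, for contradiction, a sequence of fixed points $u_{n}$ with $\|u_{n}\|_{\infty} \to +\infty$. Selecting the index $i$ realizing $\max_{i}\lambda_{1}^{i}$ and restricting $u_{n}$ to $J_{i} = \mathopen{[}\alpha_i,\beta_i\mathclose{]}$, where $a \ge 0$, one has $u_{n}'' + \mu a\, u_{n} \le 0$ wherever $u_{n} \ge s_{0}$. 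Testing against the positive eigenfunction $\varphi_{i}$ of $\varphi'' + \lambda_{1}^{i} a \varphi = 0$, $\varphi_{i}|_{\partial J_{i}} = 0$, a Sturm-type comparison (or a Picone identity) forces the positive function $u_{n}$ to vanish inside $J_{i}$, a contradiction. The delicate point is to ensure that $u_{n}$ actually lies above the threshold $s_{0}$ on a portion of $J_{i}$ that the comparison can detect; this is handled by a rescaling/blow-up argument exploiting the concavity of $u_{n}$ on $J_{i}$. One thus obtains a uniform bound $\|u\|_{\infty} \le R$ for all fixed points.

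Finally I would show that the index of $\Phi$ on a large ball $B_{R'}$, with $R' > R$, vanishes, using the admissible homotopy $u = \Phi(u) + \tau\, e$ with $e$ a nonnegative multiple of $\varphi_{i}$: the analysis above rules out solutions on the sphere $\|u\|_{\infty} = R'$ for $\tau \ge 0$, while for $\tau$ large no solution exists at all, so the index is $0$. By additivity and excision, since the index is $1$ on $B_{r}$ and $0$ on $B_{R'}$, there is a fixed point $u$ with $r \le \|u\|_{\infty} \le R'$, i.e.\ a nontrivial nonnegative solution of \eqref{eq-1.3}. For strict positivity on $\mathopen{]}0,T\mathclose{[}$, note that $\limsup_{s\to 0^{+}} g(s)/s < \infty$ makes $g(u(x))/u(x)$ extend to a bounded function, so $u$ solves a linear equation $u'' + c(x)u = 0$ with $c \in L^{\infty}$; an interior zero would be a double zero of $u$ and hence force $u \equiv 0$ by uniqueness for the Cauchy problem, a contradiction.
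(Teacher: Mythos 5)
A preliminary remark on the comparison: this paper does not prove Theorem~\ref{th-1.1} at all --- it imports it from \cite{GaHaZa-2003}, where it was obtained by constructing a pair of non-well-ordered lower and upper solutions, and it credits \cite{FeZa-pre2014} with a degree-theoretic proof exploiting the invertibility of $u\mapsto -u''$ under Dirichlet conditions. Your proposal is exactly that second route, and it mirrors the scheme this paper uses for its Neumann/periodic analogues (Theorem~\ref{th-2.1} together with Lemma~\ref{lemma_RJ}): truncate $g$, recover positivity by a maximum principle, get degree $1$ on small balls and degree $0$ on large balls, conclude by additivity. Two ingredients are correctly tailored to the Dirichlet case: plain Leray--Schauder degree with the Green operator (no coincidence degree is needed, since here the kernel of $-u''$ is trivial), and the small-ball computation via the spectral-radius bound $\eta/\lambda_{0}<1$, which is precisely the part that has no counterpart in this paper (for Neumann/periodic $\lambda_{0}=0$, and the authors must use the average condition $(a_{2})$ instead). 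Your truncation $\tilde{g}\equiv 0$ for $s<0$ is also legitimate here, because an affine function vanishing at the endpoints of an interval vanishes on it; note it would fail for the paper's boundary conditions, where negative constants would become spurious solutions --- which is why the paper truncates by $-s$.

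There is, however, one step that fails as written: the a priori bound. You fix the index $i$ realizing $\max_{i}\lambda_{1}^{i}$ and restrict the unbounded sequence $u_{n}$ to that particular $J_{i}$. But $\|u_{n}\|_{\infty}\to+\infty$ gives no information on $\max_{J_{i}}u_{n}$ for that $i$: the blow-up may occur on a different interval of positivity, in which case $u_{n}$ need never exceed the threshold $s_{0}$ on your chosen $J_{i}$ and the Sturm/Picone comparison there yields no contradiction. The maximum in \eqref{eq-1.1} plays the opposite role from the one you assign to it: since $\mu>\max_{i}\lambda_{1}^{i}\geq\lambda_{1}^{j}$ for \emph{every} $j$, the comparison is available on each $J_{j}$, and it must be run on whichever interval actually carries the blow-up. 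The missing link is a globalization step: off $\bigcup_{j}J_{j}$ one has $a\leq 0$, so fixed points are convex there, and together with $u_{n}(0)=u_{n}(T)=0$ this gives $\|u_{n}\|_{\infty}=\max_{j}\max_{J_{j}}u_{n}$; hence, up to a subsequence, $\max_{J_{j}}u_{n}\to+\infty$ for one fixed $j$, and the contradiction is reached on that $J_{j}$. Equivalently, prove a bound $R_{J_{j}}$ separately on every $J_{j}$ (this is exactly what Lemma~\ref{lemma_RJ} does) and combine the bounds by convexity. Without this, your claimed global bound $\|u\|_{\infty}\leq R$ does not follow from a bound on a single interval.

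Two further points are repairable but should be fixed. First, in the index-zero homotopy, taking $e$ a multiple of $\varphi_{i}$ extended by zero makes $u=\Phi(u)+\tau e$ have corner points at $\partial J_{i}$ (where $\varphi_{i}'\neq 0$), so such $u$ are not solutions of a Carath\'{e}odory ODE, their nonnegativity is no longer automatic, and ``the analysis above'' cannot be invoked verbatim; the clean choice --- the one behind $(H_{R})$ and Lemma~\ref{lemma-2.3} --- is to perturb inside the integral operator, i.e.\ $u=\Phi(u)+\alpha\int_{0}^{T}G(\cdot,\xi)\,v(\xi)\,d\xi$ with $0\leq v\in L^{1}$, $v\not\equiv 0$, supported in $\bigcup_{i}J_{i}$; then the a priori bound is uniform in $\alpha\geq 0$ by the same comparison, and for large $\alpha$ there is no solution in $\overline{B}_{R'}$ because $\|\Phi(u)\|_{\infty}$ is bounded there while the perturbation term is not. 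Second, in the final positivity argument the coefficient $c(x)=a(x)g(u(x))/u(x)$ lies only in $L^{1}$ (since $a\in L^{1}$), not in $L^{\infty}$; this is harmless --- uniqueness for the Cauchy problem holds for linear equations with $L^{1}$ coefficients, and $u''=0$ a.e.\ on $\{u=0\}$ because $g(0)=0$ --- but the claim should be stated accordingly.
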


An immediate corollary of this result can be attained by assuming, instead of \eqref{eq-1.1}, that
\begin{equation}\label{eq-1.4}
\lim_{s\to 0^{+}} \dfrac{g(s)}{s} =0 \quad \text{ and } \quad
\lim_{s\to +\infty} \dfrac{g(s)}{s} = +\infty,
\end{equation}
which means that $g(s)$ goes to zero (respectively, to infinity) faster than linear.
As far as we know, there is no counterpart of this theorem for the Neumann or the periodic problem $({\mathscr{P}})$.
Indeed, if we try to mimic the above result for our boundary conditions, we have that \eqref{eq-1.2} reads as
\begin{equation*}
\varphi'' + \lambda a^{+}(x) \varphi = 0,\quad {\mathscr{B}}(\varphi,\varphi') = \underline{0},
\end{equation*}
which has $\lambda_{0} = 0$ as first eigenvalue. Hence the assumption on the $\limsup$ in \eqref{eq-1.1}
becomes inconsistent in view of condition $(g_{1})$.

In the present paper we propose a version of Theorem~\ref{th-1.1} for problem $({\mathscr{P}})$ which can be stated as follows.

\begin{theorem}\label{th-1.2}
Assume that $\{x\in \mathopen{[}0,T\mathclose{]} \colon a(x) > 0\} = \bigcup_{i=1}^{k} J_{i} \neq \emptyset$, where the sets $J_{i}$
are pairwise disjoint intervals, and suppose that $g(s)$ is a smooth function satisfying $(g_{1})$ and such that
\begin{equation*}
\lim_{s\to 0^{+}} \dfrac{g(s)}{s} = 0 \quad \text{ and } \quad
g_{\infty}:=\liminf_{s\to +\infty} \dfrac{g(s)}{s} > \max_{i=1,\ldots,k}\lambda_{1}^{i}
\end{equation*}
(with $\lambda_{1}^{i}$ as above).
Then there is at least one positive solution of $({\mathscr{P}})$ provided that
$\int_{0}^{T} a(x) ~\!dx < 0$.
\end{theorem}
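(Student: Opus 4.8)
The plan is to recast $(\mathscr{P})$ as a coincidence equation $Lu=Nu$ and to produce a nontrivial solution by comparing Mawhin's coincidence degree on a small and on a large ball. First I would remove the sign constraint by truncating the nonlinearity: set $\tilde{g}(s):=g(s)$ for $s\geq 0$ and $\tilde{g}(s):=0$ for $s<0$, and study $u''+a(x)\tilde{g}(u)=0$ with the same boundary operator $\mathscr{B}$. Any solution of the truncated problem is nonnegative by a maximum-principle argument that uses $\tilde{g}\equiv 0$ on the negative axis (on $\{u<0\}$ one has $u''=0$, and the boundary conditions prevent a negative interior minimum). Moreover, since $g(s)/s\to 0$ as $s\to 0^{+}$ and $g$ is smooth, $\tilde{g}$ is Lipschitz near the origin with $g'(0)=0$; hence at a point where a nonnegative solution vanishes its derivative also vanishes, so by Carath\'eodory--Lipschitz uniqueness for the Cauchy problem such a solution is identically zero. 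Therefore every nontrivial nonnegative solution is strictly positive, and the constant solutions $u\equiv c>0$ are excluded because $(g_{1})$ would force $a\equiv 0$, contradicting $\int_{0}^{T}a\,dx<0$. It thus suffices to find a nontrivial solution of the truncated problem.

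For the degree framework I would take $X=\mathcal{C}^{1}(\mathopen{[}0,T\mathclose{]})$, $Z=L^{1}(\mathopen{[}0,T\mathclose{]})$, and $Lu=-u''$ on the domain encoding $\mathscr{B}$, so that $L$ is Fredholm of index zero with $\ker L$ equal to the constants and $\operatorname{Im}L=\{z:\int_{0}^{T}z=0\}$, while $Nu=a(x)\tilde{g}(u)$. The natural projectors are the averaging maps $Pu=\frac{1}{T}\int_{0}^{T}u$ and $Qz=\frac{1}{T}\int_{0}^{T}z$, and a solution is a zero of $L-N$. Two features drive the computation. First, the reduced map on $\ker L\cong\mathbb{R}$ is $\bar{u}\mapsto QN\bar{u}=\bar{a}\,\tilde{g}(\bar{u})$ with $\bar{a}=\frac{1}{T}\int_{0}^{T}a<0$, so $\int_{0}^{T}a\,dx<0$ fixes the sign of the kernel component and is what keeps the homotopies below admissible along the constants. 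Second, the finite-codimensional part is controlled by the growth of $g(s)/s$ measured against the eigenvalues $\lambda_{1}^{i}$.

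The small-ball degree I would obtain from the sublinearity at zero. Because $g(s)/s\to 0$, the linearization of $N$ at the origin vanishes, and an admissible homotopy $Lu=\vartheta Nu$ (suitably tilted within $\ker L$ so that the constants do not produce spurious zeros) has no nontrivial solution of small norm; I expect the resulting value $D_{L}(L-N,B_{r})=1$. The large-ball degree is the crux and the main obstacle. Here I would use $g_{\infty}>\max_{i}\lambda_{1}^{i}$ to establish, on each interval $J_{i}$ where $a>0$, an a priori bound for positive solutions by testing the equation against the Dirichlet eigenfunction of $\lambda_{1}^{i}$ on $J_{i}$ and exploiting $\liminf_{s\to+\infty}g(s)/s>\lambda_{1}^{i}$ to reach a contradiction for large amplitudes (this is the localized version of the Dirichlet analysis behind Theorem~\ref{th-1.1}). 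With such a bound in hand I would deform $N$ by a nonnegative forcing term $s\psi$ supported in some $J_{i}$, show that $Lu=Nu+s\psi$ has no solution of norm $R$ for $s\in\mathopen{[}0,s_{0}\mathclose{]}$ and no solution at all for $s=s_{0}$ large (superlinear nonexistence on $J_{i}$), and conclude $D_{L}(L-N,B_{R})=0$.

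Finally, by excision and additivity, $D_{L}(L-N,B_{R}\setminus\overline{B_{r}})=0-1=-1\neq 0$, which yields a solution $u$ of the truncated problem with $r\leq\|u\|\leq R$, hence nontrivial; by the first step this $u$ is strictly positive and non-constant, the desired positive solution of $(\mathscr{P})$. The delicate points I anticipate are the correct treatment of the one-dimensional kernel and of the mean-value constraint $\int_{0}^{T}a\,g(u)\,dx=0$ in the degree reduction, and the uniform a priori bound on the $J_{i}$ under the mere inequality $g_{\infty}>\lambda_{1}^{i}$ rather than $g_{\infty}=+\infty$, together with the verification that every homotopy stays free of solutions on the relevant boundaries; the remaining steps are routine bookkeeping within the coincidence degree.
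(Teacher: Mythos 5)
Your overall architecture --- truncate the nonlinearity, prove a maximum principle, then compare Mawhin's coincidence degree on a small and on a large set, getting degree $1$ at small scale and degree $0$ at large scale from $g_{\infty}>\max_{i}\lambda_{1}^{i}$ via Sturm comparison plus an added forcing term --- is the paper's architecture, and your large-ball step does match the paper's verification of $(H_{R})$ (Lemma~\ref{lemma_RJ} combined with Lemma~\ref{lemma-2.3}). But there are two genuine gaps. The first is your truncation $\tilde{g}(s):=0$ for $s<0$: with it, \emph{every negative constant} $u\equiv c<0$ solves the truncated problem (it satisfies $u''=0$, $a(x)\tilde{g}(c)\equiv 0$, and both the Neumann and the periodic conditions), so your maximum-principle claim is false as stated and, worse, the solution set of $Lu=Nu$ contains the whole ray of negative constants and therefore meets the boundary of \emph{every} bounded open set containing the origin: $D_{L}(L-N,B_{r})$ is not even defined, and Lemma~\ref{lemma_Mawhin} is inapplicable because $QNu=\bar{a}\,\tilde{g}(\bar{u})$ vanishes identically on the negative half of $\ker L$. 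No ``tilt within $\ker L$'' of the homotopy can repair this, since the obstruction lies in $N$ itself, not in the choice of homotopy. The paper's fix is to extend by $\tilde{f}(x,s):=-s$ for $s\leq 0$: then negative constants are not solutions, Lemma~\ref{Maximum principle}$(i)$ applies (the extension is $>0$ for $s<0$), and the reduced kernel map becomes $f^{\#}(s)=s$ for $s\leq 0$ and $f^{\#}(s)=-g(s)\,\bar{a}>0$ for $s>0$, giving $d_{B}(f^{\#},\mathopen{]}-r,r\mathclose{[},0)=1$.

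The second gap is the small-ball estimate. You justify the nonexistence of small nontrivial solutions of $Lu=\vartheta Nu$ by saying that ``the linearization of $N$ at the origin vanishes''; but for Neumann or periodic conditions the first eigenvalue of $\varphi''+\lambda a^{+}(x)\varphi=0$ is $\lambda_{0}=0$, so a vanishing linearization produces no spectral gap and by itself excludes nothing --- this is exactly the obstruction, pointed out in the Introduction, which prevents Theorem~\ref{th-1.1} from carrying over to $({\mathscr{P}})$. The correct proof of $(H_{r})$ must use $\int_{0}^{T}a(x)\,dx<0$ against \emph{all} small positive solutions, not only along the constants; in the smooth case of the statement the paper does this with the Riccati substitution $z=u'/g(u)$: from $z'+g'(u)z^{2}=-\vartheta a(x)$ and $g'(0)=0$ one first derives the uniform bound $\|z\|_{\infty}\leq \vartheta M$ for solutions of small amplitude, and then an integration over $\mathopen{[}0,T\mathclose{]}$ (where $z(0)=z(T)$) yields $0<-\bar{a}\leq M^{2}\max_{0\leq s\leq \|u\|_{\infty}}|g'(s)|$, a contradiction as $\|u\|_{\infty}\to 0^{+}$. (The alternative, for non-smooth $g$, is the ``regularly oscillating at zero'' argument of Theorem~\ref{MainTheorem}.) Until you supply this estimate, or an equivalent one, your claim $D_{L}(L-N,B_{r})=1$ is unsupported, and with it the whole excision step producing the solution.
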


To be more precise, the assumption that we actually need on the
weight function $a(x)$ is slightly more general than the one in
Theorem~\ref{th-1.2} (see condition $(a_{1})$ in
Section~\ref{section3}). The condition on the average of $a(x)$
is new with respect to the Dirichlet
case, but, in relation to the Neumann and the periodic boundary
conditions, it becomes necessary in a way (at least for
nonlinearities with $g'(s) > 0$). We underline that the hypothesis
of smoothness for $g(s)$ considered in Theorem~\ref{th-1.2} has
been chosen only to simplify the presentation and it can be
improved by requiring $g(s)$ continuously differentiable on a
right neighborhood of $s=0$ (as in Theorem~\ref{MainTheorem2}) or
only continuous but regularly oscillating at zero (as in
Theorem~\ref{MainTheorem}).

We notice that Theorem~\ref{th-1.2} allows to study both the case of nonlinearities which are superlinear at zero and at infinity, as in \eqref{eq-1.4},
and that of nonlinearities which are only superlinear at zero and with linear growth at infinity. With this respect, the following result holds.

\begin{theorem}\label{th-1.3}
Assume there exists an interval $J\subseteq \mathopen{[}0,T\mathclose{]}$ where $a(x) \geq 0$ for a.e.~$x\in J$ and also
\begin{equation*}
\int_{0}^{T} a(x) ~\!dx < 0 < \int_{J} a(x) ~\!dx.
\end{equation*}
Suppose that $g(s)$ is a smooth function satisfying $(g_{1})$ and such that
\begin{equation*}
g'(0) = 0 < g'(+\infty) < +\infty.
\end{equation*}
Then there exists $\nu^{*} > 0$ such that for each $\nu > \nu^{*}$ the problem
\begin{equation*}
\begin{cases}
\, u'' + \nu \, a(x) g(u) = 0, \quad 0 < x < T, \\
\, {\mathscr{B}}(u,u') = \underline{0}
\end{cases}
\leqno{\hspace*{2.2pt}({\mathscr{P}}_{\nu})}
\end{equation*}
has at least one positive solution.
\end{theorem}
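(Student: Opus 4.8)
The plan is to deduce Theorem~\ref{th-1.3} from Theorem~\ref{th-1.2} by treating the parameter $\nu$ as a rescaling device that converts the ``linear growth at infinity'' hypothesis $g'(+\infty) < +\infty$ into the ``superlinear-type'' threshold condition required by Theorem~\ref{th-1.2}. First I would record the asymptotic data: since $g$ satisfies $(g_{1})$ and $g'(0) = 0$, the superlinearity at zero, $\lim_{s\to 0^{+}} g(s)/s = 0$, holds automatically. Since $g'(+\infty) := \lim_{s\to+\infty} g'(s)$ exists, finite and positive, L'H\^opital (or a direct Ces\`aro-type argument) gives $g_{\infty} = \liminf_{s\to+\infty} g(s)/s = g'(+\infty) \in \mathopen{]}0,+\infty\mathclose{[}$.

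\smallskip

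Next I would apply Theorem~\ref{th-1.2} to the weight $\nu\,a(x)$ in place of $a(x)$. The positivity set of $\nu\,a$ coincides with that of $a$ (so the decomposition into the intervals $J_{i}$ is unchanged), and $\int_{0}^{T}\nu\,a(x)\,dx = \nu\int_{0}^{T}a(x)\,dx < 0$ for every $\nu>0$, so the sign and mean-value requirements are met. The key observation is how the eigenvalues $\lambda_{1}^{i}$ scale: if $\lambda_{1}^{i}$ is the first eigenvalue of $\varphi'' + \lambda\,a(x)\varphi = 0$ on $J_{i}$ with the appropriate endpoint conditions, then the corresponding first eigenvalue for the weight $\nu\,a$ is $\lambda_{1}^{i}/\nu$ (replacing $a$ by $\nu a$ and $\lambda$ by $\lambda/\nu$ leaves the eigenvalue equation invariant). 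Hence the hypothesis needed for Theorem~\ref{th-1.2} applied to $\nu\,a$ reads
\begin{equation*}
g_{\infty} = g'(+\infty) > \max_{i=1,\ldots,k} \dfrac{\lambda_{1}^{i}}{\nu}.
\end{equation*}

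\smallskip

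This inequality is what determines the threshold: setting
\begin{equation*}
\nu^{*} := \dfrac{\max_{i=1,\ldots,k}\lambda_{1}^{i}}{g'(+\infty)},
\end{equation*}
the condition is satisfied precisely when $\nu > \nu^{*}$. For each such $\nu$, all hypotheses of Theorem~\ref{th-1.2} hold for the problem with weight $\nu\,a$, which is exactly $({\mathscr{P}}_{\nu})$, yielding a positive solution. The only point that needs care is the passage from $g'(+\infty)<+\infty$ to the value of $g_{\infty}$ and the verification that the interval $J$ with $\int_{J} a > 0$ guarantees the positivity set is nonempty (so that $k\geq 1$ and $\max_i \lambda_1^i$ is well defined); I expect the main obstacle to be making the eigenvalue-scaling argument fully rigorous, in particular confirming that the first eigenvalue of the weighted problem is strictly positive and scales exactly by $1/\nu$, which relies on the Sturm--Liouville theory underlying the definition of $\lambda_{1}^{i}$. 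Once that scaling is pinned down, the result follows as a clean corollary.
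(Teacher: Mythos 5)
Your reduction has a genuine gap: Theorem~\ref{th-1.2} is not applicable under the hypotheses of Theorem~\ref{th-1.3}. Theorem~\ref{th-1.2} requires that the \emph{entire} positivity set $\{x\in\mathopen{[}0,T\mathclose{]} : a(x)>0\}$ be a finite union of pairwise disjoint intervals $J_{1},\ldots,J_{k}$, and its threshold condition involves $\max_{i}\lambda_{1}^{i}$ over \emph{all} of them. Theorem~\ref{th-1.3} assumes only that there is \emph{one} interval $J$ with $a\geq 0$ a.e.\ on $J$ and $\int_{J}a>0$; nothing is assumed about $a$ outside $J$, so the positivity set may consist of infinitely many intervals (or fail to be a union of intervals altogether), in which case the hypothesis of Theorem~\ref{th-1.2} is violated and, worse, the quantity $\max_{i}\lambda_{1}^{i}$ on which your $\nu^{*}$ is built need not exist: with infinitely many components of positivity, the supremum of the first eigenvalues is typically $+\infty$ (eigenvalues blow up as the intervals shrink), so no finite threshold can be extracted this way. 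This generality is not incidental -- the authors stress it (see the discussion before Corollary~\ref{cor-3.5} and Remark after Theorem~\ref{th-radial-3}, which explicitly allows weights with infinitely many sign changes).

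What you got right is the scaling mechanism: the first eigenvalue for the weight $\nu a$ is $\lambda_{1}/\nu$, and this is exactly how $\nu^{*}$ arises in the paper as well. But the paper routes it differently: Theorem~\ref{th-1.3} is deduced from Corollary~\ref{cor-3.7} (the smooth variant of Corollary~\ref{cor-3.5}), whose proof uses the degree-theoretic scheme of Theorem~\ref{th-2.1}. There the hypothesis $g'(+\infty)<+\infty$ (i.e.\ $\limsup_{s\to+\infty}g(s)/s<+\infty$, by the generalized de l'H\^{o}pital rule) is what makes a single interval suffice: Lemma~\ref{lemma_RJ} with the scaled weight $\nu a$ gives an a priori bound $R_{J}$ on $J$ alone once $\nu$ exceeds $\lambda_{1}^{J}/g_{\infty}$, and this bound is then propagated to all of $\mathopen{[}0,T\mathclose{]}$ by a Gronwall-type argument, which is available precisely because $g$ has at most linear growth at infinity. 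The behavior of $a$ off $J$ then plays no role in $(H_{R})$, while $(H_{r})$ is verified by the Riccati-type argument of Theorem~\ref{MainTheorem2} using $g'(0)=0$. To repair your proof you would either have to add the structural hypothesis on $\{a>0\}$ (thereby proving a strictly weaker statement), or replace the appeal to Theorem~\ref{th-1.2} by this one-interval-plus-Gronwall argument.
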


This theorem is sharp in the sense that we can also show that there are no positive solutions
to problem $({\mathscr{P}}_{\nu})$ if the parameter $\nu > 0$ is small (see Corollary~\ref{cor-3.7} and Proposition~\ref{prop-3.1}).

The above results are related to some recent theorems (cf.~\cite{Bo-2012, GrKoWa-2008, MaReTo-2014, To-2003})
concerning the existence of positive or sign-changing periodic solutions for
second order equations of the form
\begin{equation}\label{eq-1.5}
u'' + F(x,u) = 0,
\end{equation}
with $F(x,0) \equiv 0$. Although equation \eqref{eq-1.5} has a more general form than the equation in $({\mathscr{P}})$, our hypotheses
on $g(s)$ imply that $(\partial F/\partial u)(x,0) \equiv 0$,
a condition that does not allow to apply some of these results
or, when they can be applied, it involves solutions of non-constant sign.

\medskip

The original proof of Theorem~\ref{th-1.1} was obtained in \cite{GaHaZa-2003}
by a technique based on the construction of a pair of non well-ordered
lower and upper solutions. In our recent paper \cite{FeZa-pre2014} we have provided an extension of such a result (also with respect
to the multiplicity of solutions), using a topological degree approach. In \cite{FeZa-pre2014}, thanks to the fact that the operator
$u\mapsto -u''$  (subject to the Dirichlet boundary conditions) is invertible, we write
\eqref{eq-1.3} as an equivalent fixed point problem in a suitable Banach space and apply directly some degree theoretical arguments.
With respect to problem $({\mathscr{P}})$, the linear differential operator $u\mapsto -u''$ has a nontrivial kernel made by the constant functions.
In such a situation the operator is not invertible and we cannot proceed in the same manner as described above.
A possibility, already exploited in \cite{BeCaDoNi-1994}, is that of perturbing
the linear differential operator to a new one which can be inverted and then
recover the original equation via a limiting process and some careful estimates on the solutions.
In our case, we have found it very useful to apply the coincidence degree theory developed by J.~Mawhin,
which allows to study equations of the form $Lu = Nu$, where $L$ is a linear operator with nontrivial kernel and
$N$ is a nonlinear one. The use of the coincidence degree in the search for positive (periodic)
solutions is a widely used technique. For instance, in \cite{GaSa-1982, Sc-1976} a
coincidence theory on positive cones was initiated and developed, with applications to the search of
nontrivial non-negative periodic solutions. Our approach, however, is different and uses the classical technique of
extending the nonlinearity on the negative reals and, subsequently, proving that the nontrivial solutions are
positive, via a maximum principle (see Lemma~\ref{Maximum principle}). The existence of nontrivial solutions for the modified equation
follows by the additivity property of the coincidence degree, showing that the coincidence degree is equal to $1$ on small balls
(this follows from the condition on $\int a(x)$) and it is $0$ on large balls (this follows from $g_{\infty} >  \max \lambda_{1}^{i}$).
The advantage in using a topological degree approach lies also on the fact that, once we have found an open bounded set where the
degree is non-zero, we know that such a result is stable under small perturbations of the operator. Thus our theorems also
apply to equations which are small perturbations of the equation in problem $({\mathscr P}).$ For example, we
could even add to the equation small terms of a functional form, such as terms of (non-local) integral type or with a delay.
Of course, in such a case, to provide positive solutions, one should look for a suitable maximum principle.
In particular, Theorem~\ref{th-1.2} holds for equations as
\begin{equation*}
u'' + (a(x) + \varepsilon) g(u) = \delta u
\end{equation*}
for $|\varepsilon|$ and $|\delta|$ small enough.
\medskip

The plan of the paper is the following. In Section~\ref{section2} we recall some basic facts about Mawhin's coincidence degree
and we state two lemmas for the computation of the degree (see Lemma~\ref{lemma_Mawhin} and Lemma~\ref{lemma-2.3}).
These results are then applied in the second part of the same section to provide an existence theorem (see Theorem~\ref{th-2.1})
for positive solutions for a general problem of the form
\begin{equation*}
\begin{cases}
\, u'' + f(x,u,u') = 0, \quad 0 < x < T, \\
\, {\mathscr{B}}(u,u') = \underline{0}.
\end{cases}
\end{equation*}
The results of Section~\ref{section2} are then employed in Section~\ref{section3} in order to obtain two main results
for problem $({\mathscr{P}})$ under different conditions on the behavior of $g(s)$ near zero
(see Theorem~\ref{MainTheorem} and Theorem~\ref{MainTheorem2}). Various corollaries and applications are also derived.
In Section~\ref{section4} we present two different applications where we treat separately the Neumann and the periodic
problem. More precisely, in Section~\ref{section4.1} we prove an existence result of positive
radially symmetric solutions for a superlinear PDE subject to Neumann boundary conditions, while in
Section~\ref{section4.2} we provide positive periodic solutions to a Li\'{e}nard type equation.
We stress that in this latter case we can give an application of our method to a non-variational
setting, indeed the associated equation has not an Hamiltonian structure.
Throughout the paper we focus our study only to the case of the existence of nontrivial solutions. It seems reasonable to combine the
methods recently developed in \cite{FeZa-pre2014} with those of the present article in order to achieve multiplicity results of positive solutions.
This is briefly discussed in Section~\ref{section5}.
Some basic facts and technical estimates required for the proof of the main results are borrowed from our paper
\cite{FeZa-pre2014}. We insert them (with the details of the proofs) in an appendix (Section~\ref{Appendix}) for the reader's convenience.

\medskip

We end this introductory section with some definitions used in the article.
We denote by
\begin{equation*}
a^{+}(x) = \max\{a(x),0\} \quad \text{ and } \quad a^{-}(x) = \max\{-a(x),0\}
\end{equation*}
the \textit{positive part} and the \textit{negative part} of $a(x)$, respectively.

A function $g\colon\mathbb{R}^{+}\to\mathbb{R}^{+}$ satisfying $(g_{1})$ is \textit{regularly oscillating at zero} if
\begin{equation*}
\lim_{\substack{s\to0^{+} \\ \omega\to1}}\dfrac{g(\omega s)}{g(s)}=1.
\end{equation*}
This definition is the natural transposition for $s\to 0^{+}$ of the usual definition of
\textit{regularly oscillating (at infinity)} considered by several authors (see \cite{Be-1983}).
Regular oscillating functions are a class of maps
related to the study of Karamata regular variation theory and its many ramifications (cf.~\cite{BiGoTe-1987, Se-1976}).
They naturally appear in many different areas of real analysis like probability theory and qualitative theory
of ODEs (see \cite[\S~1]{DjTo-2001} for a brief historical survey about this subject).

\section{Abstract setting}\label{section2}

In the first part of this section we recall and summarize some basic properties of
Mawhin's coincidence degree which are useful for our applications.
We refer to \cite{GaMa-1977, Ma-1972, Ma-1979, Ma-1993} for more details, references and applications.
Then, in the second part of the section, we provide an existence result for the second order boundary value problem
\begin{equation}\label{BVP-sec2}
\begin{cases}
\, u'' + f(x,u,u') = 0, \quad 0 < x < T, \\
\, {\mathscr{B}}(u,u') = \underline{0},
\end{cases}
\end{equation}
which includes $({\mathscr{P}})$ as well as the case of more general nonlinear terms.

\subsection{Basic facts about Mawhin's coincidence degree}\label{section2.1}

Let $X$ and $Z$ be real Banach spaces and let $L \colon X \supseteq \text{\rm dom}\,L \to Z$ be a linear Fredholm mapping of index zero.
We denote by $\ker L = L^{-1}(0)$ the \textit{kernel} or null-space of $L$ and by $\text{\rm Im}\,L\subseteq Z$ the range or
\textit{image} of $L$. We fix a pair $(P,Q)$ of linear continuous projections with $P \colon X \to \ker L$ and
$Q \colon Z \to \text{\rm coker}\,L \subseteq Z$,
where $\text{\rm coker}\,L \cong Z/\text{\rm Im}\,L$ is the complementary subspace of $\text{\rm Im}\,L$ in $Z$.
The linear subspace $\ker P \subseteq X$ is the complementary subspace of $\ker L$ in $X$. Accordingly, we have
the decomposition in direct sums:
\begin{equation*}
X = \ker L \oplus \ker P, \qquad Z = \text{\rm Im}\,L \oplus \text{\rm Im}\,Q.
\end{equation*}
We denote by
\begin{equation*}
K_{P} \colon \text{\rm Im}\,L \to \text{\rm dom}\,L \cap \ker P
\end{equation*}
the right inverse of $L$, i.e.~$L K_{P}(w) = w$ for each $w\in \text{\rm Im}\,L$.
By the assumption that $L$ is a Fredholm mapping of index zero, we have that $\text{\rm Im}\,L$ is a closed subspace of $Z$ and
$\ker L$ and $\text{\rm coker}\,L$ are finite dimensional vector spaces of the same dimension.
We also fix an orientation on these spaces and take a linear
(orientation-preserving) isomorphism $J \colon \text{\rm coker}\,L \to \ker L$.

Let $N \colon X \to Z$ be possibly nonlinear operator and consider the \textit{coincidence equation}
\begin{equation}\label{eq-2.2}
Lu = Nu,\quad u\in \text{\rm dom}\,L.
\end{equation}
According to \cite{Ma-1979}, equation \eqref{eq-2.2} is equivalent to the fixed point problem
\begin{equation}\label{eq-2.3}
u = \Phi(u):= Pu + JQNu + K_{P}(Id-Q)Nu, \quad u\in X.
\end{equation}
Mawhin's coincidence degree theory allows to apply Leray-Schauder degree to the operator equation \eqref{eq-2.3},
thus providing a way to solve equation \eqref{eq-2.2} when $L$ is not invertible. To this aim,
we add some structural assumptions on $N$, in order to have a completely continuous right-hand side in \eqref{eq-2.3}.
More precisely, we suppose that the operator $N$ is \textit{$L$-completely continuous}, namely $N$ is continuous and,
for each bounded set $B\subseteq X$, it follows that $QN(B)$ and $K_{P}(Id-Q)N(B)$ are relatively compact sets. A typical situation in
which the $L$-complete continuity of $N$ is satisfied occurs when $N$ is continuous, maps bounded sets to bounded sets
and $K_{P}$ is a compact linear operator.

Let $\Omega\subseteq X$ be an open and bounded set such that
\begin{equation*}
Lu \neq Nu, \quad \forall \, u\in \text{\rm dom}\,L \cap \partial\Omega.
\end{equation*}
In this case, the \textit{coincidence degree of $L$ and $N$ in $\Omega$} is defined as
\begin{equation*}
D_{L}(L-N,\Omega):= \text{deg}(Id-\Phi,\Omega,0),
\end{equation*}
where ``\text{deg}'' denotes the Leray-Schauder degree. In the sequel we also denote
by ``$d_{B}$'' the (finite dimensional) Brouwer degree.
A remarkable result from coincidence degree theory guarantees that $D_{L}$ is independent on the choice of the projectors $P$ and $Q$.
Moreover, it is also independent of the choice of the linear isomorphism $J$, provided that we have fixed
an orientation on $\ker L$ and $\text{\rm coker}\,L$ and considered for $J$ only orientation-preserving isomorphisms.
This generalized degree has all the usual properties of Brouwer and Leray-Schauder degree, like additivity/excision and
homotopic invariance. In particular, equation \eqref{eq-2.2} has at least one solution in $\Omega$ if $D_{L}(L-N,\Omega)\neq0$.

The following result is of crucial importance in order to compute the coincidence degree.
It relates the coincidence degree to the finite dimensional Brouwer degree of the operator $N$ projected
into $\ker L$. It was given in \cite{Ma-1972} in its abstract form and, previously, in \cite{Ma-1969} in the context of periodic problems
for ODEs.

\begin{lemma}[Mawhin, 1969-1972]\label{lemma_Mawhin}
Let $L$ and $N$ be as above and let $\Omega\subseteq X$ be an open and bounded set.
Suppose that
\begin{equation*}
Lu \neq \vartheta Nu, \quad \forall \, u\in \text{\rm dom}\,L \cap \partial\Omega, \; \forall \, \vartheta\in\mathopen{]}0,1\mathclose{]},
\end{equation*}
and
\begin{equation*}
QN(u)\neq0, \quad \forall\, u\in \partial\Omega \cap \ker L.
\end{equation*}
Then
\begin{equation*}
D_{L}(L-N,\Omega) = d_{B}(-JQN|_{\ker L},\Omega \cap \ker L,0).
\end{equation*}
\end{lemma}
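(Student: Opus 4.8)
The plan is to route everything through the Leray--Schauder reformulation recorded in \eqref{eq-2.3}, so that by definition $D_{L}(L-N,\Omega)=\text{deg}(Id-\Phi,\Omega,0)$, and then to deform $\Phi$ to a map whose nontrivial part lives entirely in the finite-dimensional space $\ker L$. To this end I would introduce the one-parameter family
\begin{equation*}
M(\vartheta,u) := Pu + JQNu + \vartheta\, K_{P}(Id-Q)Nu, \qquad \vartheta\in\mathopen{[}0,1\mathclose{]},
\end{equation*}
which is a completely continuous homotopy since $N$ is $L$-completely continuous while $P$ and $JQ$ are continuous of finite rank. At $\vartheta=1$ one has $M(1,\cdot)=\Phi$, hence $\text{deg}(Id-M(1,\cdot),\Omega,0)=D_{L}(L-N,\Omega)$; the point of keeping the term $JQNu$ at full strength (only the range term carries $\vartheta$) is that it survives at $\vartheta=0$ and prevents the zero set from degenerating.

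The central step, and the place where both hypotheses are consumed, is the admissibility of $M$, i.e.\ that $u\neq M(\vartheta,u)$ for all $u\in\partial\Omega$ and all $\vartheta\in\mathopen{[}0,1\mathclose{]}$. Suppose $u=M(\vartheta,u)$; note that such a $u$ automatically belongs to $\text{\rm dom}\,L$, being a sum of elements of $\ker L$ and of $\text{\rm dom}\,L\cap\ker P$. Applying $P$ and using $P|_{\ker L}=Id$, $JQNu\in\ker L$ and $K_{P}(Id-Q)Nu\in\ker P$, every term cancels except $JQNu$, forcing $JQNu=0$, hence $QNu=0$ since $J$ is an isomorphism. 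Then $(Id-Q)Nu=Nu\in\text{\rm Im}\,L$, and applying $L$ (recall $LK_{P}=Id$ on $\text{\rm Im}\,L$ and $L$ annihilates $\ker L$) yields
\begin{equation*}
Lu = \vartheta\, Nu .
\end{equation*}
For $\vartheta\in\mathopen{]}0,1\mathclose{]}$ this contradicts the first assumption on $\text{\rm dom}\,L\cap\partial\Omega$; for $\vartheta=0$ it gives $u\in\partial\Omega\cap\ker L$ together with $QNu=0$, contradicting the second assumption. I expect this projection bookkeeping to be the main obstacle: it is elementary, but it is precisely what converts the two analytic hypotheses into the absence of fixed points on the whole cylinder $\partial\Omega\times\mathopen{[}0,1\mathclose{]}$.

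Granted admissibility, homotopy invariance of the Leray--Schauder degree gives
\begin{equation*}
D_{L}(L-N,\Omega) = \text{deg}\bigl(Id-(P+JQN),\Omega,0\bigr).
\end{equation*}
Now $P+JQN$ is compact with range contained in the finite-dimensional subspace $\ker L$, so I would invoke the reduction property of the Leray--Schauder degree, which identifies the above with the Brouwer degree of the restriction of $Id-(P+JQN)$ to $\Omega\cap\ker L$. On $\ker L$ one has $Pv=v$, so that restriction is exactly $v\mapsto -JQNv$, and therefore
\begin{equation*}
D_{L}(L-N,\Omega) = d_{B}\bigl(-JQN|_{\ker L},\,\Omega\cap\ker L,\,0\bigr),
\end{equation*}
as claimed. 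The only residual care needed is in the reduction step, namely checking that the compact perturbation genuinely takes values in $\ker L$ so that no solution lying outside $\ker L$ is lost.
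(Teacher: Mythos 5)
Your proposal is correct and follows essentially the same route as the paper: the same homotopy $\Phi_{\vartheta}u = Pu + JQNu + \vartheta K_{P}(Id-Q)Nu$, homotopy invariance of the Leray--Schauder degree, and the reduction property to pass to the Brouwer degree on $\Omega\cap\ker L$. The only difference is that you write out the projection bookkeeping establishing admissibility on $\partial\Omega\times\mathopen{[}0,1\mathclose{]}$, which the paper's sketch delegates to the reference \cite{Ma-1979}; your verification of that step is accurate.
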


\begin{proof}
We give only a sketch of the proof for the reader's convenience. For the missing details, see~\cite{Ma-1979}.
Consider the operator $\Phi_{\vartheta}$ defined as
\begin{equation*}
\Phi_{\vartheta}(u):= Pu + JQNu + \vartheta K_{P}(Id-Q)Nu, \quad \text{for } \; \vartheta\in\mathopen{[}0,1\mathclose{]},
\end{equation*}
and observe that $\Phi_{1} = \Phi$ and $\Phi_{0}$ has finite dimensional range in $\ker L$.
The assumptions of the lemma imply that $u\neq\Phi_{\vartheta}u$, for all $u\in \partial\Omega$
and $\vartheta\in \mathopen{[}0,1\mathclose{]}$.
The homotopic invariance and the reduction property of the Leray-Schauder degree then give
\begin{equation*}
\begin{aligned}
D_{L}(L-N,\Omega) &= \text{deg}(Id-\Phi_{1},\Omega,0) = \text{deg}(Id-\Phi_{0},\Omega,0)\\
&= d_{B}(-JQN|_{\ker L},\Omega \cap \ker L,0).
\end{aligned}
\end{equation*}
Hence the result is proved. See~\cite{Ma-2008} for an interesting discussion on the reduction formula in the context of coincidence degree.
\end{proof}

\medskip

A typical degree theoretic approach in order to prove the existence of nontrivial solutions consists into showing that
the degree changes from small balls $B(0,r)$ to large balls $B(0,R)$, so that the additivity/excision property of the
degree ensures the existence of a solution in $B(0,R)\setminus B[0,r]$. From this point of view,
results ensuring that the degree is zero on some domains may be useful for the applications. In this context we
present the next result which is a simple adaptation to our setting of a well know lemma (cf.~\cite{Nu-1973}).

\begin{lemma}\label{lemma-2.2}
Let $L$ and $N$ be as above and let $\Omega\subseteq X$ be an open and bounded set.
Suppose that $v\neq0$ is a vector such that
\begin{equation*}
Lu \neq Nu + \alpha v, \quad \forall \, u\in \text{\rm dom}\,L \cap \partial\Omega, \; \forall\, \alpha \geq 0.
\end{equation*}
Then
\begin{equation*}
D_{L}(L-N,\Omega) = 0.
\end{equation*}
\end{lemma}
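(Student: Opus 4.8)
The plan is to realize the fixed direction $v$ as an admissible homotopy parameter and then to push the coincidence equation out of the range of the bounded set $\Omega$ by increasing its magnitude. First I would observe that, since $v$ is a fixed vector, the operator $N_{\alpha}:=N+\alpha v$ is $L$-completely continuous for every $\alpha\geq0$, and that the family $(\alpha,u)\mapsto Nu+\alpha v$ provides an admissible homotopy for the coincidence degree on any compact interval $\alpha\in\mathopen{[}0,\alpha_{0}\mathclose{]}$: the standing hypothesis $Lu\neq Nu+\alpha v$ for all $u\in\text{\rm dom}\,L\cap\partial\Omega$ and all $\alpha\geq0$ guarantees that no coincidence point lies on $\partial\Omega$ throughout the deformation. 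Hence, by the homotopic invariance of $D_{L}$,
\begin{equation*}
D_{L}(L-N,\Omega) = D_{L}(L-N-\alpha v,\Omega), \qquad \forall\,\alpha\geq0.
\end{equation*}

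Next I would pass to the equivalent fixed point formulation \eqref{eq-2.3}. Writing $\Phi_{\alpha}(u):=Pu+JQ(Nu+\alpha v)+K_{P}(Id-Q)(Nu+\alpha v)$ and using linearity, one gets $\Phi_{\alpha}=\Phi+\alpha w$ with
\begin{equation*}
w:=JQv+K_{P}(Id-Q)v,
\end{equation*}
so that $D_{L}(L-N-\alpha v,\Omega)=\text{deg}(Id-\Phi_{\alpha},\Omega,0)$. The decisive algebraic point — the step I expect to require the most care — is that $w\neq0$. Indeed $JQv\in\ker L$ and $K_{P}(Id-Q)v\in\ker P$ lie in the complementary summands of $X=\ker L\oplus\ker P$, so $w=0$ would force each term to vanish separately; since $J$ is an isomorphism and $K_{P}$ is injective (being a right inverse of $L$), this means $Qv=0$ and $(Id-Q)v=0$, i.e.~$v\in\text{\rm Im}\,L\cap\text{\rm Im}\,Q=\{0\}$ by the splitting $Z=\text{\rm Im}\,L\oplus\text{\rm Im}\,Q$, contradicting $v\neq0$. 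This is precisely where the hypothesis $v\neq0$ enters.

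Finally I would invoke boundedness to annihilate the degree for large $\alpha$. Since $\Omega$ is bounded and $\Phi(\overline{\Omega})$ is relatively compact, there is a constant $C$ with $\|u-\Phi(u)\|\leq C$ for every $u\in\overline{\Omega}$; but a solution $u=\Phi_{\alpha}(u)=\Phi(u)+\alpha w$ would force $\|u-\Phi(u)\|=\alpha\|w\|$, which is impossible once $\alpha>C/\|w\|$ because $w\neq0$. Thus, for such $\alpha$, the equation $Id-\Phi_{\alpha}=0$ has no solution in the whole of $\overline{\Omega}$, whence $\text{deg}(Id-\Phi_{\alpha},\Omega,0)=0$ by the existence property of the Leray--Schauder degree, and therefore $D_{L}(L-N-\alpha v,\Omega)=0$. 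Combining this with the homotopy identity from the first step yields $D_{L}(L-N,\Omega)=0$, as claimed. The only genuine subtlety beyond routine bookkeeping is the verification that $w\neq0$; once that is in place, the argument is the standard Nussbaum-type coercivity trick transported to the coincidence-degree setting.
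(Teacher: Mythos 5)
Your proposal is correct and follows essentially the same route as the paper's proof: the same passage to the fixed point formulation with the vector $w = JQv + K_{P}(Id-Q)v$ (the paper's $v^{*}$), the same direct-sum argument showing $w \neq 0$, the same compactness bound $\sup_{u\in\overline{\Omega}}\|u-\Phi u\|<\infty$ forcing nonexistence of solutions for large $\alpha$, and the same homotopy in $\alpha$ to transfer the vanishing of the degree back to $\alpha=0$. The only difference is organizational (you invoke homotopy invariance for all $\alpha\geq 0$ before killing the degree at large $\alpha$, while the paper fixes $\alpha_{0}$ first), which is immaterial.
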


\begin{proof}
First of all, we observe that $u\in \text{\rm dom}\, L$ is a solution of the equation
$L u = N u + \alpha v$ if and only if $u\in X$ is a solution of
\begin{equation}\label{eq-2.6}
u= \Phi u + \alpha v^{*}, \quad \text{with }\; v^{*}:= JQ v + K_{P}(Id-Q)v.
\end{equation}
We claim that $v^{*} \neq 0$. Indeed, if $v^{*} = 0$, then $Q v = 0$ and also $K_{P} v = 0$. Hence, $v\in \text{\rm Im}\,L$
and therefore $v = LK_{P} v = 0$, a contradiction. Thus the claim is proved.

Since $\Phi$ is compact on the bounded set $\overline{\Omega}$, we have that
\begin{equation*}
M:= \sup_{u\in \overline{\Omega}} \|u-\Phi u\| < \infty.
\end{equation*}
We conclude that, if we fix any number
\begin{equation*}
\alpha_{0} > \dfrac{M}{\|v^{*}\|},
\end{equation*}
then \eqref{eq-2.6} has no solutions on $\overline{\Omega}$ for all $\alpha =\alpha_{0}$
(furthermore, there are no solutions also for $\alpha \geq \alpha_{0}$).

By the homotopic invariance of the coincidence degree (using $\alpha\in \mathopen{[}0,\alpha_{0}\mathclose{]}$ as a parameter), we find
\begin{equation*}
D_{L}(L-N,\Omega) = \text{deg}(Id-\Phi,\Omega,0) = \text{deg}(Id-\Phi - \alpha_{0} v^{*},\Omega,0) = 0.
\end{equation*}
Hence the result is proved.
\end{proof}

{} From the proof of Lemma~\ref{lemma-2.2} it is clear that the following variant holds.

\begin{lemma}\label{lemma-2.3}
Let $L$ and $N$ be as above and let $\Omega\subseteq X$ be an open and bounded set.
Suppose that there exist a vector $v\neq0$ and a constant $\alpha_{0} > 0$ such that
\begin{equation*}
Lu \neq Nu + \alpha v,
\quad \forall \, u\in \text{\rm dom}\,L \cap \partial\Omega, \; \forall\, \alpha\in \mathopen{[}0,\alpha_{0}\mathclose{]},
\end{equation*}
and
\begin{equation*}
Lu \neq Nu + \alpha_{0} v, \quad \forall \, u\in \text{\rm dom}\,L \cap \Omega.
\end{equation*}
Then
\begin{equation*}
D_{L}(L-N,\Omega) = 0.
\end{equation*}
\end{lemma}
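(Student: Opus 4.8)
The plan is to mirror the proof of Lemma~\ref{lemma-2.2} almost line for line; the only structural difference is that here the shift $\alpha_{0}$ is prescribed rather than chosen large via an \emph{a priori} bound, so the vanishing of the degree must come from the second hypothesis instead of from an emptiness argument at large $\alpha$. First I would recast the perturbed coincidence equation in fixed point form: exactly as in \eqref{eq-2.6}, a point $u\in\mathrm{dom}\,L$ solves $Lu = Nu + \alpha v$ if and only if $u = \Phi u + \alpha v^{*}$, where $v^{*}:= JQv + K_{P}(Id-Q)v$. The same computation as in Lemma~\ref{lemma-2.2} shows $v^{*}\neq 0$: if $v^{*}=0$ then both $Qv=0$ and $K_{P}v=0$, whence $v\in\mathrm{Im}\,L$ and $v = LK_{P}v = 0$, contradicting $v\neq 0$.

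Next I would translate the two hypotheses into this language. The first one says precisely that
\[
u \neq \Phi u + \alpha v^{*}, \qquad \forall\, u\in\partial\Omega,\ \forall\, \alpha\in\mathopen{[}0,\alpha_{0}\mathclose{]},
\]
so the homotopy $(u,\alpha)\mapsto \Phi u + \alpha v^{*}$ is admissible on $\partial\Omega$ throughout $\alpha\in\mathopen{[}0,\alpha_{0}\mathclose{]}$. Since $\Phi$ is compact on $\overline{\Omega}$ and $\alpha v^{*}$ ranges in a one-dimensional subspace, each map $Id-\Phi-\alpha v^{*}$ is a compact perturbation of the identity, so its Leray--Schauder degree is defined and invariant along the homotopy. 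Hence
\[
D_{L}(L-N,\Omega) = \mathrm{deg}(Id-\Phi,\Omega,0) = \mathrm{deg}(Id-\Phi-\alpha_{0}v^{*},\Omega,0).
\]

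Finally I would use the second hypothesis. Through the same reformulation it asserts that $u\neq \Phi u + \alpha_{0}v^{*}$ for every $u\in\Omega$; combined with the first hypothesis evaluated at $\alpha=\alpha_{0}$, which excludes solutions on $\partial\Omega$, this yields that $Id-\Phi-\alpha_{0}v^{*}$ has no zero in all of $\overline{\Omega}$. By the existence (solution) property of the degree, $\mathrm{deg}(Id-\Phi-\alpha_{0}v^{*},\Omega,0)=0$, and chaining the two displayed equalities gives $D_{L}(L-N,\Omega)=0$.

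I do not expect a genuine obstacle: the argument is a direct rerun of Lemma~\ref{lemma-2.2}, with the explicit bound $M/\|v^{*}\|$ replaced by the hypothesis itself. The only point deserving care is the bookkeeping of the two hypotheses at the level $\alpha=\alpha_{0}$ — the boundary case being supplied by the first hypothesis and the interior case by the second — so that together they cover $\overline{\Omega}$ and license the passage from ``no zeros'' to ``degree zero.''
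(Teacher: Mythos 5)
Your proof is correct and is essentially the argument the paper intends: the paper states Lemma~\ref{lemma-2.3} as an immediate variant of Lemma~\ref{lemma-2.2}, and your rerun of that proof — fixed point reformulation with $v^{*}=JQv+K_{P}(Id-Q)v\neq 0$, homotopy invariance over $\alpha\in\mathopen{[}0,\alpha_{0}\mathclose{]}$ justified by the first hypothesis, and vanishing of the degree at $\alpha=\alpha_{0}$ from the absence of zeros in $\overline{\Omega}$ supplied by the two hypotheses together — is exactly the intended adaptation, with the hypothesis replacing the a priori bound $\alpha_{0}>M/\|v^{*}\|$.
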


\subsection{An existence result for problem \eqref{BVP-sec2}}\label{section2.2}

Throughout this section, by ${\mathscr{B}}(u,u') = \underline{0}$ we mean the Neumann or the periodic boundary conditions
on a fixed interval $\mathopen{[}0,T\mathclose{]}$.

Let $X:= \mathcal{C}^{1}(\mathopen{[}0,T\mathclose{]})$ be the Banach space of continuously differentiable real valued functions $u(x)$
defined on $\mathopen{[}0,T\mathclose{]}$ endowed with the norm
\begin{equation*}
\|u\|:= \|u\|_{\infty} + \|u'\|_{\infty}
\end{equation*}
and let $Z:= L^{1}(\mathopen{[}0,T\mathclose{]})$ be the space of Lebesgue integrable functions
defined on $\mathopen{[}0,T\mathclose{]}$ with the $L^{1}$-norm (denoted by $\|\cdot\|_{L^{1}}$).

We define $L\colon \text{\rm dom}\,L \to Z$ as
\begin{equation*}
(Lu)(x) := -u''(x),\quad x\in \mathopen{[}0,T\mathclose{]},
\end{equation*}
and take as $\text{\rm dom}\,L \subseteq X$ the vector subspace
\begin{equation*}
\text{\rm dom}\,L := \Bigl{\{} u\in X \colon u' \in \text{AC} \text{ and } {\mathscr{B}}(u,u') = \underline{0} \Bigr{\}},
\end{equation*}
where $u' \in \text{AC}$ means that $u'$ is absolutely continuous.
In this case, $\ker L \equiv {\mathbb{R}}$ is made by the constant functions and
\begin{equation*}
\text{\rm Im}\,L = \biggl\{w\in Z \colon \int_{0}^{T} w(x)~\!dx=0 \biggr\}.
\end{equation*}
A natural choice of the projections is given by
\begin{equation*}
P, Q \colon u \mapsto \dfrac{1}{T}\int_{0}^{T}u(x)~\!dx,
\end{equation*}
so that $\text{\rm coker}\,L \equiv {\mathbb{R}}$ and $\ker P$
is given by the continuously differentiable functions with mean value zero.
With such a choice of the projection, the right inverse linear operator $K_{P}$ is the map which,
to any $w\in L^{1}(\mathopen{[}0,T\mathclose{]})$ with $\int_{0}^{T} w = 0$, associates the unique solution $u(x)$ of
\begin{equation*}
u'' + w(x)= 0, \quad {\mathscr{B}}(u,u') = \underline{0}, \quad \int_{0}^{T} u(x) ~\!dx = 0.
\end{equation*}
Finally, we take as a linear isomorphism $J \colon \text{\rm coker}\,L \to \ker L$ the identity in ${\mathbb{R}}$.

We are ready now to introduce the nonlinear operator $N \colon X \to Z$. First we give some
assumptions on $f(x,s,\xi)$ which will be considered throughout the section.

Let $f\colon\mathopen{[}0,T\mathclose{]}\times {\mathbb{R}}^{+} \times {\mathbb{R}} \to {\mathbb{R}}$
be a $L^{p}$-Carath\'{e}odory function, for some $1\leq p \leq \infty$ (cf.~\cite{Ha-1980}), satisfying the following conditions
\begin{itemize}
\item[$(f_{1})$] \textit{$f(x,0,\xi) = 0$, for a.e.~$x\in \mathopen{[}0,T\mathclose{]}$ and for all $\xi\in {\mathbb{R}}$;}
\item[$(f_{2})$] \textit{there exists a non-negative function $k\in L^{1}(\mathopen{[}0,T\mathclose{]})$ and a constant $\rho > 0$ such that
\begin{equation*}
|f(x,s,\xi)| \leq k(x)(|s| + |\xi|),
\end{equation*}
for a.e.~$x\in \mathopen{[}0,T\mathclose{]}$, for all $0\leq s \leq \rho$ and $|\xi| \leq \rho$.}
\end{itemize}
Besides the above hypotheses, we suppose also that $f(x,s,\xi)$ satisfies a Bernstein-Nagumo type condition
in order to have a priori bounds on $|u'(x)|$ whenever bounds on $u(x)$ are obtained.
Typically, Bernstein-Nagumo assumptions are expressed in terms of growth restrictions on $f(x,s,\xi)$ with respect to the $\xi$-variable.
However, depending on the given boundary value problems and on the nonlinearity, more general conditions can be considered, too.
The interested reader can find in \cite{Ma-1974} a very general discussion for the periodic problem
(cf.~\cite{Za-1987} for a broad list of references). See also \cite{LoSc-2012} and \cite{Ma-1981} for interesting remarks
and applications to different boundary value problems.
For the purposes of the present paper, we do not consider the more general situation
and we confine ourselves to the classical estimate for the $L^{p}$-Carath\'{e}odory setting given in \cite[\S~4.4]{DCHa-2006}.
Accordingly, we assume that
\begin{itemize}
\item[$(f_{3})$] \textit{
for each $\eta > 0$ there exists a continuous function
\begin{equation*}
\phi = \phi_{\eta} \colon {\mathbb{R}}^{+} \to {\mathbb{R}}^{+}, \quad \text{with } \;
\displaystyle{\int^{\infty} \dfrac{\xi^{\frac{p-1}{p}}}{\phi(\xi)}~\!d\xi = \infty},
\end{equation*}
and a function $\psi = \psi_{\eta}\in L^{p}(\mathopen{[}0,T\mathclose{]},{\mathbb{R}}^{+})$ such that
\begin{equation*}
|f(x,s,\xi)| \leq \psi(x)\phi(|\xi|), \quad \text{for a.e. } x\in \mathopen{[}0,T\mathclose{]},
\; \forall \, s \in \mathopen{[}0,\eta\mathclose{]},
\; \forall \, \xi\in {\mathbb{R}}.
\end{equation*}}
\end{itemize}
For technical reasons, when dealing with Nagumo functions $\phi(\xi)$ as above, we always assume further that
\begin{equation*}
\liminf_{\xi\to +\infty} \phi(\xi) > 0.
\end{equation*}
This prevents the possibility of pathological examples like that in \cite[p.~46--47]{DCHa-2006} and does not affect our applications.

\medskip

As a first step we extend $f$ to a Carath\'{e}odory function $\tilde{f}$ defined on $\mathopen{[}0,T\mathclose{]}\times {\mathbb{R}}^{2}$,  by setting
\begin{equation*}
\tilde{f}(x,s,\xi):=
\begin{cases}
\, f(x,s,\xi), & \text{if } s \geq 0;\\
\, - s,      & \text{if } s \leq 0;
\end{cases}
\end{equation*}
and denote by
$N \colon X \to Z$ the Nemytskii operator induced by $\tilde{f}$, that is
\begin{equation*}
(Nu)(x):= \tilde{f}(x,u(x),u'(x)), \quad x\in \mathopen{[}0,T\mathclose{]}.
\end{equation*}
In this setting, $u$ is a solution of the coincidence equation
\begin{equation}\label{eq-2.7}
Lu = Nu, \quad u\in \text{\rm dom}\,L,
\end{equation}
if and only if it is a solution to the boundary value problem
\begin{equation}\label{eq-2.8}
\begin{cases}
\, u'' + \tilde{f}(x,u,u') = 0, \quad 0 < x < T, \\
\, {\mathscr{B}}(u,u') = \underline{0}.
\end{cases}
\end{equation}
Moreover, from the definition of $\tilde{f}$ for $s \leq 0$ and conditions $(f_{1})$ and $(f_{2})$, one can easily check
by a maximum principle argument (see Lemma~\ref{Maximum principle} and Remark~\ref{rem-6.1})
that if $u \not\equiv 0$, then $u(x)$ is strictly positive and hence a
(positive) solution of problem~\eqref{BVP-sec2}.

Now, as an application of Lemma~\ref{lemma_Mawhin} and Lemma~\ref{lemma-2.3}, we have the following result.

\begin{theorem}\label{th-2.1}
Assume $(f_{1})$, $(f_{2})$, $(f_{3})$ and suppose that there exist two constants $r, R > 0$, with $r\neq R$,
such that the following hypotheses hold.
\begin{itemize}
\item [$(H_{r})$]
The average condition
\begin{equation*}
\int_{0}^{T} f(x,r,0)~\!dx < 0
\end{equation*}
is satisfied.
Moreover, any solution $u(x)$ of the boundary value problem
\begin{equation}\label{eq-2.9}
\begin{cases}
\, u'' + \vartheta {f}(x,u,u') = 0 \\
\, {\mathscr{B}}(u,u') = \underline{0},
\end{cases}
\end{equation}
for $0 < \vartheta \leq 1$, such that $u(x) > 0$ on $\mathopen{[}0,T\mathclose{]}$,
satisfies $\|u\|_{\infty} \neq r$.
\item [$(H_{R})$]
There exist a non-negative function $v\in L^{p}(\mathopen{[}0,T\mathclose{]})$
with $v\not\equiv 0$ and a constant $\alpha_{0} > 0$, such that
every solution $u(x)\geq0$ of the boundary value problem
\begin{equation}\label{eq-2.10}
\begin{cases}
\, u'' + {f}(x,u,u') + \alpha v(x)= 0 \\
\, {\mathscr{B}}(u,u') = \underline{0},
\end{cases}
\end{equation}
for $\alpha \in \mathopen{[}0,\alpha_{0}\mathclose{]}$,
satisfies $\|u\|_{\infty} \neq R$. Moreover,
there are no solutions $u(x)$ of \eqref{eq-2.10} for $\alpha = \alpha_{0}$ with $0 \leq u(x) \leq R$,
for all $x\in \mathopen{[}0,T\mathclose{]}$.
\end{itemize}
Then problem \eqref{BVP-sec2} has at least a positive solution $u(x)$ with
\begin{equation*}
\min\{r,R\} < \max_{x\in \mathopen{[}0,T\mathclose{]}} u(x) < \max\{r,R\}.
\end{equation*}
\end{theorem}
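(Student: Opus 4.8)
The plan is to cast \eqref{BVP-sec2} as the coincidence equation $Lu=Nu$ in the framework set up above and to show that the coincidence degree jumps from $1$ on a small neighbourhood of the origin to $0$ on a larger one; the additivity/excision property then produces a nontrivial coincidence point in between, which the maximum principle (Lemma~\ref{Maximum principle} and Remark~\ref{rem-6.1}) turns into a positive solution. I may assume $r<R$, the case $r>R$ being symmetric and giving the same localization. First I fix the domains. By the Nagumo condition $(f_{3})$ --- and the standing assumption $\liminf_{\xi\to+\infty}\phi(\xi)>0$, which allows one to absorb the bounded additive term $\alpha v$, $\alpha\in\mathopen{[}0,\alpha_{0}\mathclose{]}$, into the Nagumo function while keeping its integral divergent --- there is a constant $M=M(R)>0$ bounding $\|u'\|_{\infty}$ for every solution of $u''+\tilde{f}(x,u,u')+\alpha v=0$, ${\mathscr{B}}(u,u')=\underline{0}$, with $\|u\|_{\infty}\leq R$ and $\alpha\in\mathopen{[}0,\alpha_{0}\mathclose{]}$. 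I set
\[
\Omega_{\rho}:=\bigl\{u\in X:\ \|u\|_{\infty}<\rho,\ \|u'\|_{\infty}<\eta_{\rho}\bigr\},
\]
with $M<\eta_{r}<\eta_{R}$, so that $\overline{\Omega_{r}}\subset\Omega_{R}$ and the bound $M$ keeps solutions off the ``derivative face'' $\{\|u'\|_{\infty}=\eta_{\rho}\}$ of each boundary. Here $N$ is $L$-completely continuous (as $\tilde{f}$ is $L^{p}$-Carath\'{e}odory and $K_{P}$ is compact), so $D_{L}$ is well defined.

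To obtain $D_{L}(L-N,\Omega_{r})=1$ I would apply Lemma~\ref{lemma_Mawhin}. For the homotopy hypothesis $Lu\neq\vartheta Nu$ on $\text{\rm dom}\,L\cap\partial\Omega_{r}$, $\vartheta\in\mathopen{]}0,1\mathclose{]}$: by the maximum principle such a $u$ is either $\equiv0$ (excluded on $\partial\Omega_{r}$ since $r>0$) or positive, in which case $\tilde{f}=f$ and $u$ solves \eqref{eq-2.9}; then $(H_{r})$ gives $\|u\|_{\infty}\neq r$, and since $u\in\overline{\Omega_{r}}$ forces $\|u\|_{\infty}\leq r$ we get $\|u\|_{\infty}<r$, so $\|u'\|_{\infty}\leq M<\eta_{r}$ and $u\notin\partial\Omega_{r}$. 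The transversality on $\ker L$ holds because on $\partial\Omega_{r}\cap\ker L=\{\pm r\}$ one has $QN(r)=\frac{1}{T}\int_{0}^{T}f(x,r,0)\,dx<0$ (the average condition) and $QN(-r)=\frac{1}{T}\int_{0}^{T}\bigl(-(-r)\bigr)\,dx=r>0$ (from the definition of $\tilde{f}$ for $s\leq0$). Since $J=\text{\rm Id}$ and the scalar map $c\mapsto-QN(c)$ on $(-r,r)$ is negative at $-r$ and positive at $r$, Lemma~\ref{lemma_Mawhin} yields
\[
D_{L}(L-N,\Omega_{r})=d_{B}\bigl(-JQN|_{\ker L},(-r,r),0\bigr)=1.
\]

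To obtain $D_{L}(L-N,\Omega_{R})=0$ I would apply Lemma~\ref{lemma-2.3} with the $v$ and $\alpha_{0}$ supplied by $(H_{R})$. A solution of $Lu=Nu+\alpha v$ solves $u''+\tilde{f}(x,u,u')+\alpha v=0$; as $v\geq0$, the same maximum principle argument gives $u\geq0$, hence $\tilde{f}=f$ and $u$ solves \eqref{eq-2.10}. For $\alpha\in\mathopen{[}0,\alpha_{0}\mathclose{]}$ and $u\in\partial\Omega_{R}$, condition $(H_{R})$ gives $\|u\|_{\infty}\neq R$, so $\|u\|_{\infty}<R$ and $\|u'\|_{\infty}\leq M<\eta_{R}$, whence $u\notin\partial\Omega_{R}$; for $\alpha=\alpha_{0}$ any $u\in\Omega_{R}$ would satisfy $0\leq u(x)\leq R$, contradicting the second part of $(H_{R})$. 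Both hypotheses of Lemma~\ref{lemma-2.3} thus hold and $D_{L}(L-N,\Omega_{R})=0$. Since there are no coincidence points on $\partial\Omega_{r}$ (take $\vartheta=1$) nor on $\partial\Omega_{R}$ (take $\alpha=0$), excision gives $D_{L}(L-N,\Omega_{R}\setminus\overline{\Omega_{r}})=0-1=-1\neq0$, so there is $u\in\Omega_{R}\setminus\overline{\Omega_{r}}$ with $Lu=Nu$. As $0\in\Omega_{r}$, $u\not\equiv0$, and the maximum principle makes it a positive solution of \eqref{BVP-sec2}; finally $\|u\|_{\infty}<R$, while $u\notin\overline{\Omega_{r}}$ together with $\|u'\|_{\infty}\leq M<\eta_{r}$ forces $\|u\|_{\infty}>r$, which is the asserted localization $r<\max_{x\in\mathopen{[}0,T\mathclose{]}}u(x)<R$.

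The most delicate point is the construction of the sets $\Omega_{\rho}$: the hypotheses $(H_{r})$ and $(H_{R})$ control only the sup-norm, whereas the degree must be computed on $\mathcal{C}^{1}$-balls, so everything hinges on the uniform a priori bound on $\|u'\|_{\infty}$ coming from $(f_{3})$ and, crucially, on its stability under the additive perturbation $\alpha v$. This is precisely what keeps coincidence points off the ``derivative faces'' of $\partial\Omega_{r}$ and $\partial\Omega_{R}$ and what guarantees the strict localization $r<\|u\|_{\infty}<R$. The positivity via the maximum principle is the other essential ingredient, but it is delivered by the cited lemma.
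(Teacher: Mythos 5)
Your proposal is correct and follows essentially the same route as the paper: the extension $\tilde{f}$, positivity via the maximum principle, Nagumo a priori bounds on $\|u'\|_{\infty}$ to build the $\mathcal{C}^{1}$-open sets, degree $1$ on the small set via Lemma~\ref{lemma_Mawhin} (with the same sign computation $f^{\#}(-r)<0<f^{\#}(r)$ on $\ker L$), degree $0$ on the large set via Lemma~\ref{lemma-2.3}, and additivity/excision plus the strong maximum principle for the conclusion and the localization. The only detail to make explicit is that your single Nagumo constant $M$, defined for the $\alpha$-perturbed equation, also bounds $\|u'\|_{\infty}$ for solutions of the $\vartheta$-homotopy \eqref{eq-2.9} --- this is immediate since $|\vartheta\tilde{f}(x,s,\xi)|\leq|\tilde{f}(x,s,\xi)|$ yields the same Nagumo differential inequality uniformly in $\vartheta\in\mathopen{]}0,1\mathclose{]}$ --- whereas the paper handles this by taking two constants $M_{r}<M_{R}$ from two separate applications of Nagumo's lemma.
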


\begin{proof}
As we have already observed, from the choice of the spaces $X$, $\text{\rm dom}\, L$, $Z$ and the operators $L \colon u\mapsto -u''$ and $N$
(the Nemytskii operator induced by $\tilde{f}$), we have that \eqref{eq-2.7} is equivalent to the boundary value problem
\eqref{eq-2.8}. All the structural assumptions required by Mawhin's theory (that is $L$ is Fredholm of index zero and $N$ is
$L$-completely continuous) are satisfied by standard facts (see \cite{Ma-1979}).

For the proof, we confine ourselves to the case
\begin{equation*}
0 < r < R,
\end{equation*}
which is the interesting one for our applications.
The case in which $0 < R < r$ can be studied with minor changes in the proof
and it will be briefly described at the end.

The coincidence equation
\begin{equation}\label{eq-2.11}
Lu = \vartheta Nu, \quad u\in \text{\rm dom}\,L,
\end{equation}
is equivalent to
\begin{equation}\label{eq-2.12}
\begin{cases}
\, u'' + \vartheta \tilde{f}(x,u,u') = 0 \\
\, {\mathscr{B}}(u,u') = \underline{0}.
\end{cases}
\end{equation}
Let $u$ be any solution of \eqref{eq-2.11} for some $\vartheta > 0$. From the definition of $\tilde{f}$ for $s\leq 0$
and the maximum principle, we have that $u(x)\geq 0$ for every $x\in \mathopen{[}0,T\mathclose{]}$
and hence $u$ is a solution of \eqref{eq-2.9}. Moreover, by $(f_{2})$, if $u\not\equiv 0$, then $u(x) > 0$
for all $x\in \mathopen{[}0,T\mathclose{]}$. See also the Appendix.

According to condition $(f_{3})$, let $\phi = \phi_{r} \colon {\mathbb{R}}^{+} \to {\mathbb{R}}^{+}$
and $\psi = \psi_{r}\in L^{p}(\mathopen{[}0,T\mathclose{]})$
be such that $|f(x,s,\xi)|\leq \psi(x)\phi(|\xi|)$,
for a.e.~$x\in \mathopen{[}0,T\mathclose{]}$, for all $s\in \mathopen{[}0,r\mathclose{]}$ and $\xi\in {\mathbb{R}}$.
By Nagumo lemma (cf.~\cite[\S~4.4, Proposition~4.7]{DCHa-2006}), there exists a constant $M= M_{r} > 0$
(depending on $r$, as well as on $\phi$ and $\psi$,
but not depending on $u(x)$ and $\vartheta\in\mathopen{]}0,1\mathclose{]}$) such that any solution of \eqref{eq-2.12}
or, equivalently, any (non-negative) solution of \eqref{eq-2.9} (for some $\vartheta\in\mathopen{]}0,1\mathclose{]}$)
satisfying $\|u\|_{\infty} \leq r$ is such that $\|u'\|_{\infty} < M_{r}$.
Hence, condition $(H_{r})$ implies that, for the open and bounded set $\Omega_{r}$ in $X$ defined as
\begin{equation*}
\Omega_{r}:= \bigl{\{}u\in X \colon \|u\|_{\infty} < r, \; \|u'\|_{\infty} < M_{r}\bigr{\}},
\end{equation*}
it holds that
\begin{equation*}
Lu \neq \vartheta Nu,
\quad \forall\, u\in \text{\rm dom}\,L \cap \partial\Omega_{r}, \; \forall\, \vartheta\in \mathopen{]}0,1\mathclose{]}.
\end{equation*}
Consider now $u\in \partial\Omega_{r} \cap \ker L$. In this case, $u\equiv k\in {\mathbb{R}}$, with $|k| = r$, and
\begin{equation*}
-JQN u = - \dfrac{1}{T} \int_{0}^{T} \tilde{f}(x,k,0)~\!dx.
\end{equation*}
Notice also that $\Omega_{r}\cap \ker L = \mathopen{]}-r,r\mathclose{[}$.

By the definition of $\tilde{f}$ for $s\leq 0$, we have that
\begin{equation*}
f^{\#}(s):= - \dfrac{1}{T} \int_{0}^{T} \tilde{f}(x,s,0)~\!dx =
\begin{cases}
\, - \dfrac{1}{T} \displaystyle\int_{0}^{T} f(x,s,0)~\!dx, & \text{if } s >  0; \\
\, s,          & \text{if } s \leq 0.
\end{cases}
\end{equation*}
Therefore, $QNu\neq 0$ for each $u\in \partial\Omega_{r} \cap \ker L$ and, moreover,
\begin{equation*}
d_B(f^{\#}, \mathopen{]}-r,r\mathclose{[},0) =1,
\end{equation*}
since $f^{\#}(-r) < 0 < f^{\#}(r)$.
By Lemma~\ref{lemma_Mawhin} we conclude that
\begin{equation}\label{eq-deg1}
D_{L}(L-N,\Omega_{r}) = 1.
\end{equation}

Now we study the operator equation
\begin{equation}\label{eq-2.14}
Lu = Nu + \alpha v, \quad u\in \text{\rm dom}\,L,
\end{equation}
for some $\alpha \geq 0$, with  $v$ as in $(H_{R})$. This equation
is equivalent to
\begin{equation}\label{eq-2.15}
\begin{cases}
\, u'' +  \tilde{f}(x,u,u') + \alpha v(x) = 0\\
\, {\mathscr{B}}(u,u') = \underline{0}.
\end{cases}
\end{equation}
Let $u$ be any solution of \eqref{eq-2.14} for some $\alpha \geq 0$. From the definition of $\tilde{f}$ for $s\leq 0$
and the maximum principle, we have that $u(x)\geq 0$ for every $x\in \mathopen{[}0,T\mathclose{]}$ and hence $u$ is a solution of
\eqref{eq-2.10}.

According to condition $(f_{3})$, let $\phi = \phi_{R} \colon {\mathbb{R}}^{+} \to {\mathbb{R}}^{+}$
and $\psi = \psi_{R}\in L^{p}(\mathopen{[}0,T\mathclose{]})$ be such that $|f(x,s,\xi)|\leq \psi(x)\phi(|\xi|)$,
for a.e.~$x\in \mathopen{[}0,T\mathclose{]}$, for all $s\in \mathopen{[}0,R\mathclose{]}$ and $\xi\in {\mathbb{R}}$.
If we take $\alpha\in \mathopen{[}0,\alpha_{0}\mathclose{]}$, we obtain that
\begin{equation*}
|f(x,s,\xi) + \alpha v(x)| \leq \psi(x) \phi(|\xi|) + \alpha_{0} v(x) \leq \tilde{\psi}(x) \tilde{\phi}(|\xi|)
\end{equation*}
holds for a.e.~$x\in \mathopen{[}0,T\mathclose{]}$ and for all $s\in \mathopen{[}0,R\mathclose{]}$ and
$\xi\in {\mathbb{R}}$, with
\begin{equation*}
\tilde{\psi}(x):= \psi(x) + \alpha_{0} v(x)\quad \text{ and } \quad \tilde{\phi}(\xi):= \phi(\xi) + 1.
\end{equation*}
Observe also that $\tilde{\psi}\in L^{p}(\mathopen{[}0,T\mathclose{]})$ and $\int^{\infty} \xi^{(p-1)/p}/\tilde{\phi}(\xi)~\!d\xi = \infty$.

By Nagumo lemma, there exists a positive constant $M= M_{R} > M_{r}$ (depending on $R$, as well as on $\phi$ and $\tilde{\psi}$,
but not depending on $u(x)$ and $\alpha\in\mathopen{[}0,\alpha_{0}\mathclose{]}$) such that any solution of \eqref{eq-2.15}
or, equivalently, any (non-negative) solution of \eqref{eq-2.10} (for some $\alpha\in\mathopen{[}0,\alpha_{0}\mathclose{]}$)
satisfying $\|u\|_{\infty} \leq R$ is such that $\|u'\|_{\infty} < M_{R}$.
Hence, condition $(H_{R})$ implies that, for the open and bounded set $\Omega_{R}$ in $X$ defined as
\begin{equation*}
\Omega_{R}:= \bigl{\{} u\in X \colon \|u\|_{\infty} < R, \; \|u'\|_{\infty} < M_{R} \bigr{\}},
\end{equation*}
it holds that
\begin{equation*}
Lu \neq  Nu +\alpha v ,
\quad \forall \, u\in \text{\rm dom}\,L \cap \partial\Omega_{R}, \; \forall\, \alpha\in \mathopen{[}0,\alpha_{0}\mathclose{]}.
\end{equation*}
Moreover, the last hypothesis in $(H_{R})$ also implies that
\begin{equation*}
Lu \neq  Nu +\alpha_{0} v, \quad \forall \, u\in \text{\rm dom}\, L \cap \Omega_{R}.
\end{equation*}
According to Lemma~\ref{lemma-2.3} we have that
\begin{equation}\label{eq-deg0}
D_{L}(L-N,\Omega_{R}) = 0.
\end{equation}
In conclusion, from \eqref{eq-deg1}, \eqref{eq-deg0} and the additivity property of the coincidence degree, we find that
\begin{equation*}
D_{L}(L-N, \Omega_{R} \setminus \text{\rm cl}(\Omega_{r})) = -1.
\end{equation*}
This ensures the existence of a (nontrivial) solution $\tilde{u}$ to \eqref{eq-2.7}
with $\tilde{u}\in \Omega_{R} \setminus \text{\rm cl}(\Omega_{r})$.
Since $\tilde{u}$ is a nontrivial solution of \eqref{eq-2.8}, by the (strong) maximum principle
(following from the definition of $\tilde{f}$ for $s\leq 0$, $(f_{1})$ and $(f_{2})$),
we have that $\tilde{u}$ is a solution of \eqref{BVP-sec2} with $\tilde{u}(x)>0$ for all $x\in \mathopen{[}0,T\mathclose{]}$.

\medskip

If we are in the case
\begin{equation*}
0 < R < r,
\end{equation*}
we proceed in an analogous manner. With respect to the previous situation, the only relevant changes are the following.
First we fix a constant $M = M_{R} > 0$ and, for the set $\Omega_{R}$, we obtain \eqref{eq-deg0}. As a next step,
we repeat the first part of the above proof, we fix a constant $M_{r} > M_{R}$ and, for the set $\Omega_{r}$, we obtain \eqref{eq-deg1}.
Now we have
\begin{equation*}
D_{L}(L-N, \Omega_{r} \setminus \text{\rm cl}(\Omega_{R})) = 1.
\end{equation*}
This ensures the existence of a (nontrivial) solution $\tilde{u}$ to \eqref{eq-2.7}
with $\tilde{u}\in \Omega_{r} \setminus \text{\rm cl}(\Omega_{R})$ and then we conclude as above, showing that
$\tilde{u}(x)>0$ for all $x\in \mathopen{[}0,T\mathclose{]}$ (by the strong maximum principle).
\end{proof}

\begin{remark}\label{rem-2.1}
If we take as boundary conditions the periodic ones, namely with
${\mathscr{B}}(u,u') = \underline{0}$ written as
\begin{equation*}
u(0) = u(T), \qquad u'(0) = u'(T),
\end{equation*}
then Theorem~\ref{th-2.1} holds true if, in place of the differential operator $u \mapsto - u''$,
we take a linear differential operator of the form $u \mapsto - (u'' + c u')$,
with $c\in {\mathbb{R}}$ a fixed constant.
$\hfill\lhd$
\end{remark}

\begin{remark}\label{rem-2.2}
The condition $(f_{2})$ is required only to assure that a non-negative solution is strictly positive. If we do not assume $(f_{2})$,
with the same proof, we can provide a variant of Theorem~\ref{th-2.1} in which we obtain the existence of nontrivial non-negative solutions.
In this case, condition $(H_{r})$ should be modified requiring that any $u(x) \geq 0$ satisfies $\|u\|_{\infty} \neq r$.
$\hfill\lhd$
\end{remark}

\section{Positive solutions of superlinear problems}\label{section3}

In this section we give an application of Theorem~\ref{th-2.1} to the existence of positive solutions for problem $({\mathscr{P}})$.
Throughout the section, we suppose that $g \colon {\mathbb{R}}^{+} \to {\mathbb{R}}^{+}$ is a continuous function such that
\begin{equation*}
g(0)=0, \qquad g(s)>0 \quad \text{for } \; s>0.
\leqno{\hspace*{2.2pt}(g_{1})}
\end{equation*}
The weight coefficient $a\colon\mathopen{[}0,T\mathclose{]}\to{\mathbb{R}}$ is a $L^{1}$-function such that
\begin{itemize}
\item[$(a_{1})$]
\textit{there exist $m \geq 1$ intervals $I_{1},\ldots,I_{m}$, closed and pairwise disjoint, such that
\begin{equation*}
\begin{aligned}
& a(x)\geq 0, \;\; \text{for a.e. } x\in I_{i}, \text{ with }\; a(x)\not\equiv 0 \text{ on } I_{i} \quad (i=1,\ldots,m);\\
& a(x)\leq 0, \;\; \text{for a.e. } x\in \mathopen{[}0,T\mathclose{]}\setminus\bigcup_{i=1}^{m} I_{i};
\end{aligned}
\end{equation*}}
\end{itemize}
\vspace*{-12pt}
\begin{equation*}
\bar{a}:=\dfrac{1}{T}\int_{0}^{T}a(x)~\!dx < 0.
\leqno{\hspace*{2.2pt}(a_{2})}
\end{equation*}

Let $\lambda_{1}^{i}$, $i=1,\ldots,m$, be the first eigenvalue of the eigenvalue problem
\begin{equation}\label{eq-eig}
\varphi'' + \lambda a(x) \varphi = 0, \quad \varphi|_{\partial I_{i}} = 0.
\end{equation}
\noindent From the assumptions on $a(x)$ in $I_{i}$ it clearly follows that $\lambda_{1}^{i} > 0$ for each $i=1,\ldots, m$.
In the sequel, if necessary, it will be not restrictive to label the intervals $I_{i}$ following the natural order given by the
standard orientation of the real line.

\begin{theorem}\label{MainTheorem}
Let $g(s)$ and $a(x)$ be as above. Suppose also that $g(s)$ is regularly oscillating at zero and satisfies
\begin{equation*}
\lim_{s\to 0^{+}}\dfrac{g(s)}{s}=0 \quad \text{ and } \quad
g_{\infty}:= \liminf_{s\to +\infty} \dfrac{g(s)}{s} > \max_{i=1,\ldots,m} \lambda_{1}^{i}.
\leqno{\hspace*{2.2pt}(g_{2})}
\end{equation*}
Then problem $({\mathscr{P}})$ has at least one positive solution.
\end{theorem}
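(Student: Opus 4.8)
The plan is to deduce the theorem from the abstract existence result Theorem~\ref{th-2.1}, applied to the ($\xi$-independent) Carath\'{e}odory nonlinearity $f(x,s,\xi):=a(x)g(s)$. First I would check the structural hypotheses. Condition $(f_{1})$ is immediate from $g(0)=0$. For $(f_{2})$, the limit $\lim_{s\to0^{+}}g(s)/s=0$ provides a constant $C>0$ with $g(s)\le Cs$ on a right neighbourhood $\mathopen[0,\rho\mathclose]$ of the origin, so that $|f(x,s,\xi)|\le C|a(x)|\,s\le k(x)(|s|+|\xi|)$ with $k:=C|a|\in L^{1}$. Since $f$ does not depend on $\xi$, the Nagumo condition $(f_{3})$ is trivially satisfied with $p=1$ by taking the constant $\phi\equiv\max_{\mathopen[0,\eta\mathclose]}g>0$ and $\psi:=|a|\in L^{1}$, whence $\int^{\infty}1/\phi\,d\xi=\infty$ and $\liminf_{\xi\to+\infty}\phi>0$. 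It then remains to produce radii with $0<r<R$ realizing $(H_{r})$ and $(H_{R})$.

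For $(H_{r})$ I would first observe that the average condition holds for every $r>0$, since $\int_{0}^{T}f(x,r,0)\,dx=g(r)\int_{0}^{T}a\,dx=T\bar{a}\,g(r)<0$ by $(g_{1})$ and $(a_{2})$. The separation requirement $\|u\|_{\infty}\neq r$ I would obtain by proving that there are no \emph{small} positive solutions: there is $r_{0}>0$ such that every positive solution of $u''+\vartheta a(x)g(u)=0$, ${\mathscr{B}}(u,u')=\underline{0}$, with $\vartheta\in\mathopen]0,1\mathclose]$, satisfies $\|u\|_{\infty}\ge r_{0}$; then $(H_{r})$ holds for any $r\in\mathopen]0,r_{0}\mathclose[$. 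The argument I have in mind is by contradiction and rescaling: if $u_{n}$ were positive solutions with $\mu_{n}:=\|u_{n}\|_{\infty}\to0$, set $w_{n}:=u_{n}/\mu_{n}$, so that $\|w_{n}\|_{\infty}=1$ and
\begin{equation*}
w_{n}''+\vartheta_{n}\,a(x)\,\frac{g(\mu_{n}w_{n})}{g(\mu_{n})}\,\frac{g(\mu_{n})}{\mu_{n}}=0.
\end{equation*}
Here regular oscillation at zero forces $g(\mu_{n}w_{n})/g(\mu_{n})\to1$ uniformly, while $\lim_{s\to0^{+}}g(s)/s=0$ gives $g(\mu_{n})/\mu_{n}\to0$; hence $w_{n}''\to0$ in $L^{1}$ and, up to subsequences, $w_{n}\to1$ (the only constant of sup-norm one compatible with ${\mathscr{B}}$). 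Dividing the integral identity $\int_{0}^{T}a(x)g(u_{n}(x))\,dx=0$ (obtained by integrating the equation over $\mathopen[0,T\mathclose]$ and using ${\mathscr{B}}$) by $g(\mu_{n})$ and passing to the limit yields $T\bar{a}=0$, contradicting $(a_{2})$. This is precisely the step where regular oscillation at zero is indispensable.

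For $(H_{R})$ I would let $j$ realize $\max_{i}\lambda_{1}^{i}$, denote by $\varphi_{j}>0$ the first eigenfunction of \eqref{eq-eig} on $I_{j}$, and choose the forcing $v:=a\,\mathbbm{1}_{I_{j}}\ge0$, which is nontrivial by $(a_{1})$. Testing the perturbed equation $u''+a(x)g(u)+\alpha v=0$ against $\varphi_{j}$ via Green's identity on $I_{j}$, the nonnegative boundary terms (coming from $\varphi_{j}|_{\partial I_{j}}=0$, the sign of $\varphi_{j}'$ at the endpoints, and $u\ge0$) give
\begin{equation*}
\int_{I_{j}}a(x)\bigl(\lambda_{1}^{j}u-g(u)\bigr)\varphi_{j}\,dx\ \ge\ \alpha\int_{I_{j}}a(x)\varphi_{j}\,dx.
\end{equation*}
Since $g_{\infty}>\lambda_{1}^{j}$, the superlinear growth at infinity makes the left-hand side negative wherever $u$ is large and $a>0$, so this inequality bounds $u$ on $I_{j}$ \emph{independently of} $\alpha\ge0$; combined with the convexity of $u$ off $\bigcup_{i}I_{i}$ and the boundary conditions, this yields a uniform a priori bound $R_{*}$ for $\|u\|_{\infty}$ valid for all $\alpha\ge0$. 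This quantitative a priori estimate, borrowed from the Appendix and \cite{FeZa-pre2014}, is the main obstacle of the proof. Fixing any $R>R_{*}$ then forces $\|u\|_{\infty}\le R_{*}<R$ for every solution, giving the first requirement of $(H_{R})$; moreover, for $0\le u\le R$ the left-hand side above is at most $\lambda_{1}^{j}R\int_{I_{j}}a\varphi_{j}$, so choosing $\alpha_{0}>\lambda_{1}^{j}R$ rules out any solution with $0\le u\le R$ at $\alpha=\alpha_{0}$, which is the second requirement.

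With $(f_{1})$--$(f_{3})$, $(H_{r})$ and $(H_{R})$ established and $0<r<R$, Theorem~\ref{th-2.1} applies and produces a solution $\tilde{u}$ with $r<\max_{x}\tilde{u}<R$; the extension-plus-maximum-principle mechanism built into that theorem guarantees $\tilde{u}(x)>0$ on $\mathopen[0,T\mathclose]$, hence a positive solution of $({\mathscr{P}})$. The only genuinely delicate ingredient is the $\alpha$-uniform upper a priori bound used in $(H_{R})$, which rests on the Sturm-type comparison with the eigenfunctions $\varphi_{j}$ and on the superlinearity at infinity encoded in $g_{\infty}>\max_{i}\lambda_{1}^{i}$.
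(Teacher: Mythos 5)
Your overall skeleton is the paper's: cast the problem in Theorem~\ref{th-2.1} with $f(x,s,\xi)=a(x)g(s)$, obtain $(H_{r})$ by a rescaling/contradiction argument using $(a_{2})$ and regular oscillation, and obtain $(H_{R})$ by an eigenvalue comparison on the intervals of positivity. However, each half contains a genuine gap. In $(H_{R})$ you prove an a priori bound only on the single interval $I_{j}$ realizing $\max_{i}\lambda_{1}^{i}$, and then claim that convexity of $u$ on $[0,T]\setminus\bigcup_{i}I_{i}$ plus the boundary conditions upgrade it to a bound on $\|u\|_{\infty}$. This fails whenever $m\ge 2$: convexity on the complement only shows that $\max_{[0,T]}u$ is attained on $\bigcup_{i}I_{i}$, so it reduces the problem to bounding $u$ on \emph{every} $I_{i}$; on the intervals $I_{i}$ with $i\ne j$ the solution is concave and your inequality says nothing, so nothing excludes an arbitrarily large hump on some $I_{i}$, $i\ne j$, coexisting with moderate values on $I_{j}$. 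This is exactly why the paper applies its a priori bound (Lemma~\ref{lemma_RJ}) on \emph{each} $I_{i}$ and takes $R\ge\max_{i}R_{I_{i}}$; the hypothesis $g_{\infty}>\max_{i}\lambda_{1}^{i}$, as opposed to $g_{\infty}>\lambda_{1}^{j}$ for one $j$, is what makes this possible. The repair is easy, since your Green's identity argument runs verbatim on each $I_{i}$ (the forcing term $\alpha\int_{I_{i}}v\varphi_{i}\ge 0$ enters with the harmless sign even where $v\equiv 0$). Note also that passing from $\int_{I_{j}}a(\lambda_{1}^{j}u-g(u))\varphi_{j}\ge 0$ to a bound on $\max_{I_{j}}u$ is itself not immediate, because $g(s)-\lambda_{1}^{j}s$ may be negative for moderate $s$: one must exploit the concavity of $u$ on $I_{j}$ (a large maximum forces $u$ to be large on most of $I_{j}$), which is the actual content of Lemma~\ref{lemma_RJ}. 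On the other hand, your derivation of the second requirement of $(H_{R})$ — dividing by $\int_{I_{j}}a\varphi_{j}>0$ and taking $\alpha_{0}>\lambda_{1}^{j}R$ — is correct, and is a clean alternative to the paper's choice $\alpha_{0}>\|a\|_{L^{1}}\max_{0\le s\le R}g(s)/\|v\|_{L^{1}}$.

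In $(H_{r})$, the step ``regular oscillation at zero forces $g(\mu_{n}w_{n})/g(\mu_{n})\to 1$ uniformly'' is circular. Regular oscillation controls $g(\omega s)/g(s)$ only in the joint limit $s\to 0^{+}$, $\omega\to 1$; at the point where you invoke it you only know $0<w_{n}\le 1$, and the claim is false in general: for $g(s)=s^{2}$ and a point where $w_{n}=1/2$ the ratio equals $1/4$. The claim becomes available only \emph{after} one knows $w_{n}\to 1$ uniformly, which is precisely what you are trying to derive. The repair is what the paper does: bound the coefficient of the rescaled equation by
\begin{equation*}
\frac{g(u_{n})}{\mu_{n}}=\frac{g(u_{n})}{u_{n}}\,w_{n}\le\sup_{0<s\le\mu_{n}}\frac{g(s)}{s}\longrightarrow 0,
\end{equation*}
using only the first condition in $(g_{2})$; this gives $w_{n}''\to 0$ in $L^{1}$, hence (via a zero of $w_{n}'$ guaranteed by the Neumann or periodic conditions) $w_{n}'\to 0$ and $w_{n}\to 1$ uniformly; regular oscillation is then invoked only at the last step, to pass to the limit in $0=\int_{0}^{T}a(x)\,g(u_{n}(x))/g(\mu_{n})~\!dx$ and conclude $T\bar{a}=0$, contradicting $(a_{2})$. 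With these two corrections your argument coincides in substance with the paper's proof.
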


\begin{proof}
In order to enter in the setting of Theorem~\ref{th-2.1} we define
\begin{equation*}
f(x,s,\xi) = f(x,s) := a(x)g(s)
\end{equation*}
and observe that $f$ is a $L^{1}$-Carath\'{e}odory function.
The basic hypotheses required on $f(x,s,\xi)$ are all satisfied.
In fact, $(f_{1})$ follows from $g(0) = 0$ and $(f_{2})$ is an obvious consequence of the fact that
$g(s)/s$ is bounded on a right neighborhood of $s=0$ and
$a\in L^{1}(\mathopen{[}0,T\mathclose{]})$. By the continuity of $g(s)$ and the integrability of $a(x)$,
the Nagumo condition $(f_{3})$ is trivially satisfied since $f$ does not depend on $\xi$.
Indeed, we can take $p=1$, $\phi(\xi)\equiv 1$ and $\psi(x) = |a(x)| \, \max_{0\leq s \leq \eta} g(s)$.

\medskip

\noindent
\textit{Verification of $(H_{r})$.} First of all, we observe that $(g_{1})$ and $(a_{2})$ imply that
\begin{equation}\label{eq-fs}
\int_{0}^{T} f(x,s,0)~\!dx < 0, \quad \forall \, s > 0.
\end{equation}
We claim that there exists $r_{0}>0$ such that for all $0<r\leq r_{0}$ and
for all $\vartheta\in\mathopen{]}0,1\mathclose{]}$ there are no solutions $u(x)$ of \eqref{eq-2.9}
such that $u(x)>0$ on $\mathopen{[}0,T\mathclose{]}$ and $\|u\|_{\infty}=r$.

By contradiction, suppose the claim is not true. Then for all $n\in{\mathbb{N}}$
there exist $0<r_{n} <1/n$, $\vartheta_{n}\in\mathopen{]}0,1\mathclose{]}$ and $u_{n}(x)$ solution of
\begin{equation}\label{eq-vartheta-n}
u''+\vartheta_{n} a(x)g(u)=0,\quad {\mathscr{B}}(u,u') = \underline{0},
\end{equation}
such that $u_{n}(x)>0$ on $\mathopen{[}0,T\mathclose{]}$ and $\|u_{n}\|_{\infty} = r_{n}$.

Integrating on $\mathopen{[}0,T\mathclose{]}$ the differential equation in \eqref{eq-vartheta-n} and using the boundary conditions,
we obtain
\begin{equation*}
0=-\int_{0}^{T}u''_{n}(x)~\!dx = \vartheta_{n}\int_{0}^{T}a(x)g(u_{n}(x))~\!dx .
\end{equation*}
Then
\begin{equation}\label{eq-3.4}
\int_{0}^{T}a(x)g(u_{n}(x))~\!dx = 0
\end{equation}
follows.
We define
\begin{equation*}
v_{n}(x):=\dfrac{u_{n}(x)}{\|u_{n}\|_{\infty}}
\end{equation*}
and, dividing \eqref{eq-vartheta-n} by $r_{n} = \|u_{n}\|_{\infty}$,  we get
\begin{equation}\label{eq-vn}
v''_{n}(x)+ \vartheta_{n}a(x)\dfrac{g(u_{n}(x))}{u_{n}(x)}v_{n}(x)=0.
\end{equation}
By the first assumption in $(g_{2})$, for every $\varepsilon>0$ there exists $\delta_{\varepsilon}>0$ such that
\begin{equation*}
0<\dfrac{g(s)}{s}<\varepsilon, \quad \forall \, 0<s<\delta_{\varepsilon}.
\end{equation*}
For $n > 1/\delta_{\varepsilon}$ we have $0 < u_{n}(x) \leq r_{n} < \delta_{\varepsilon}$ for all $x\in \mathopen{[}0,T\mathclose{]}$, so that
\begin{equation*}
0<\dfrac{g(u_{n}(x))}{u_{n}(x)}<\varepsilon, \quad \forall \, x\in \mathopen{[}0,T\mathclose{]}.
\end{equation*}
This proves that
\begin{equation}\label{eq-3.6}
\lim_{n\to \infty}\dfrac{g(u_{n}(x))}{u_{n}(x)}= 0, \quad \text{uniformly on } \mathopen{[}0,T\mathclose{]}.
\end{equation}

We fix $x_{0}\in \mathopen{[}0,T\mathclose{]}$ such that $v_{n}'(x_{0})=0$. With the Neumann boundary condition, we choose $x_{0}=0$ (or $x_{0}=T$),
while, in the periodic case, the existence of such a $x_{0}$ (possibly depending on $n$) is ensured by Rolle's theorem.
Integrating \eqref{eq-vn}, we have
\begin{equation*}
v_{n}'(x)= - \vartheta_{n}\int_{x_{0}}^{x}a(\xi)\dfrac{g(u_{n}(\xi))}{u_{n}(\xi)}v_{n}(\xi)~\!d\xi,
\end{equation*}
hence
\begin{equation}\label{eq-3.7}
\|v_{n}'\|_{\infty} \leq \int_{0}^{T}|a(x)|\dfrac{g(u_{n}(x))}{u_{n}(x)}~\!dx.
\end{equation}
Since $a\in L^{1}(\mathopen{[}0,T\mathclose{]})$, by \eqref{eq-3.6} and the dominated convergence theorem, we find that
$v_{n}'(x)\to 0$ (as $n\to \infty$) uniformly on $\mathopen{[}0,T\mathclose{]}$.

Since $\|v_{n}\|_{\infty}=1$, there exists $x_{1}\in \mathopen{[}0,T\mathclose{]}$ (possibly depending on $n$)
such that $v_{n}(x_{1})=1$. From
\begin{equation*}
v_{n}(x) = v_{n}(x_{1}) - \int_{x_{1}}^{x}v_{n}'(\xi)~\!d\xi,
\end{equation*}
we conclude that
\begin{equation}\label{eq-3.8}
\lim_{n\to \infty} v_{n}(x) = 1, \quad \text{uniformly on } \mathopen{[}0,T\mathclose{]}.
\end{equation}

Now, we write \eqref{eq-3.4} as
\begin{equation*}
0=\int_{0}^{T}a(x)g(u_{n}(x))~\!dx
=\int_{0}^{T} \Bigl{(} a(x)g(r_{n})+a(x)[g(r_{n}v_{n}(x))-g(r_{n})] \Bigr{)}~\!dx.
\end{equation*}
Since $g(r_{n})>0$, then
\begin{equation*}
-\dfrac{1}{T}\int_{0}^{T}a(x)~\!dx
=\dfrac{1}{T}\int_{0}^{T}a(x)\dfrac{g(r_{n}v_{n}(x))-g(r_{n})}{g(r_{n})}~\!dx.
\end{equation*}
Consequently, by $(a_{2})$,
\begin{equation*}
0<-\bar{a}\leq\dfrac{1}{T}\|a\|_{L^{1}} \, \max_{x\in \mathopen{[}0,T\mathclose{]}}\biggr{|}\dfrac{g(r_{n}v_{n}(x))}{g(r_{n})}-1\biggl{|}
= \dfrac{1}{T}\|a\|_{L^{1}} \, \biggr{|}\dfrac{g(r_{n}\omega_{n})}{g(r_{n})}-1\biggl{|},
\end{equation*}
where $\omega_{n} := v_{n}(x_{n})$, for a suitable choice of $x_{n}\in \mathopen{[}0,T\mathclose{]}$,
and also $\omega_{n} \to 1$ (as $n\to \infty$) by \eqref{eq-3.8}.
Using the fact that $g$ is regularly oscillating at zero, we obtain a contradiction as $n\to\infty$.

The claim is thus proved and, recalling also \eqref{eq-fs}, we have that $(H_{r})$ holds
for any $r\in \mathopen{]}0,r_{0}\mathclose{]}$.

\medskip

\noindent
\textit{Verification of $(H_{R})$.}
First of all, we fix a nontrivial function $v\in L^{1}(\mathopen{[}0,T\mathclose{]})$, with $v(x)\not\equiv 0$ on $\mathopen{[}0,T\mathclose{]}$,
such that
\begin{equation*}
\begin{aligned}
& v(x)\geq 0, \;\; \text{for a.e. } x\in \bigcup_{i=1}^{m} I_{i};\\
& v(x) = 0, \;\; \text{for a.e. } x\in \mathopen{[}0,T\mathclose{]}\setminus\bigcup_{i=1}^{m} I_{i}.
\end{aligned}
\end{equation*}
For example, as $v(x)$ we can take the characteristic function of the set
\begin{equation*}
A:=\bigcup_{i=1}^{m} I_{i}.
\end{equation*}
Secondly, we observe that $a(x) \geq 0$ on each interval $I_{i}$, so that
\begin{equation*}
\liminf_{s\to +\infty} \dfrac{f(x,s)}{s} \geq a(x) g_{\infty}, \quad \text{uniformly a.e. } x\in I_{i}.
\end{equation*}
Moreover, the second condition in $(g_{2})$ implies that the first eigenvalue of the eigenvalue problem
\begin{equation*}
\varphi''+\lambda g_{\infty}a(x) \varphi = 0, \quad \varphi|_{\partial I_{i}}=0,
\end{equation*}
is strictly less than $1$, for all $i=1,\ldots,m$.
Thus, we can apply Lemma~\ref{lemma_RJ} on each interval $I_{i}$, with
\begin{equation*}
J:=I_{i}, \quad h(x,s):=a(x)g(s), \quad q_{\infty}(x):=a(x)g_{\infty}.
\end{equation*}
Hence, for each $i=1,\ldots,m$, we obtain the existence of a constant $R_{I_{i}}>0$ such that
for each Carath\'{e}odory function $k\colon \mathopen{[}0,T\mathclose{]} \times{\mathbb{R}}^{+}\to {\mathbb{R}}$ with
\begin{equation*}
k(x,s)\geq a(x)g(s), \quad \text{a.e. } x\in I_{i}, \; \forall \, s\geq0,
\end{equation*}
every solution $u(x)\geq0$ of the BVP
\begin{equation}\label{BVP-3.9}
\begin{cases}
\, u''+ k(x,u)=0 \\
\, {\mathscr{B}}(u,u') = \underline{0}
\end{cases}
\end{equation}
satisfies $\max_{x\in I_{i}}u(x)<R_{I_{i}}$.

Then, we fix a constant $R > r_{0}$ (with $r_{0}$ coming from the first part of the proof) such that
\begin{equation}\label{eq-R}
R \geq \max_{i=1,\ldots,m}R_{I_{i}}
\end{equation}
and another constant $\alpha_{0} > 0$ such that
\begin{equation}\label{eq-3.11}
\alpha_{0}>\dfrac{\|a\|_{L^{1}}\,\max_{0\leq s \leq R}g(s)}{\|v\|_{L^{1}}}.
\end{equation}

We take $\alpha\in\mathopen{[}0,\alpha_{0}\mathclose{]}$.
We observe that any solution $u(x)\geq 0$ of problem \eqref{eq-2.10} is a solution of \eqref{BVP-3.9} with
\begin{equation*}
k(x,s) = a(x)g(s) + \alpha v(x).
\end{equation*}
By definition, we have that $k(x,s) \geq a(x)g(s)$ for a.e.~$x\in A$ and for all $s\geq 0$, and also $k(x,s) = a(x)g(s) \leq 0$
for a.e.~$x\in \mathopen{[}0,T\mathclose{]}\setminus A$ and for all $s\geq 0$.
By the convexity of the solutions of \eqref{BVP-3.9} on the intervals of $\mathopen{[}0,T\mathclose{]}\setminus A$, we obtain
\begin{equation*}
\max_{x\in \mathopen{[}0,T\mathclose{]}}u(x)=\max_{x\in A}u(x)
\end{equation*}
and, as an application of Lemma~\ref{lemma_RJ} on each of the intervals $I_{i}$, we conclude that
\begin{equation*}
\|u\|_{\infty} < R.
\end{equation*}
This proves the first part of $(H_{R})$.

It remains to verify that for $\alpha = \alpha_{0}$ defined in \eqref{eq-3.11} there are no solutions $u(x)$ of \eqref{eq-2.10} with
$0\leq u(x) \leq R$ on $\mathopen{[}0,T\mathclose{]}$. Indeed, if $u$ is a solution of \eqref{eq-2.10}, or
equivalently of
\begin{equation*}
\begin{cases}
\, u''+ a(x)g(u) + \alpha v(x) = 0 \\
\, {\mathscr{B}}(u,u') = \underline{0},
\end{cases}
\end{equation*}
with $0 \leq u(x) \leq R$, then, integrating on $\mathopen{[}0,T\mathclose{]}$ the differential equation
 and using the boundary conditions, we obtain
\begin{equation*}
\alpha \|v\|_{L^{1}} = \alpha \int_{0}^{T}v(x)~\!dx \leq \int_{0}^{T}|a(x)|g(u(x))~\!dx \leq \|a\|_{L^{1}}\,\max_{0\leq s \leq R}g(s),
\end{equation*}
which leads to a contradiction with respect to the choice of $\alpha_{0}$. Thus $(H_{R})$ is verified.

\medskip

\noindent
Having verified $(H_{r})$ and $(H_{R})$, the thesis follows from Theorem~\ref{th-2.1}.
\end{proof}

\begin{remark}\label{rem-3.1}
\noindent From the verification of condition $(H_{R})$ performed in the above proof, it is clear that
the assumption $g_{\infty} > \max \lambda_{1}^{i}$ is employed in connection with Lemma~\ref{lemma_RJ} (in the Appendix)
in order to obtain the a priori bounds $R_{I_{i}}$ on the intervals $I_{i}$. In turn, this step in the proof is based on a
Sturm comparison argument involving the eigenfunctions of \eqref{eq-eig}. If among the intervals $I_{i}$ there is one of the form
$I_{1} = \mathopen{[}0,\sigma\mathclose{]}$ or one of the form $I_{m} = \mathopen{[}\tau,T\mathclose{]}$ (both cases are also possible),
then the choice of the eigenvalues can be
made in a more refined manner in order to improve the lower bound for $g_{\infty}$. More precisely, whenever such a situation occurs,
we can proceed as follows.

\noindent
$\;\bullet\;$ For the Neumann problem, if $I_{1} = \mathopen{[}0,\sigma\mathclose{]}$ we take as $\lambda_{1}^{1}$
the first positive eigenvalue of the eigenvalue problem
\begin{equation*}
\varphi'' + \lambda a(x) \varphi = 0, \quad \varphi'(0) = \varphi(\sigma) = 0.
\end{equation*}
Similarly, if $I_{m} = \mathopen{[}\tau,T\mathclose{]}$ we take as $\lambda_{1}^{m}$ the first positive eigenvalue of the eigenvalue problem
\begin{equation*}
\varphi'' + \lambda a(x) \varphi = 0, \quad \varphi(\tau) = \varphi'(T) = 0.
\end{equation*}

\noindent
$\;\bullet\;$ For the periodic problem, we can extend the coefficient by $T$-periodicity on the
whole real line and, after a shift on the $x$-variable, we consider an equivalent problem where the
weight function is negative in a neighborhood of the endpoints. We try to clarify this concept with an example.
Suppose that we are interested in the search of $2\pi$-periodic solutions of equation
\begin{equation*}
u'' + (-k +\cos(x)) g(u) = 0,
\end{equation*}
where $ 0 < k < 1$ is a fixed constant.
In this case, setting the problem on the interval $\mathopen{[}0,2\pi\mathclose{]}$,
we should consider the eigenvalue problem \eqref{eq-eig} on the two intervals
$I_{1} = \mathopen{[}0,\arccos k\mathclose{]}$ and $I_{2} = \mathopen{[}2\pi-\arccos k,2\pi\mathclose{]}$, where the weight function is positive.
On the other hand, since we are looking for $2\pi$-periodic solutions, we could
work on any interval of length $2\pi$, for instance $\mathopen{[}\pi,3\pi\mathclose{]}$. This is equivalent to consider the
periodic boundary conditions on $\mathopen{[}0,2\pi\mathclose{]}$ for the equation
\begin{equation*}
v'' + (-k +\cos(x-\pi)) g(v) = 0.
\end{equation*}
In this latter case, the weight is negative at the endpoints $x= 0$ and $x=2\pi$ and there is only one
interval of non-negativity, so that we have to consider the eigenvalue problem \eqref{eq-eig} only on
$I_{1} = \mathopen{[}\pi - \arccos k, \pi + \arccos k\mathclose{]}$. In this way we can produce a better lower bound for $g_{\infty}$
by studying an equivalent problem.

The same remarks as those just made above apply in any subsequent variant of Theorem~\ref{MainTheorem}, for instance, we
can refine the choice of the constants $\lambda_{1}^{i}$ in Corollary~\ref{cor-3.4} below.
$\hfill\lhd$
\end{remark}

The following corollaries are straightforward consequences of Theorem~\ref{MainTheorem}.

\begin{corollary}\label{cor-3.1}
Let $g(s)$ and $a(x)$ satisfy $(g_{1})$ and $(a_{1})$, $(a_{2})$, respectively.
Suppose also that $g(s)$ is regularly oscillating at zero and satisfies
\begin{equation*}
\lim_{s\to 0^{+}}\dfrac{g(s)}{s}=0 \quad \text{ and } \quad \lim_{s\to +\infty}\dfrac{g(s)}{s}=+\infty.
\end{equation*}
Then problem $({\mathscr{P}})$ has at least one positive solution.
\end{corollary}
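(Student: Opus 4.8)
The plan is to observe that this corollary is nothing more than a specialization of Theorem~\ref{MainTheorem}, obtained by strengthening the behavior of $g$ at infinity. First I would compare the two sets of hypotheses: the assumptions $(g_{1})$, $(a_{1})$, $(a_{2})$, regular oscillation at zero, and $\lim_{s\to0^{+}}g(s)/s=0$ appear verbatim in both statements, so the only point requiring attention is the condition on the growth of $g(s)/s$ as $s\to+\infty$.

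The key (and in fact only) step is to note that the hypothesis $\lim_{s\to+\infty}g(s)/s=+\infty$ forces $g_{\infty}:=\liminf_{s\to+\infty}g(s)/s=+\infty$. Since, as remarked just before the statement of Theorem~\ref{MainTheorem}, each eigenvalue $\lambda_{1}^{i}$ ($i=1,\ldots,m$) is a finite positive real number, we trivially have $g_{\infty}=+\infty>\max_{i=1,\ldots,m}\lambda_{1}^{i}$. Hence the full condition $(g_{2})$ is satisfied, and Theorem~\ref{MainTheorem} applies directly to yield a positive solution of $({\mathscr{P}})$.

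There is essentially no obstacle here: the entire content of the corollary lies in Theorem~\ref{MainTheorem}, and the deduction consists only in remarking that a divergent limit is, in particular, a $\liminf$ exceeding any prescribed finite threshold. No new a priori estimate or degree computation is needed, which is exactly why the statement is labelled a straightforward consequence.
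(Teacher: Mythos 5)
Your proposal is correct and coincides with the paper's own (implicit) argument: the paper states Corollary~\ref{cor-3.1} as a straightforward consequence of Theorem~\ref{MainTheorem}, precisely because $\lim_{s\to+\infty}g(s)/s=+\infty$ gives $g_{\infty}=+\infty>\max_{i=1,\ldots,m}\lambda_{1}^{i}$ (each $\lambda_{1}^{i}$ being a finite positive number), so $(g_{2})$ holds and the theorem applies verbatim.
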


\begin{corollary}\label{cor-3.2}
Let $g(s)$ and $a(x)$ satisfy $(g_{1})$ and $(a_{1})$, $(a_{2})$, respectively.
Suppose also that $g(s)$ is regularly oscillating at zero and satisfies
\begin{equation*}
\lim_{s\to 0^{+}}\dfrac{g(s)}{s}=0 \quad \text{ and } \quad g_{\infty} > 0.
\end{equation*}
Then there exists $\nu^{*}>0$ such that the boundary value problem
$({\mathscr{P}}_{\nu})$
has at least one positive solution for each $\nu > \nu^{*}$.
\end{corollary}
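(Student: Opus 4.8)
The plan is to derive Corollary~\ref{cor-3.2} as a direct application of Theorem~\ref{MainTheorem} to the rescaled problem $({\mathscr{P}}_{\nu})$. The idea is that for the equation $u'' + \nu\, a(x) g(u) = 0$ the effective weight is $\nu\, a(x)$, so the relevant eigenvalue problems on the intervals $I_i$ become
\begin{equation*}
\varphi'' + \lambda\, \nu\, a(x)\,\varphi = 0, \quad \varphi|_{\partial I_i} = 0,
\end{equation*}
whose first eigenvalue is precisely $\lambda_1^i/\nu$, where $\lambda_1^i$ is the first eigenvalue of \eqref{eq-eig} associated with $a(x)$. Hence the superlinearity-at-infinity condition needed by Theorem~\ref{MainTheorem}, namely $(\nu a)_\infty > \max_i (\lambda_1^i/\nu)$, where $(\nu a)$ denotes the weight of the rescaled problem, reads as
\begin{equation*}
\nu\, g_\infty > \max_{i=1,\ldots,m} \frac{\lambda_1^i}{\nu},
\end{equation*}
that is $\nu^2\, g_\infty > \max_i \lambda_1^i$.

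First I would observe that all the standing hypotheses of Theorem~\ref{MainTheorem} other than the condition on $g_\infty$ are preserved under multiplication of the weight by the positive constant $\nu$. Indeed $(g_1)$, the regular oscillation at zero, and the limit $\lim_{s\to0^+} g(s)/s = 0$ are intrinsic properties of $g(s)$ and do not involve the weight. Likewise, $a_\nu(x) := \nu\, a(x)$ satisfies $(a_1)$ with the same intervals $I_1,\ldots,I_m$ (since $\nu > 0$ does not change signs) and satisfies $(a_2)$ because $\bar{a}_\nu = \nu\,\bar{a} < 0$ whenever $\bar{a} < 0$. Thus the only remaining requirement is the lower bound on the $\liminf$ of $a_\nu(x) g(s)/s$ at infinity relative to the eigenvalues of the rescaled problem.

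Next I would make the eigenvalue scaling precise. Since $g_\infty > 0$ by hypothesis, the liminf condition of $(g_2)$ for the rescaled problem amounts to
\begin{equation*}
(g_\nu)_\infty := \liminf_{s\to+\infty} \frac{\nu\, a(x) g(s)}{s} \quad\text{being controlled by}\quad \nu\, g_\infty > \max_{i=1,\ldots,m} \lambda_1^{i,\nu},
\end{equation*}
where $\lambda_1^{i,\nu} = \lambda_1^i/\nu$ is the first eigenvalue of $\varphi'' + \lambda\,\nu\,a(x)\varphi = 0$ on $I_i$ with Dirichlet conditions. This last inequality is equivalent to $\nu^2 > (\max_i \lambda_1^i)/g_\infty$. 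Therefore, setting
\begin{equation*}
\nu^* := \sqrt{\frac{\max_{i=1,\ldots,m} \lambda_1^i}{g_\infty}} \, > 0,
\end{equation*}
we have that for every $\nu > \nu^*$ condition $(g_2)$ holds for the weight $a_\nu(x) = \nu\,a(x)$, and Theorem~\ref{MainTheorem} applies to $({\mathscr{P}}_\nu)$, yielding at least one positive solution.

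The argument is essentially bookkeeping, so there is no substantive obstacle; the only point requiring care is the eigenvalue rescaling $\lambda_1^{i,\nu} = \lambda_1^i/\nu$, which follows immediately from the linearity of the eigenvalue problem in the product $\lambda\nu$. One should verify that $g_\infty > 0$ is genuinely used: it guarantees both that $\nu^*$ is finite and that the $\liminf$ comparison with the (now arbitrarily small) rescaled eigenvalues can indeed be met for large $\nu$. With $\nu^*$ so defined, the conclusion follows directly, and I would simply invoke Theorem~\ref{MainTheorem} to close the proof.
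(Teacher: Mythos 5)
Your overall strategy --- absorbing $\nu$ into the weight, applying Theorem~\ref{MainTheorem} to $a_{\nu}(x):=\nu\,a(x)$, and using the eigenvalue scaling $\lambda_{1}^{i,\nu}=\lambda_{1}^{i}/\nu$ --- is exactly the intended route: the paper obtains Corollary~\ref{cor-3.2} as a straightforward consequence of Theorem~\ref{MainTheorem} in precisely this way, and your verification that $(g_{1})$, the regular oscillation, the limit at zero, $(a_{1})$ and $(a_{2})$ all transfer to $a_{\nu}$ is correct. However, there is a genuine error in the bookkeeping: you count the factor $\nu$ twice. If you view $({\mathscr{P}}_{\nu})$ as the problem with weight $a_{\nu}=\nu a$ and nonlinearity $g$, the hypothesis of Theorem~\ref{MainTheorem} is $g_{\infty}>\max_{i}\lambda_{1}^{i,\nu}=\max_{i}\lambda_{1}^{i}/\nu$, where $g_{\infty}=\liminf_{s\to+\infty}g(s)/s$ does \emph{not} acquire a factor $\nu$ --- the $\nu$ is already inside the weight. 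Alternatively, you may view the problem as having weight $a$ and nonlinearity $\nu g$, in which case the condition is $\nu g_{\infty}>\max_{i}\lambda_{1}^{i}$ with the \emph{unscaled} eigenvalues. Both readings give the same condition, namely $\nu>\max_{i}\lambda_{1}^{i}/g_{\infty}$. By writing ``$\nu g_{\infty}>\max_{i}\lambda_{1}^{i}/\nu$'' you apply the rescaling on both sides simultaneously, arriving at $\nu^{2}g_{\infty}>\max_{i}\lambda_{1}^{i}$ and hence at the threshold $\nu^{*}=\sqrt{\max_{i}\lambda_{1}^{i}/g_{\infty}}$, which is too small.

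Concretely, the step fails whenever $\max_{i}\lambda_{1}^{i}/g_{\infty}>1$: for any $\nu$ with $\sqrt{\max_{i}\lambda_{1}^{i}/g_{\infty}}<\nu<\max_{i}\lambda_{1}^{i}/g_{\infty}$, condition $(g_{2})$ for the weight $\nu a(x)$ reads $g_{\infty}>\max_{i}\lambda_{1}^{i}/\nu$, which is false, so Theorem~\ref{MainTheorem} does not apply for those $\nu$, contrary to what your proof asserts. The fix is immediate: set $\nu^{*}:=\max_{i=1,\ldots,m}\lambda_{1}^{i}/g_{\infty}$, which is finite precisely because $g_{\infty}>0$ (this is where the hypothesis is genuinely used, as you correctly flagged). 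Then for every $\nu>\nu^{*}$ one has $g_{\infty}>\max_{i}\lambda_{1}^{i}/\nu$, all the remaining hypotheses transfer to $a_{\nu}$ exactly as you checked, and Theorem~\ref{MainTheorem} yields a positive solution of $({\mathscr{P}}_{\nu})$.
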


The following consequence of Theorem~\ref{MainTheorem} provides a necessary and sufficient condition for the existence of positive solutions
to problem $({\mathscr{P}})$ when $g(s)=s^{\gamma}$ for $\gamma>1$. It can be viewed as a version of \cite[Theorem~1]{BeCaDoNi-1995} for the
periodic case (in \cite{BeCaDoNi-1995} the authors already obtained the same result for the Neumann problem for PDEs).

\begin{corollary}\label{cor-3.3}
The superlinear boundary value problem
\begin{equation*}
\begin{cases}
\, u'' + a(x)u^{\gamma}=0, \quad \gamma > 1, \\
\, {\mathscr{B}}(u,u') = \underline{0},
\end{cases}
\end{equation*}
with a weight function $a(x)$ satisfying $(a_{1})$,
has a positive solution if and only if the average condition $(a_{2})$ holds.
\end{corollary}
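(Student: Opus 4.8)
The plan is to prove the two implications separately, obtaining sufficiency directly from Corollary~\ref{cor-3.1} and necessity from the integral identity recalled in Section~\ref{section1}. The topological content is entirely confined to the sufficiency half; the necessity is an elementary computation.

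For the \emph{sufficiency} of $(a_{2})$, I would simply check that $g(s) = s^{\gamma}$ with $\gamma > 1$ fits the hypotheses of Corollary~\ref{cor-3.1}. Indeed $g$ satisfies $(g_{1})$; it is regularly oscillating at zero, since $g(\omega s)/g(s) = \omega^{\gamma} \to 1$ as $\omega \to 1$; and $g(s)/s = s^{\gamma-1}$ tends to $0$ as $s \to 0^{+}$ and to $+\infty$ as $s \to +\infty$, because $\gamma > 1$. Together with the standing assumption $(a_{1})$ on the weight and the hypothesis $(a_{2})$, Corollary~\ref{cor-3.1} then yields a positive solution of the problem.

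For the \emph{necessity} of $(a_{2})$, I would argue as follows. Let $u$ be a positive solution; then $u(x) \geq \min_{[0,T]} u > 0$, so $g(u(x)) = u(x)^{\gamma}$ is bounded away from zero and $1/g(u)$ is a $\mathcal{C}^{1}$ function on $\mathopen{[}0,T\mathclose{]}$. Dividing the equation $u'' = -a(x)g(u)$ by $g(u)$, integrating over $\mathopen{[}0,T\mathclose{]}$ and integrating by parts (the boundary terms vanish under both the Neumann and the periodic conditions), I obtain
\begin{equation*}
-\int_{0}^{T} a(x)~\!dx = \int_{0}^{T} g'(u(x))\biggl(\dfrac{u'(x)}{g(u(x))}\biggr)^{\!2}~\!dx,
\end{equation*}
which is exactly the identity quoted in the Introduction. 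Since $g'(s) = \gamma s^{\gamma-1} > 0$ for $s > 0$, the right-hand side is non-negative, which already gives $\bar{a} \leq 0$. To upgrade this to the strict inequality $\bar{a} < 0$ of $(a_{2})$, I would note that equality would force $u' \equiv 0$, i.e. $u \equiv c$ for some constant $c > 0$; but then $0 = u'' = -a(x)c^{\gamma}$ a.e., forcing $a \equiv 0$, against $(a_{1})$, which requires $a \not\equiv 0$ on each $I_{i}$. Hence the right-hand side is strictly positive and $\bar{a} < 0$.

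The argument is essentially routine, the only points requiring a little care being the legitimacy of the integration by parts and the strictness of the final inequality. Both are handled by the observation that a positive solution is bounded away from zero on the compact interval $\mathopen{[}0,T\mathclose{]}$, so that $1/g(u)$ is genuinely of class $\mathcal{C}^{1}$, together with the fact that $(a_{1})$ excludes a nonzero constant solution. No Nagumo-type estimate or degree-theoretic input is needed for necessity; I expect the sufficiency half, invoked through Corollary~\ref{cor-3.1}, to carry all the real weight of the statement.
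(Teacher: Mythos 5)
Your proposal is correct and follows essentially the same route as the paper: sufficiency by verifying that $g(s)=s^{\gamma}$ is regularly oscillating at zero and satisfies the limit conditions of Corollary~\ref{cor-3.1}, and necessity via the integral identity $-\int_{0}^{T}a(x)\,dx=\int_{0}^{T}g'(u(x))\bigl(u'(x)/g(u(x))\bigr)^{2}dx$ from the Introduction, exploiting $g'(s)>0$ on $\mathbb{R}^{+}_{0}$. The only difference is that you spell out what the paper leaves implicit (the vanishing of the boundary terms and the strictness argument ruling out constant solutions via $(a_{1})$), which is a welcome, but not conceptually different, elaboration.
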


\begin{proof}
The necessary part of the statement is a consequence of the fact that $g(s):= s^{\gamma}$, for $\gamma > 1$,
has a positive derivative on ${\mathbb{R}}^{+}_{0}$ (see also the Introduction). For the sufficient part we
apply Corollary~\ref{cor-3.1}, observing that $g(s)$ is regularly oscillating at zero.
\end{proof}

\medskip

As one can clearly notice from the proof, the condition $g_{\infty} > \max \lambda_{1}^{i}$ in Theorem~\ref{MainTheorem}
is required in order to obtain suitable a priori bounds $R_{I_{i}}$ for the maximum of the solutions on each of the intervals
$I_{i}$, where $a\geq 0$ and $a\not\equiv 0$. Hence we can choose a constant $R$ satisfying \eqref{eq-R} and have
$(H_{R})$ verified. As observed in \cite{FeZa-pre2014}, if $g(s)/s$ is bounded (for $s$ large), it is sufficient to obtain an a priori
bound \textit{only on one} of the intervals $I_{i}$ and the existence of a global upper bound follows from standard ODEs arguments
related to the classical Gronwall's inequality. Similarly, also in the present situation, following the proof of
\cite[Theorem~3.2]{FeZa-pre2014}, we can obtain the next result.

\begin{corollary}\label{cor-3.4}
Let $g(s)$ and $a(x)$ satisfy $(g_{1})$ and $(a_{1})$, $(a_{2})$, respectively.
Suppose also that $g(s)$ is regularly oscillating at zero and satisfies
\begin{equation*}
\lim_{s\to 0^{+}}\dfrac{g(s)}{s}=0, \quad
g_{\infty} > \min_{i=1,\ldots,m} \lambda_{1}^{i}
\quad \text{ and } \quad
\limsup_{s\to +\infty} \dfrac{g(s)}{s} < +\infty.
\end{equation*}
Then problem $({\mathscr{P}})$ has at least one positive solution.
\end{corollary}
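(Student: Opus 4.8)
The plan is to deduce Corollary~\ref{cor-3.4} from Theorem~\ref{th-2.1} along exactly the same lines as Theorem~\ref{MainTheorem}, keeping the reduction $f(x,s,\xi) = a(x)g(s)$, which is $L^{1}$-Carath\'{e}odory and satisfies $(f_{1})$, $(f_{2})$, $(f_{3})$ for the same elementary reasons given there. The verification of $(H_{r})$ would be literally the one carried out in the proof of Theorem~\ref{MainTheorem}: it uses only the behaviour of $g$ near zero, namely that $g$ is regularly oscillating at zero and $\lim_{s\to0^{+}}g(s)/s=0$, together with $(a_{2})$, and none of these hypotheses has changed. Hence there is $r_{0}>0$ such that $(H_{r})$ holds for every $r\in\mathopen{]}0,r_{0}\mathclose{]}$, and I would simply refer back to that argument.

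The only genuinely different point is the verification of $(H_{R})$, where now the weaker spectral assumption $g_{\infty}>\min_{i}\lambda_{1}^{i}$ replaces $g_{\infty}>\max_{i}\lambda_{1}^{i}$, compensated by the at most linear growth $\limsup_{s\to+\infty}g(s)/s<+\infty$. First I would fix an index $i_{0}$ with $\lambda_{1}^{i_{0}}=\min_{i=1,\ldots,m}\lambda_{1}^{i}$. Since $g_{\infty}>\lambda_{1}^{i_{0}}$, the first eigenvalue of $\varphi''+\lambda\, g_{\infty}a(x)\varphi=0$, $\varphi|_{\partial I_{i_{0}}}=0$, is strictly less than $1$, so Lemma~\ref{lemma_RJ} applies on the single interval $I_{i_{0}}$. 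Taking the same perturbation $v$ as in Theorem~\ref{MainTheorem} (e.g.\ the characteristic function of $A:=\bigcup_{i=1}^{m}I_{i}$) and noting $k(x,s):=a(x)g(s)+\alpha v(x)\geq a(x)g(s)$ for a.e.\ $x\in I_{i_{0}}$ and all $\alpha\geq0$, I obtain a constant $R_{I_{i_{0}}}>0$ such that every non-negative solution of \eqref{eq-2.10} satisfies $\max_{x\in I_{i_{0}}}u(x)<R_{I_{i_{0}}}$, uniformly in $\alpha\in\mathopen{[}0,\alpha_{0}\mathclose{]}$.

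The new ingredient is to upgrade this one-interval bound to a global a priori bound on $\|u\|_{\infty}$, and this is where the at most linear growth is used. From $g(0)=0$, the continuity of $g$ and $\limsup_{s\to+\infty}g(s)/s<+\infty$ I would fix constants $c_{1},c_{2}\geq0$ with $g(s)\leq c_{1}s+c_{2}$ for all $s\geq0$. As in the proof of Theorem~\ref{MainTheorem}, on every interval of $\mathopen{[}0,T\mathclose{]}\setminus A$ a non-negative solution is convex, so its global maximum is attained on $A$, and there $k(x,s)=a(x)g(s)\leq0$. Starting from the estimate $R_{I_{i_{0}}}$ on $I_{i_{0}}$ and exploiting $|u''|\leq|a(x)|(c_{1}u+c_{2})+\alpha_{0}v(x)$, whose $L^{1}$-coefficients are uniform in $\alpha\in\mathopen{[}0,\alpha_{0}\mathclose{]}$, a Gronwall-type continuation argument — precisely the one performed in \cite[Theorem~3.2]{FeZa-pre2014} — propagates the bound across the remaining gaps and positivity intervals and produces a global constant bounding $\|u\|_{\infty}$. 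I would then fix $R>r_{0}$ larger than this constant, which gives the first part of $(H_{R})$; the last requirement of $(H_{R})$ (no solution for $\alpha=\alpha_{0}$ with $0\leq u(x)\leq R$) follows from the very same integration of the equation over $\mathopen{[}0,T\mathclose{]}$ and the choice \eqref{eq-3.11} of $\alpha_{0}$ as in Theorem~\ref{MainTheorem}. With $(H_{r})$ and $(H_{R})$ established, Theorem~\ref{th-2.1} yields the positive solution.

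I expect the propagation step to be the main obstacle. A bound on $I_{i_{0}}$ controls nothing a priori on the other positivity intervals $I_{j}$, where $u$ is concave and could in principle bulge upward; it is the linear growth of $g$ that forbids blow-up over intervals of fixed finite length, while the convexity on the negativity regions both forces the global maximum onto $A$ and transmits the pointwise information across the gaps. The delicate aspect is to keep all the Gronwall constants uniform with respect to $\alpha\in\mathopen{[}0,\alpha_{0}\mathclose{]}$, so that $(H_{R})$ holds simultaneously along the whole homotopy; this is guaranteed because $\alpha_{0}$ is fixed and the perturbation $\alpha v$ stays bounded in $L^{1}(\mathopen{[}0,T\mathclose{]})$.
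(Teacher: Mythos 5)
Your proposal is correct and follows essentially the same route as the paper: the paper's own justification of Corollary~\ref{cor-3.4} is precisely the discussion preceding it, namely that $(H_{r})$ is unchanged, that $g_{\infty}>\min_{i}\lambda_{1}^{i}$ suffices to get the a priori bound of Lemma~\ref{lemma_RJ} on a single interval $I_{i_{0}}$, and that the boundedness of $g(s)/s$ at infinity then yields a global bound on $\|u\|_{\infty}$ by the Gronwall-type argument of \cite[Theorem~3.2]{FeZa-pre2014}, after which Theorem~\ref{th-2.1} applies. Your additional remarks on the uniformity of the constants in $\alpha\in\mathopen{[}0,\alpha_{0}\mathclose{]}$ and on the role of convexity on $\mathopen{[}0,T\mathclose{]}\setminus A$ are exactly the points implicit in that reference.
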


Actually, this result could be even improved with respect to assumption $(a_{1})$, in the sense that it would be sufficient only to find an interval
$J\subseteq \mathopen{[}0,T\mathclose{]}$ where $a\geq 0$ and $a\not\equiv 0$ and then
we can ignore completely the behavior of $a(x)$ on $\mathopen{[}0,T\mathclose{]}\setminus J$. If we know that $g_{\infty}$ is greater than the first
eigenvalue of the Dirichlet problem in $J$ (i.e.~$\varphi'' + \lambda a(x)\varphi = 0$, $\varphi|_{\partial J}=0$),
we get the upper bound $R_{J}$ as in Lemma~\ref{lemma_RJ} and hence a global upper bound via Gronwall's inequality.
Thus we can prove the following corollary which combines Corollary~\ref{cor-3.2} with Corollary~\ref{cor-3.4}.

\begin{corollary}\label{cor-3.5}
Assume there exists an interval $J\subseteq \mathopen{[}0,T\mathclose{]}$ where $a(x) \geq 0$ for a.e.~$x\in J$ and also
\begin{equation*}
\int_{0}^{T} a(x) ~\!dx < 0 < \int_{J} a(x) ~\!dx.
\end{equation*}
Suppose that $g(s)$ is regularly oscillating at zero, satisfying $(g_{1})$ and such that
\begin{equation*}
\lim_{s\to 0^{+}}\dfrac{g(s)}{s}=0 \quad \text{ and } \quad 0  < \liminf_{s\to +\infty}
\dfrac{g(s)}{s} \leq \limsup_{s\to +\infty} \dfrac{g(s)}{s} < +\infty.
\end{equation*}
Then there exists $\nu^{*}>0$ such that the boundary value problem
$({\mathscr{P}}_{\nu})$
has at least one positive solution for each $\nu > \nu^{*}$.
\end{corollary}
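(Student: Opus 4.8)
The plan is to deduce the statement from Theorem~\ref{th-2.1}, applied to the Carath\'{e}odory nonlinearity $f(x,s,\xi) := \nu\, a(x) g(s)$, in the same spirit as the proof of Theorem~\ref{MainTheorem}. Since $f$ does not depend on $\xi$ and $g$ is continuous with $g(0)=0$, conditions $(f_{1})$, $(f_{2})$, $(f_{3})$ are verified verbatim (taking $p=1$, $\phi\equiv 1$, $\psi(x)=\nu|a(x)|\max_{0\le s\le\eta}g(s)$). Let $\lambda_{1}^{J}$ be the first eigenvalue of $\varphi''+\lambda a(x)\varphi=0$, $\varphi|_{\partial J}=0$, which is positive because $a\ge 0$ and $\int_{J}a>0$ force $a\not\equiv 0$ on $J$, and set $\nu^{*}:=\lambda_{1}^{J}/g_{\infty}$, where $g_{\infty}:=\liminf_{s\to+\infty}g(s)/s$. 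Here the hypothesis $g_{\infty}>0$ enters: it guarantees $\nu g_{\infty}>\lambda_{1}^{J}$ for every $\nu>\nu^{*}$, and such a $\nu$ is fixed from now on.

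For $(H_{r})$ I would reproduce the first part of the proof of Theorem~\ref{MainTheorem}. The average condition is $\int_{0}^{T}f(x,s,0)\,dx=\nu g(s)\int_{0}^{T}a<0$ for $s>0$, by $(g_{1})$ and the assumption $\int_{0}^{T}a<0$. To exclude small positive solutions of $u''+\vartheta\nu a(x)g(u)=0$ with $\|u\|_{\infty}=r$, one integrates to get $\int_{0}^{T}a\,g(u_{n})=0$; the normalized functions $v_{n}=u_{n}/\|u_{n}\|_{\infty}$ solve $v_{n}''+\vartheta_{n}\nu a(g(u_{n})/u_{n})v_{n}=0$, so $\|v_{n}'\|_{\infty}\to 0$ and $v_{n}\to 1$ uniformly (the bounded factor $\nu$ is harmless), and the regular oscillation of $g$ at zero yields the same contradiction as before. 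This produces $r_{0}>0$ such that $(H_{r})$ holds for every $r\in\mathopen{]}0,r_{0}\mathclose{]}$.

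The substantive step is $(H_{R})$, where the absence of a sign condition on $a$ outside $J$ forces a departure from Theorem~\ref{MainTheorem}. Since $\nu g_{\infty}>\lambda_{1}^{J}$, Lemma~\ref{lemma_RJ} applied on $J$ with $q_{\infty}(x):=\nu a(x)g_{\infty}$ gives a constant $R_{J}$ bounding $\max_{x\in J}u(x)$ for every non-negative solution of the perturbed equation, using only $a\ge 0$ on $J$ and the lower bound $\liminf_{s\to\infty}f(x,s)/s\ge\nu a(x)g_{\infty}$. To upgrade this to a \emph{global} bound I would invoke the remaining hypothesis $\limsup_{s\to+\infty}g(s)/s<+\infty$: it yields constants $c_{1},c_{2}\ge 0$ with $g(s)\le c_{1}+c_{2}s$, so that $|u''(x)|\le\nu|a(x)|(c_{1}+c_{2}u(x))$. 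Starting from the values and derivatives of $u$ at $\partial J$ (controlled through $R_{J}$), a Gronwall estimate propagates the bound across $\mathopen{[}0,T\mathclose{]}$ and gives a constant $R(\nu)$ with $\|u\|_{\infty}<R(\nu)$; in the periodic case the two outer arcs are joined into a single interval via the boundary conditions. One then fixes $R>\max\{r_{0},R(\nu)\}$, a non-negative $v\not\equiv 0$ supported in $J$, and $\alpha_{0}>\nu\|a\|_{L^{1}}\max_{0\le s\le R}g(s)/\|v\|_{L^{1}}$. The first part of $(H_{R})$ holds since all admissible solutions satisfy $\|u\|_{\infty}\le R(\nu)<R$, and the second part follows by integrating $u''+\nu a g(u)+\alpha_{0}v=0$ over $\mathopen{[}0,T\mathclose{]}$ and comparing with the choice of $\alpha_{0}$, exactly as in Theorem~\ref{MainTheorem}.

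The main obstacle I anticipate lies precisely in this Gronwall step: a naive estimate makes $R(\nu)$ depend on $\alpha_{0}$, while $\alpha_{0}$ is chosen in terms of $R>R(\nu)$, so the exponential factor $\exp(\nu c_{2}\|a\|_{L^{1}})$ could create a circular, possibly inconsistent, dependence. I would break this by supporting $v$ in the \emph{interior} of $J$: then $v$ vanishes near $\partial J$, the bounds on $u$ and $u'$ at the endpoints of $J$ (and hence $R(\nu)$) are independent of $\alpha_{0}$, and the choices of $R$ and $\alpha_{0}$ can be made consistently in this order. With $(H_{r})$ and $(H_{R})$ established, Theorem~\ref{th-2.1} provides a positive solution of $({\mathscr{P}}_{\nu})$ for every $\nu>\nu^{*}$, which completes the argument and exhibits the corollary as the common refinement of Corollary~\ref{cor-3.2} and Corollary~\ref{cor-3.4}.
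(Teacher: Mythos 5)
Your proposal follows essentially the same route as the paper: the corollary is justified there exactly by combining the $(H_{r})$ verification of Theorem~\ref{MainTheorem} (regular oscillation at zero plus the negative average of $a$) with Lemma~\ref{lemma_RJ} applied on the single interval $J$ --- made applicable for $\nu > \nu^{*} = \lambda_{1}^{J}/g_{\infty}$, as in Corollary~\ref{cor-3.2} --- and a Gronwall argument exploiting the linear growth of $g$ at infinity to upgrade the bound on $J$ to a global one, as in Corollary~\ref{cor-3.4}. Your extra care about the possible circular dependence between $R(\nu)$ and $\alpha_{0}$, resolved by supporting $v$ in the interior of $J$ so that the Gronwall propagation never crosses $\operatorname{supp} v$ and $R(\nu)$ stays independent of $\alpha_{0}$, is a genuine detail that the paper's sketch glosses over, and you handle it correctly.
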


\begin{remark}[\textit{A comment on the regularly oscillating condition}]\label{rem-3.2}
The condition of regularly oscillation at zero required on $g(s)$ is useful in order to
conclude the verification of $(H_{r})$ in Theorem~\ref{MainTheorem}.
Nevertheless, there is a disadvantage in assuming such a condition, as it does not allow to consider functions as
\begin{equation}\label{eq-exp}
g (s) = s^{\gamma} \exp(-1/s) \; \text{ for } \; s > 0 \;\; (\gamma > 1), \quad g(0) = 0,
\end{equation}
which are not regularly oscillating at zero.

With this respect we observe that, from a careful reading of the first part of the proof of Theorem~\ref{MainTheorem},
the key point is to show that $g(r_{n}\omega_{n})/g(r_{n}) \to 1$, for $r_{n}\to 0^{+}$ and $\omega_{n} = v(x_{n}) \to 1$
(for a suitable choice of $x_{n}\in \mathopen{[}0,T\mathclose{]}$).
In our proof, the sequence $\omega_{n}$ is not an arbitrary sequence tending to $1$, since $0 < \omega_{n} <1$,
and, moreover, from \eqref{eq-3.7} we can easily provide the estimate
\begin{equation*}
0 \leq  1 - \omega_{n} \leq C \sup_{0 < s \leq r_{n}} \dfrac{g(s)}{s}
\end{equation*}
(for $C > 0$ a suitable constant independent on $r_{n}$ and $\omega_{n}$).
Therefore, the faster ${g(r_{n})}/{r_{n}}$ tends to zero, the more $\omega_{n}$ tends to one.

Using this observation, we can apply our result also to some not regularly oscillating functions $g(s)$, provided that they
tend to zero sufficiently fast. In this manner, for instance, Corollary~\ref{cor-3.1} holds also for a
function $g(s)$ as in \eqref{eq-exp} (the easy verification is omitted).

Another way to avoid the hypothesis of regular oscillation at the origin is described in Theorem~\ref{MainTheorem2}.
$\hfill\lhd$
\end{remark}

\medskip

A variant of Theorem~\ref{MainTheorem} is the following result. Basically, we replace
the computations for the verification of $(H_{r})$ given in the proof of Theorem~\ref{MainTheorem}
with a different argument which is essentially inspired by the approach in~\cite{BoZa-2013}.

\begin{theorem}\label{MainTheorem2}
Let $g \colon {\mathbb{R}}^{+}\to{\mathbb{R}}^{+}$ be a continuous function satisfying $(g_{1})$ and $(g_{2})$.
Let $a\colon \mathopen{[}0,T\mathclose{]}\to {\mathbb{R}}$ be a measurable function satisfying $(a_{1})$ and $(a_{2})$.
Suppose also that $g(s)$ is continuously differentiable on a right neighborhood $\mathopen{[}0,\varepsilon_{0}\mathclose{[}$
of $s=0$.
Then problem $({\mathscr{P}})$ has at least one positive solution.
\end{theorem}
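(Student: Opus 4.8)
The plan is to run the same machinery as in Theorem~\ref{MainTheorem}, changing only the verification of $(H_{r})$. First I set $f(x,s,\xi)=f(x,s):=a(x)g(s)$, which is $L^{1}$-Carath\'eodory and satisfies $(f_{1})$ (from $g(0)=0$), $(f_{2})$ (since $g(s)/s$ is bounded near $s=0$ and $a\in L^{1}$) and $(f_{3})$ (trivially, as $f$ does not depend on $\xi$), so that Theorem~\ref{th-2.1} applies. The verification of $(H_{R})$ is word-for-word as in Theorem~\ref{MainTheorem}: the hypothesis $g_{\infty}>\max_{i}\lambda_{1}^{i}$ together with Lemma~\ref{lemma_RJ} on each $I_{i}$ produces the a priori bounds $R_{I_{i}}$, and the same choice of $v=\mathbbm{1}_{A}$, of $R\geq\max_{i}R_{I_{i}}$ and of $\alpha_{0}$ as in \eqref{eq-3.11} gives $(H_{R})$. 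Thus the whole novelty lies in the behaviour near zero.

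For $(H_{r})$ I argue by contradiction as before: I assume positive solutions $u_{n}$ of \eqref{eq-vartheta-n} with $\|u_{n}\|_{\infty}=r_{n}\to0$ and $\vartheta_{n}\in\mathopen{]}0,1\mathclose{]}$. Normalizing $v_{n}:=u_{n}/r_{n}$ and repeating the steps leading to \eqref{eq-3.6}--\eqref{eq-3.8}, I obtain $v_{n}\to1$ and $v_{n}'\to0$ uniformly, together with the integral relation \eqref{eq-3.4}, namely $\int_{0}^{T}a(x)g(u_{n})\,dx=0$. Here the regular-oscillation step is replaced by a Taylor expansion allowed by the $\mathcal{C}^{1}$ regularity of $g$ near $0$. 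Writing, by the mean value theorem, $g(u_{n}(x))-g(r_{n})=g'(\sigma_{n}(x))\bigl(u_{n}(x)-r_{n}\bigr)$ with $\sigma_{n}(x)$ between $u_{n}(x)$ and $r_{n}$, and inserting this into \eqref{eq-3.4}, I get
\[
-\,T\bar{a}\,g(r_{n})=\int_{0}^{T} a(x)\,g'(\sigma_{n}(x))\bigl(u_{n}(x)-r_{n}\bigr)\,dx .
\]
Dividing by $g(r_{n})>0$ and estimating the right-hand side will produce the contradiction with $(a_{2})$.

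The crucial point, and the main obstacle, is that dividing by $g(r_{n})$ introduces the factor $r_{n}/g(r_{n})\to+\infty$ (since $g(s)/s\to0$), so the crude bounds of Theorem~\ref{MainTheorem} are too lossy. The remedy, in the spirit of \cite{BoZa-2013}, is to exploit that $u_{n}$ takes values only in the shrinking interval $[m_{n},r_{n}]$, where $m_{n}:=\min u_{n}=r_{n}\min v_{n}$ and $m_{n}/r_{n}\to1$. On this interval I use the sharpened form of \eqref{eq-3.7}, $\|v_{n}'\|_{\infty}\leq\|a\|_{L^{1}}K_{n}^{*}$ with $K_{n}^{*}:=\max_{s\in[m_{n},r_{n}]}g(s)/s$, which yields $|u_{n}(x)-r_{n}|\leq r_{n}\,T\|a\|_{L^{1}}K_{n}^{*}$, together with $|g'(\sigma_{n}(x))|\leq\mu_{n}^{*}:=\max_{s\in[m_{n},r_{n}]}|g'(s)|$. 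Since $g'(0)=\lim_{s\to0^{+}}g(s)/s=0$ and $g'$ is continuous, $\mu_{n}^{*}\to0$, while the product $K_{n}^{*}\,r_{n}/g(r_{n})$ stays bounded: on the narrow range $[m_{n},r_{n}]$ one has $g(s)/g(r_{n})\leq 1+\mu_{n}^{*}(r_{n}-m_{n})/g(r_{n})$ and $r_{n}/s\leq 1/\min v_{n}$, and feeding these back gives a self-improving estimate that forces $K_{n}^{*}\,r_{n}/g(r_{n})\to1$. Hence the displayed right-hand side is bounded by $\|a\|_{L^{1}}^{2}\,\mu_{n}^{*}\,\bigl(K_{n}^{*}\,r_{n}/g(r_{n})\bigr)\to0$, contradicting $0<-\bar{a}$. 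Closing this requires absorbing the mildly self-referential small term just mentioned, which is the delicate bookkeeping borrowed from \cite{BoZa-2013}; once $(H_{r})$ is secured for all sufficiently small $r>0$ (hence for some $r<R$), the conclusion follows from Theorem~\ref{th-2.1}.
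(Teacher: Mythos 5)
Your proof is correct, but the route you take through $(H_{r})$ is genuinely different from the paper's. The paper abandons the normalization $v_{n}=u_{n}/\|u_{n}\|_{\infty}$ altogether: it sets $z_{n}:=u_{n}'/g(u_{n})$, so that the equation becomes the Riccati-type identity $z_{n}'+g'(u_{n})z_{n}^{2}=-\vartheta_{n}a(x)$ with $z_{n}(0)=z_{n}(T)$ and $z_{n}(t_{n}^{*})=0$ for some $t_{n}^{*}$; a maximality argument yields the a priori bound $\|z_{n}\|_{\infty}\leq\vartheta_{n}M$ for a fixed $M>\|a\|_{L^{1}}$ once $\max_{0\leq s\leq r_{n}}|g'(s)|$ is small, and integrating the identity over $\mathopen{[}0,T\mathclose{]}$ gives $0<-\bar{a}\leq M^{2}\max_{0\leq s\leq r_{n}}|g'(s)|\to0$, the contradiction. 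You instead keep the entire blow-up framework of Theorem~\ref{MainTheorem} --- correctly observing that \eqref{eq-3.6}--\eqref{eq-3.8} use only $\lim_{s\to0^{+}}g(s)/s=0$ and not regular oscillation --- and replace only the final step by the mean value theorem plus a bootstrap for $\beta_{n}:=K_{n}^{*}r_{n}/g(r_{n})$. Your bootstrap does close: each $\beta_{n}$ is finite (because $m_{n}>0$), so the inequality $\beta_{n}\leq(\min v_{n})^{-1}\bigl(1+\mu_{n}^{*}T\|a\|_{L^{1}}\beta_{n}\bigr)$, together with $\min v_{n}\to1$ and $\mu_{n}^{*}\to0$, lets you absorb the $\beta_{n}$-term on the right, giving $\beta_{n}$ bounded (in fact $\to1$) and hence $0<-\bar{a}\leq\|a\|_{L^{1}}^{2}\mu_{n}^{*}\beta_{n}\to0$, a contradiction. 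In effect you have carried out, for the $\mathcal{C}^{1}$ case, exactly the program sketched in the paper's Remark~\ref{rem-3.2}: the sequence $\omega_{n}$ there is not arbitrary but satisfies $1-\omega_{n}\leq C\sup_{0<s\leq r_{n}}g(s)/s$, and $\mathcal{C}^{1}$ regularity with $g'(0)=0$ suffices to force $g(r_{n}\omega_{n})/g(r_{n})\to1$ along such constrained sequences. What your approach buys is economy and transparency: only one step of the Theorem~\ref{MainTheorem} proof changes, and one sees precisely why smoothness can substitute for regular oscillation. What the paper's Riccati substitution buys is a shorter, self-contained estimate with no self-referential bookkeeping, plus reusability: essentially the same device is recycled to prove the nonexistence result, Proposition~\ref{prop-3.1}. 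If you write your version up, do make explicit that the mean value theorem is applied only for $n$ large enough that $r_{n}<\varepsilon_{0}$, and state the finiteness of $\beta_{n}$ before the absorption step.
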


\begin{proof}
As in the proof of Theorem~\ref{MainTheorem}, we enter in the setting of Theorem~\ref{th-2.1} by defining
\begin{equation*}
f(x,s,\xi) = f(x,s) := a(x)g(s).
\end{equation*}

\medskip

\noindent
\textit{Verification of $(H_{r})$.}
First of all, we observe that $(g_{1})$ and $(a_{2})$ imply that
\begin{equation}\label{eq-fs2}
\int_{0}^{T} f(x,s,0)~\!dx < 0, \quad \forall \, s > 0.
\end{equation}
We claim that there exists $r_{0}\in \mathopen{]}0,\varepsilon_{0}\mathclose{[}$
such that for all $0<r\leq r_{0}$ and
for all $\vartheta\in\mathopen{]}0,1\mathclose{]}$ there are no solutions $u(x)$ of \eqref{eq-2.9}
such that $u(x)>0$ on $\mathopen{[}0,T\mathclose{]}$ and $\|u\|_{\infty}=r$.

By contradiction, suppose the claim is not true. Then for all
$n\in{\mathbb{N}}$ there exist $0<r_{n} <1/n$,
$\vartheta_{n}\in\mathopen{]}0,1\mathclose{]}$ and $u_{n}(x)$
solution of \eqref{eq-vartheta-n} such that $u_{n}(x)>0$ on
$\mathopen{[}0,T\mathclose{]}$ and $\|u_{n}\|_{\infty} = r_{n}$. By the first condition in $(g_{2})$, note
also that
\begin{equation*}
\lim_{n\to\infty} g'(u_{n}(x)) = 0, \quad \text{uniformly on } \mathopen{[}0,T\mathclose{]}.
\end{equation*}
Using the identity
\begin{equation*}
\dfrac{u''}{g(u)} = \dfrac{d}{dx}\biggl(\dfrac{u'}{g(u)}\biggr) + g'(u) \biggl( \dfrac{u'}{g(u)} \biggr)^{\!2},
\end{equation*}
for $u = u_{n}$, and setting
\begin{equation*}
z_{n}(x):=  \dfrac{u_{n}'(x)}{g(u_{n}(x))}, \quad x \in \mathopen{[}0,T\mathclose{]},
\end{equation*}
we obtain the following relation
\begin{equation}\label{eqz-eq}
z_{n}'(x) + g'(u_{n}(x)) z_{n}^{2}(x) = - \vartheta_{n}a(x), \quad \text{for a.e. } x\in \mathopen{[}0,T\mathclose{]}.
\end{equation}
The boundary conditions (of Neumann or periodic type) on $u_{n}(x)$ imply that
\begin{equation}\label{eqz-BC}
z_{n}(0) = z_{n}(T) \quad \text{ and } \quad \exists \, t^{*}_{n}\in \mathopen{[}0,T\mathclose{]} \colon z_{n}(t^{*}_{n}) = 0
\end{equation}
(obviously, we can take $t^{*}_{n}= 0$ in the case of the Neumann boundary conditions, while the existence of such a point
in the periodic case follows from Rolle's theorem).

We fix a positive constant $M > \|a\|_{L^{1}}$ and then a constant $\delta$ with
\begin{equation}\label{eq-delta}
0 < \delta < \dfrac{M - \|a\|_{L^{1}}}{T M^{2}}.
\end{equation}
By the continuity of $g'(s)$ on $\mathopen{[}0,\varepsilon_{0}\mathclose{[}$ and $g'(0) = 0$
(which corresponds to the first condition in $(g_{2})$),
we find $\varepsilon\in \mathopen{]}0,\varepsilon_{0}\mathclose{[}$ such that
\begin{equation*}
|g'(s)|\leq \delta, \quad \forall \, 0 \leq s \leq \varepsilon.
\end{equation*}
Let $n > 1/\varepsilon$. In this case, we have that $0 < u_{n}(x) < \varepsilon$ on $\mathopen{[}0,T\mathclose{]}$ and we claim that
\begin{equation}\label{eqz-ineq}
\|z_{n}\|_{\infty} \leq \vartheta_{n} M.
\end{equation}
Indeed, if by contradiction we suppose that \eqref{eqz-ineq} is not true (for some $n > 1/\varepsilon$), then, using the fact that $z_{n}(x)$
vanishes at some point $t^{*}_{n}$ of $\mathopen{[}0,T\mathclose{]}$, we can find a maximal interval $J_{n}$
of the form $\mathopen{[}t^{*}_{n},\tau_{n}\mathclose{]}$ or $\mathopen{[}\tau_{n},t^{*}_{n}\mathclose{]}$
such that $|z_{n}(x)| \leq \vartheta_{n} M$ for all $x\in J_{n}$ and $|z_{n}(x)| > \vartheta_{n} M$ for some $x\not\in J_{n}$,
or, more precisely, with $\tau_{n} < x \leq T$ or $0 \leq x < \tau_{n}$, respectively.
By the maximality of the interval $J_{n}$, we also know that $|z_{n}(\tau_{n})| = \vartheta_{n} M$.

Integrating \eqref{eqz-eq} on $J_{n}$ and passing to the absolute value, we obtain
\begin{equation*}
\begin{aligned}
\vartheta_{n} M &= |z_{n}(\tau_{n})| = |z_{n}(\tau_{n}) - z_{n}(t^{*}_{n})| \\
& \leq  \Bigl{|} \int_{J_{n}} g '(u_{n}(x)) z_{n}^{2}(x) ~\!dx \Bigr{|} + \vartheta_{n}\|a\|_{L^{1}}\\
& \leq \delta \vartheta_{n}^{2} M^{2} |\tau_{n} - t^{*}_{n}| + \vartheta_{n}\|a\|_{L^{1}} \\
& \leq \vartheta_{n} \bigl(\delta  M^{2} T + \|a\|_{L^{1}} \bigr)
\end{aligned}
\end{equation*}
(recall that $0 < \vartheta_{n} \leq 1$). Dividing the above inequality by $\vartheta_{n} > 0$, we find a contradiction
with the choice of $\delta$ in \eqref{eq-delta}. In this manner, we have verified that \eqref{eqz-ineq} is true.

Now, integrating \eqref{eqz-eq} on $\mathopen{[}0,T\mathclose{]}$, recalling that $z_{n}(T) - z_{n}(0) = 0$ (according to \eqref{eqz-BC})
and using \eqref{eqz-ineq}, we obtain that
\begin{equation*}
\begin{aligned}
- \vartheta_{n} \int_{0}^{T} a(x)~\!dx  &= \int_{0}^{T} g '(u_{n}(x)) z_{n}^{2}(x) ~\!dx\\
&\leq T \vartheta_{n}^{2} M^{2} \max_{0\leq s \leq r_{n}}|g'(s)|\\
&\leq \vartheta_{n} T M^{2} \max_{0\leq s \leq r_{n}}|g'(s)|
\end{aligned}
\end{equation*}
holds for every $n > 1/\varepsilon$. From this,
\begin{equation}\label{eq-3.18}
0<-\bar{a}\leq M^{2} \max_{0\leq s \leq r_{n}}|g'(s)|
\end{equation}
follows. Using the continuity of $g'(s)$ at $s=0^{+}$, we get a contradiction, as $n\to \infty$.

The claim is thus proved and, recalling also \eqref{eq-fs2}, we have that $(H_{r})$ holds
for any $r\in \mathopen{]}0,r_{0}\mathclose{]}$.

\medskip

\noindent
\textit{Verification of $(H_{R})$.} This has been already checked  in the second part of the proof of Theorem~\ref{MainTheorem}.
No change is needed.

\medskip

\noindent
As last step, we conclude exactly as in the proof of Theorem~\ref{MainTheorem},
via Theorem~\ref{th-2.1}.
\end{proof}

{} From Theorem~\ref{MainTheorem2}, we can derive the same corollaries as above in which the
condition of regularly oscillation at zero of $g(s)$ is systematically replaced by the smoothness of $g(s)$
on a right neighborhood $\mathopen{[}0,\varepsilon_{0}\mathclose{[}$ of zero. In particular, an obvious
improvement of Corollary~\ref{cor-3.3} is the following.

\begin{corollary}\label{cor-3.6}
Let $g \colon {\mathbb{R}}^{+}\to{\mathbb{R}}^{+}$ be a continuously differentiable function satisfying $(g_{1})$, such that
$g'(s) > 0$ for all $s > 0$ and
\begin{equation*}
\lim_{s\to 0^{+}}\dfrac{g(s)}{s}=0, \qquad \lim_{s\to +\infty}\dfrac{g(s)}{s}=+\infty.
\end{equation*}
Let $a\colon \mathopen{[}0,T\mathclose{]}\to {\mathbb{R}}$ be a measurable function satisfying $(a_{1})$.
Then problem $({\mathscr{P}})$ has at least one positive solution if and only if $(a_{2})$ holds.
\end{corollary}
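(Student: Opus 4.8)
The plan is to prove the two implications separately, exploiting the symmetry between the \emph{sufficiency} of $(a_{2})$, which is a direct application of the existence machinery already developed (Theorem~\ref{MainTheorem2}), and its \emph{necessity}, which rests on the integration-by-parts identity already recorded in the Introduction. Since the hypotheses here are exactly those of Theorem~\ref{MainTheorem2} strengthened by the global monotonicity $g'(s)>0$, the sufficiency will be essentially a citation, and the monotonicity is precisely what powers the necessity.

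For the sufficiency I would check that $g$ and $a$ meet the requirements of Theorem~\ref{MainTheorem2}. As $g$ is continuously differentiable on all of $\mathbb{R}^{+}$, it is in particular $C^{1}$ on a right neighborhood $\mathopen{[}0,\varepsilon_{0}\mathclose{[}$ of $s=0$; condition $(g_{1})$ is assumed, and $(a_{1})$, $(a_{2})$ are in force. It then remains only to verify $(g_{2})$: the requirement $\lim_{s\to 0^{+}} g(s)/s = 0$ is one of the stated hypotheses, while $\lim_{s\to +\infty} g(s)/s = +\infty$ gives $g_{\infty}=+\infty$, which trivially exceeds $\max_{i}\lambda_{1}^{i}$. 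Hence Theorem~\ref{MainTheorem2} applies and produces a positive solution whenever $(a_{2})$ holds.

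For the necessity I would assume that $u$ is a positive solution of $({\mathscr{P}})$ and deduce $\bar{a}<0$. The first step is to observe that $u$ cannot be constant: a constant positive solution $u\equiv c>0$ would force $a(x)g(c)=0$ for a.e.~$x$, hence $a\equiv 0$ a.e.~by $(g_{1})$, contradicting the part of $(a_{1})$ that requires $a\not\equiv 0$ on each $I_{i}$. Since $u(x)>0$ on the compact interval $\mathopen{[}0,T\mathclose{]}$, $u$ is bounded away from zero, so $g(u(x))$ is continuous and bounded away from zero; consequently $u'/g(u)$ is absolutely continuous and we may divide the equation by $g(u(x))$ and integrate by parts. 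Both the Neumann and the periodic conditions make the boundary term $\mathopen{[}u'/g(u)\mathclose{]}_{0}^{T}$ vanish, yielding
$$\int_{0}^{T} g'(u(x))\biggl(\dfrac{u'(x)}{g(u(x))}\biggr)^{\!2}\,dx = -\int_{0}^{T} a(x)\,dx.$$
Because $g'(s)>0$ for every $s>0$ and $u(x)>0$, the integrand on the left is non-negative, and since $u$ is non-constant (so $u'\not\equiv 0$ and, being continuous, $u'$ is nonzero on a set of positive measure) the left-hand side is strictly positive. Therefore $\int_{0}^{T} a(x)\,dx<0$, that is, $(a_{2})$ holds.

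The only point requiring genuine care—rather than the routine integration by parts—is the exclusion of constant solutions, which is exactly where $(a_{1})$ (nontriviality of $a$ on the positivity intervals) together with $(g_{1})$ enters; without it the identity would deliver only $\bar{a}\le 0$. Everything else reduces either to the displayed identity from the Introduction or to a verbatim appeal to Theorem~\ref{MainTheorem2}.
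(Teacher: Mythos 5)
Your proof is correct and takes essentially the same route as the paper: sufficiency by direct appeal to Theorem~\ref{MainTheorem2} (whose hypotheses your $C^{1}$ assumption and the two limits clearly imply, since $g_{\infty}=+\infty$), and necessity via the integration-by-parts identity from the Introduction combined with $g'>0$. Your explicit exclusion of constant solutions---needed to upgrade $\bar{a}\leq 0$ to the strict inequality $(a_{2})$, and derived correctly from $(a_{1})$ and $(g_{1})$---is a detail the paper leaves implicit, and supplying it is the right thing to do.
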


{} For the Neumann problem this result improves \cite[\S~3, Corollary~1]{BoZa-2013} to a more general class of weight functions $a(x)$.
It also  extends such a result to the periodic case.

\begin{remark}[\textit{A comparison between different conditions at zero}]\label{rem-3.3}
In Theorem~\ref{MainTheorem} and Theorem~\ref{MainTheorem2} we have two different conditions that
are required on $g(s)$ as $s\to 0^{+}$. It can be interesting to provide examples in which one of
the two results applies, while for the other the conditions on $g(s)$ are not fulfilled.
For this discussion, we confine ourselves only to the behavior of $g(s)$ on a right
neighborhood $\mathopen{[}0,1\mathclose{]}$ of zero and we do not care about $a(x)$ or the behavior of $g(s)$ as $s\to +\infty$.

Take any function $\sigma \colon \mathopen{[}0,1\mathclose{]}\to {\mathbb{R}}^{+}_{0}$ which is continuous but not differentiable
(for instance, one could even choose a nowhere differentiable function of Weierstrass type) and define
\begin{equation*}
g(s) = \sigma(s) s^{\gamma}, \quad \gamma > 1.
\end{equation*}
Such a function $g(s)$ is regularly oscillating at zero (note that $\sigma(0) > 0$) and it fits for
Theorem~\ref{MainTheorem}, but it is not suitable for Theorem~\ref{MainTheorem2}.

As second example, we consider a function as
\begin{equation*}
g(s) =  s^{\gamma} \sin^{2}(1/s) + s^{\beta}, \; \text{ for } \; s \in \mathopen{]}0,1\mathclose{]} \;\; (\beta > \gamma > 2), \quad g(0) = 0.
\end{equation*}
Such a function $g(s)$ is continuously differentiable on $\mathopen{[}0,1\mathclose{]}$ and it fits for
Theorem~\ref{MainTheorem2}, but it is not suitable for Theorem~\ref{MainTheorem} since $g(s)$ is not regularly oscillating at zero.

Both the above examples can be easily generalized in order to construct broad classes of nonlinearities
where only one of the two theorems applies.
$\hfill\lhd$
\end{remark}

\medskip

We end this section by presenting a variant of Corollary~\ref{cor-3.5} in the smooth case and
observing that the argument employed in the proof of Theorem~\ref{MainTheorem2} can be used to provide a
\textit{nonexistence result} for positive solutions when $g(s)$ is smooth on ${\mathbb{R}}^{+}$ and with sufficiently small derivative.

\begin{corollary}\label{cor-3.7}
Assume there exists an interval $J\subseteq \mathopen{[}0,T\mathclose{]}$ where $a(x) \geq 0$ for a.e.~$x\in J$ and also
\begin{equation*}
\int_{0}^{T} a(x) ~\!dx < 0 < \int_{J} a(x) ~\!dx.
\end{equation*}
Suppose also that $g(s)$ is a continuously differentiable function satisfying $(g_{1})$ and such that
\begin{equation*}
g'(0) = 0 \quad \text{ and } \quad 0  < \liminf_{s\to +\infty} g'(s) \leq \limsup_{s\to +\infty} g'(s) < +\infty.
\end{equation*}
Then there exists $\nu^{*}>0$ such that the boundary value problem
$({\mathscr{P}}_{\nu})$
has at least one positive solution for each $\nu > \nu^{*}$.
\end{corollary}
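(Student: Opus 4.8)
The plan is to reduce the hypotheses on $g'$ to the growth hypotheses on $g(s)/s$ used in Corollary~\ref{cor-3.5}, and then to run the lower-bound argument of Theorem~\ref{MainTheorem2} together with the single-interval upper-bound argument of Corollary~\ref{cor-3.4}. First I would record two elementary consequences of the $\mathcal{C}^{1}$-assumptions. Writing $g(s)=\int_{0}^{s} g'(t)\,dt$ (recall $g(0)=0$), the continuity of $g'$ at $0$ together with $g'(0)=0$ gives $\lim_{s\to 0^{+}} g(s)/s = \lim_{s\to 0^{+}}\frac{1}{s}\int_{0}^{s} g'(t)\,dt = g'(0)=0$, while the standard Ces\`aro estimate
\[
0 < \liminf_{s\to+\infty} g'(s) \leq \liminf_{s\to+\infty}\dfrac{g(s)}{s} \leq \limsup_{s\to+\infty}\dfrac{g(s)}{s} \leq \limsup_{s\to+\infty} g'(s) < +\infty
\]
shows that $g_{\infty}:=\liminf_{s\to+\infty} g(s)/s$ is finite and strictly positive and that $g$ has at most linear growth at infinity. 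Hence $g$ satisfies exactly the behavior at zero and at infinity required in Corollary~\ref{cor-3.5}; the only missing ingredient is regular oscillation at zero, which will be bypassed using smoothness.

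Next I would fix the threshold and enter the abstract framework. Let $\lambda_{1}^{J}>0$ be the first eigenvalue of $\varphi'' + \lambda a(x)\varphi = 0$, $\varphi|_{\partial J}=0$ (positive since $a\geq 0$, $a\not\equiv 0$ on $J$, because $\int_{J} a>0$), and set $\nu^{*}:=\lambda_{1}^{J}/g_{\infty}$. For $\nu>\nu^{*}$ one has $\nu g_{\infty}>\lambda_{1}^{J}$, i.e.~the first eigenvalue of $\varphi'' + \lambda\,\nu g_{\infty} a(x)\varphi = 0$ on $J$ is strictly less than $1$. I would then apply Theorem~\ref{th-2.1} with $f(x,s):=\nu a(x) g(s)$, noting that $(f_{1})$, $(f_{2})$, $(f_{3})$ hold exactly as in the proof of Theorem~\ref{MainTheorem} (here $(f_{2})$ uses that $g(s)/s$ is bounded near $0$, which follows from $g\in\mathcal{C}^{1}$ and $g(0)=0$) and that $\int_{0}^{T}\nu a(x)\,dx=\nu\int_{0}^{T}a(x)\,dx<0$.

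For $(H_{r})$ I would reproduce the lower-bound argument of Theorem~\ref{MainTheorem2}: setting $z_{n}:=u_{n}'/g(u_{n})$ for a hypothetical family of small positive solutions, I derive the Riccati-type identity $z_{n}'+g'(u_{n})z_{n}^{2}=-\vartheta_{n}\,\nu a$, use $z_{n}(0)=z_{n}(T)$ and a vanishing point of $z_{n}$ to get $\|z_{n}\|_{\infty}\leq\vartheta_{n}M$, and integrate over $\mathopen[0,T\mathclose]$ to reach $0<-\nu\bar{a}\leq M^{2}\max_{0\leq s\leq r_{n}}|g'(s)|$, which contradicts $g'(0)=0$ as $r_{n}\to 0^{+}$. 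This step uses only the $\mathcal{C}^{1}$-regularity of $g$, the condition $g'(0)=0$, and $\int_{0}^{T}\nu a<0$, so assumption $(a_{1})$ is not needed; it produces an $r_{0}>0$ for which $(H_{r})$ holds on $\mathopen]0,r_{0}\mathclose]$. For $(H_{R})$ I would follow Corollary~\ref{cor-3.4}: take $v:=\chi_{J}$ and apply Lemma~\ref{lemma_RJ} on the single interval $J$ (legitimate since $\nu g_{\infty}>\lambda_{1}^{J}$) to bound $\max_{J}u$ by some $R_{J}$ for every non-negative solution of $u''+\nu a(x)g(u)+\alpha v=0$ with $\alpha\in\mathopen[0,\alpha_{0}\mathclose]$; the at-most-linear growth $\limsup_{s\to+\infty}g(s)/s<+\infty$ then upgrades this local bound to a global bound $R$ via Gronwall's inequality on $\mathopen[0,T\mathclose]\setminus J$, exactly as in \cite[Theorem~3.2]{FeZa-pre2014}. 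Choosing $\alpha_{0}$ large enough so that integrating the equation forces a contradiction (as in the final step of the $(H_{R})$ verification in the proof of Theorem~\ref{MainTheorem}) supplies the remaining part of $(H_{R})$, and Theorem~\ref{th-2.1} then yields the desired positive solution.

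The main obstacle I anticipate is the $(H_{R})$ step. Unlike in Theorem~\ref{MainTheorem}, assumption $(a_{1})$ is dropped and only one interval $J$ of non-negativity is available, so a priori bounds can be extracted on $J$ alone; passing from this local bound to a global one is precisely where the finiteness of $\limsup_{s\to+\infty}g'(s)$ (hence the linear growth of $g$) becomes essential, making the Gronwall argument, rather than Lemma~\ref{lemma_RJ} by itself, indispensable.
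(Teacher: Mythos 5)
Your proof is correct and takes essentially the same route the paper intends: Corollary~\ref{cor-3.7} is presented there precisely as the smooth-case variant of Corollary~\ref{cor-3.5}, obtained by translating the hypotheses on $g'$ into hypotheses on $g(s)/s$ via the generalized de l'H\^{o}pital inequality, running the Riccati-type $(H_{r})$ argument of Theorem~\ref{MainTheorem2} (which needs only the negative average and smoothness near zero, not $(a_{1})$), and getting $(H_{R})$ from Lemma~\ref{lemma_RJ} on the single interval $J$ followed by the Gronwall upgrade available because $g$ has at most linear growth, exactly as you propose. Your closing remark also correctly isolates the one delicate point --- passing from the local bound on $J$ to a global bound uniform in $\alpha$ --- which both you and the paper delegate to the argument of \cite{FeZa-pre2014}.
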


Clearly, Theorem~\ref{th-1.3} in the Introduction is a direct consequence of this result, using the
generalized de l'H\^{o}pital's rule:
\begin{equation*}
\liminf_{s\to +\infty} g'(s) \leq \liminf_{s\to +\infty} \dfrac{g(s)}{s} \leq \limsup_{s\to +\infty} \dfrac{g(s)}{s} \leq \limsup_{s\to +\infty} g'(s).
\end{equation*}

\begin{proposition}\label{prop-3.1}
Let $g \colon {\mathbb{R}}^{+}_{0}\to{\mathbb{R}}^{+}_{0}$ be a continuously differentiable function
with bounded derivative on ${\mathbb{R}}^{+}_{0}$. Let $a \in L^{1}(\mathopen{[}0,T\mathclose{]})$ satisfy condition $(a_{2})$.
Then there exists ${\nu}_{*} > 0$ such that
the boundary value problem
$({\mathscr{P}}_{\nu})$
has no positive solutions for each $0 <\nu < {\nu}_{*}$.
\end{proposition}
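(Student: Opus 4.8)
The plan is to argue by contradiction, showing that the very existence of a positive solution to $({\mathscr{P}}_{\nu})$ forces $\nu$ to be bounded below by a positive constant depending only on $g$, $a$ and $T$. The main tool is the Riccati-type reformulation already exploited in the verification of $(H_{r})$ in Theorem~\ref{MainTheorem2}. Suppose $u$ is a positive solution; since $u(x)>0$ on $[0,T]$ we have $g(u(x))>0$, so we may divide the equation $u''=-\nu a(x) g(u)$ by $g(u)$ and use the identity
\begin{equation*}
\frac{u''}{g(u)} = \frac{d}{dx}\biggl(\frac{u'}{g(u)}\biggr) + g'(u)\biggl(\frac{u'}{g(u)}\biggr)^{\!2}.
\end{equation*}
Setting $z(x):=u'(x)/g(u(x))$, this gives the first-order relation $z' + g'(u) z^2 = -\nu a$ a.e.\ on $[0,T]$. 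The Neumann or periodic boundary conditions yield $z(0)=z(T)$ together with a point $t^*\in[0,T]$ where $z(t^*)=0$ (one takes $t^*=0$ in the Neumann case, and invokes Rolle's theorem in the periodic case).

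Let $L:=\sup_{s>0}|g'(s)|<\infty$. If $L=0$, then integrating the relation over $[0,T]$ and using $z(0)=z(T)$ gives $0 = -\nu\int_0^T a(x)\,dx = \nu T(-\bar a)>0$ by $(a_{2})$, a contradiction for every $\nu>0$; so we may assume $L>0$. Since $\bar a<0$ forces $\|a\|_{L^1}>0$, fix $M:=2\|a\|_{L^1}$, so that $M>\|a\|_{L^1}$. The heart of the proof is the a priori estimate $\|z\|_\infty \le \nu M$, valid once $\nu$ is small. To obtain it I would mimic the maximal-interval argument of Theorem~\ref{MainTheorem2}: assuming $\|z\|_\infty>\nu M$, starting from the zero $t^*$ of $z$ one isolates a maximal interval $J$ on which $|z|\le\nu M$ and whose far endpoint $\tau$ satisfies $|z(\tau)|=\nu M$; integrating the relation over $J$ and using $z(t^*)=0$, $|z|\le\nu M$ on $J$ and $|J|\le T$ gives
\begin{equation*}
\nu M = |z(\tau)| \le L \nu^2 M^2 T + \nu \|a\|_{L^1},
\end{equation*}
whence $M \le L\nu M^2 T + \|a\|_{L^1}$ after dividing by $\nu$. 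This is impossible as soon as $\nu < (M-\|a\|_{L^1})/(L M^2 T)$, proving $\|z\|_\infty\le\nu M$ for such $\nu$.

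Finally, integrating $z' + g'(u) z^2 = -\nu a$ over $[0,T]$ and using $z(0)=z(T)$ yields
\begin{equation*}
\int_0^T g'(u(x)) z^2(x)\,dx = -\nu \int_0^T a(x)\,dx = \nu T(-\bar a).
\end{equation*}
Bounding the left-hand side by $L\int_0^T z^2 \le L T \|z\|_\infty^2 \le L T \nu^2 M^2$ and dividing by $\nu T>0$ gives $-\bar a \le L\nu M^2$, which is false whenever $\nu < (-\bar a)/(L M^2)$. Choosing
\begin{equation*}
\nu_* := \min\biggl\{\frac{M-\|a\|_{L^1}}{L M^2 T},\ \frac{-\bar a}{L M^2}\biggr\}
\end{equation*}
therefore rules out positive solutions for every $0<\nu<\nu_*$. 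The single delicate point is the a priori bound on $z$: it must be \emph{proportional} to $\nu$ (not merely bounded), since otherwise the left-hand side above would only be $O(1)$ and could not be beaten by the $O(\nu)$ right-hand side; securing this proportionality is exactly what the smallness of the quadratic coefficient in the Riccati relation buys us.
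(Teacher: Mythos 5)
Your proposal is correct and follows essentially the same route as the paper's own proof: the Riccati substitution, the maximal-interval argument yielding the a priori bound on $z$, and the final integration over $\mathopen{[}0,T\mathclose{]}$ using $z(0)=z(T)$ all coincide, the only difference being a trivial rescaling (the paper sets $z:=u'/(\nu g(u))$ and proves $\|z\|_{\infty}\leq M$, while you set $z:=u'/g(u)$ and prove $\|z\|_{\infty}\leq\nu M$, which are equivalent statements), leading to the same threshold $\nu_{*}$. Your separate treatment of the degenerate case $\sup|g'|=0$ is a minor extra care the paper absorbs by simply fixing any admissible bound $D>0$.
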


\begin{proof}
The proof follows substantially the same argument employed in the proof of Theorem~\ref{MainTheorem2} from \eqref{eqz-eq} to \eqref{eq-3.18}.

We fix two positive constants $M$ and $D$ such that
\begin{equation*}
M > \|a\|_{L^{1}} \quad \text{ and } \quad |g'(s)| \leq D, \;\; \forall \, s > 0,
\end{equation*}
(recall that, by assumption, $g(s)$ has bounded derivative on ${\mathbb{R}}^{+}_{0}$) and define
\begin{equation*}
\nu_{*} := \min \biggl{\{} \dfrac{M-\|a\|_{L^{1}}}{D M^{2} T} , \dfrac{-\bar{a}}{D M^{2}}\biggr{\}}.
\end{equation*}
We shall prove that for $0 < \nu < \nu_{*}$ problem $({\mathscr{P}}_{\nu})$ has no positive solution.

Let us suppose by contradiction that $u(x)>0$ for all $x\in \mathopen{[}0,T\mathclose{]}$ is a solution of problem $({\mathscr{P}}_{\nu})$.
Setting $z(x):= u'(x)/\nu g(u(x))$, we find
\begin{equation}\label{eqz-eq2}
z'(x) + \nu g'(u(x))z^{2}(x)= - a(x).
\end{equation}
As a consequence of the boundary conditions, we also have that $z(0) = z(T)$
and there exists $t^{*}\in \mathopen{[}0,T\mathclose{]}$
(with $t^{*}$ depending on the solution $u(x)$) with $z(t^{*}) = 0$.

First of all, we claim that
\begin{equation}\label{eqz-ineq2}
\|z\|_{\infty} \leq M.
\end{equation}
Indeed,
if by contradiction we suppose that \eqref{eqz-ineq2} is not true, then using the fact that $z(x)$
vanishes at some point of $\mathopen{[}0,T\mathclose{]}$, we can find a maximal interval $J$
of the form $\mathopen{[}t^{*},\tau\mathclose{]}$ or $\mathopen{[}\tau,t^{*}\mathclose{]}$
such that $|z(x)| \leq M$ for all $x\in J$ and $|z(x)| > M$ for some $x\not\in J$.
By the maximality of the interval $J$, we also know that $|z(\tau)| = M$.
Integrating \eqref{eqz-eq2} on $J$ and passing to the absolute value, we obtain
\begin{equation*}
\begin{aligned}
M &= |z(\tau) - z(t^{*})| \leq  \biggl{|} \int_{J} \nu g '(u(x)) z^{2}(x) ~\!dx \biggr{|} + \|a\|_{L^{1}} \\
& \leq \nu D M^{2} T + \|a\|_{L^{1}} < M,
\end{aligned}
\end{equation*}
a contradiction. In this manner, we have verified that \eqref{eqz-ineq2} is true.

Now, integrating \eqref{eqz-eq2} on $\mathopen{[}0,T\mathclose{]}$, recalling that $z(T) - z(0) = 0$
and using \eqref{eqz-ineq2}, we reach
\begin{equation*}
- \bar{a} = - \dfrac{1}{T} \int_{0}^{T} a(x)~\!dx  = \dfrac{1}{T} \int_{0}^{T} \nu g '(u(x)) z^{2}(x) ~\!dx
\leq \nu D M^{2}  <  - \bar{a},
\end{equation*}
a contradiction. This concludes the proof.
\end{proof}

\begin{remark}\label{rem-3.4}
The same proof as above works to prove the nonexistence of solution to problem $({\mathscr{P}}_{\nu})$
with range in a given open interval $\mathopen{]}\alpha,\beta\mathclose{[}$, for
$g \colon \mathopen{]}\alpha,\beta\mathclose{[} \to {\mathbb{R}}^{+}_{0}$ a smooth function with bounded derivative.
In some recent papers (see \cite{BoGa-pre2014,BoZa-2012}) similar nonexistence results have been obtained
under different conditions on the function $g(s)$.
$\hfill\lhd$
\end{remark}

\section{More general examples and applications}\label{section4}

In Section~\ref{section3} we have applied our abstract result Theorem~\ref{th-2.1}, which deals with
a general second order equation of the form
\begin{equation*}
u'' + f(x,u,u') = 0,
\end{equation*}
to the simpler case given by
\begin{equation*}
u'' + a(x) g(u) = 0.
\end{equation*}
In this section, we show how our result can be extended to a broader class of equations.
Up to this point, by ${\mathscr{B}}(u,u') = \underline{0}$, we have considered together the two different boundary conditions.
Now we present two different applications, one for the Neumann problem and another for periodic solutions.

{} For simplicity in the exposition, in our applications we will suppose that the weight function
is \textit{continuous}, in order to obtain classical positive solutions.

\subsection{The Neumann problem: radially symmetric solutions}\label{section4.1}

Let $\|\cdot\|$ be the Euclidean norm in ${\mathbb{R}}^{N}$ (for $N \geq 2$) and let
\begin{equation*}
\Omega:= B(0,R_{2})\setminus B[0,R_{1}] = \{x\in {\mathbb{R}}^{N} \colon R_{1} < \|x\| < R_{2}\}
\end{equation*}
be an open annular domain, with $0 < R_{1} < R_{2}$.
Let $q \colon \overline{\Omega}\to {\mathbb{R}}$ be a continuous function which is radially symmetric, namely
there exists a continuous scalar function ${\mathcal{Q}} \colon \mathopen{[}R_{1},R_{2}\mathclose{]}\to {\mathbb{R}}$
such that
\begin{equation*}
q(x) = {\mathcal{Q}}(\|x\|), \quad \forall \, x\in \overline{\Omega}.
\end{equation*}
In this section we consider the Neumann boundary value problem
\begin{equation}\label{eq-pde-rad}
\begin{cases}
\, -\Delta \,u = q(x)\,g(u) & \text{ in } \Omega \\
\, \dfrac{\partial u}{\partial {\bf n}} = 0 & \text{ on } \partial\Omega
\end{cases}
\end{equation}
and we are interested in the existence of radially symmetric positive solutions of \eqref{eq-pde-rad},
namely classical solutions such that $u(x) > 0$ for all $x\in \Omega$ and also $u(x') = u(x'')$ whenever
$\|x'\| = \|x''\|$.

Since we look for radially symmetric solutions of \eqref{eq-pde-rad}, our study can be reduced to the search of positive solutions
of the Neumann boundary value problem
\begin{equation}\label{eq-rad}
w''(r) + \dfrac{N-1}{r}w'(r) + {\mathcal{Q}}(r) g(w(r)) = 0, \quad w'(R_{1}) = w'(R_{2}) = 0.
\end{equation}
Indeed, if $w(r)$ is a solution of \eqref{eq-rad}, then $u(x):= w(\|x\|)$ is a solution of \eqref{eq-pde-rad}.
Using the standard change of variable
\begin{equation*}
t = h(r):= \int_{R_{1}}^{r} \xi^{1-N} ~\!d\xi
\end{equation*}
and defining
\begin{equation*}
T:= \int_{R_{1}}^{R_{2}} \xi^{1-N} ~\!d\xi, \quad r(t):= h^{-1}(t) \quad \text{and} \quad v(t)=w(r(t)),
\end{equation*}
we transform \eqref{eq-rad} into the equivalent problem
\begin{equation}\label{eq-rad1}
v'' +  a(t) g(v) = 0, \quad v'(0) = v'(T) = 0,
\end{equation}
with
\begin{equation*}
a(t):= r(t)^{2(N-1)}{\mathcal{Q}}(r(t)).
\end{equation*}
Consequently, the Neumann boundary value problem \eqref{eq-rad1} is of the same form of $({\mathscr{P}})$
and we can apply the results of Section~\ref{section3}.

Since $r(t)^{2(N-1)} > 0$ on $\mathopen{[}0,T\mathclose{]}$, condition $(a_{1})$ is satisfied provided that
a similar condition holds for ${\mathcal{Q}}(r)$ on $\mathopen{[}R_{1},R_{2}\mathclose{]}$.
Accordingly, we assume
\begin{itemize}
\item[$(q_{1})$]
\textit{there exist $m \geq 1$ intervals $J_{1},\ldots,J_{m}$, closed and pairwise disjoint, such that
such that
\begin{equation*}
\begin{aligned}
& {\mathcal{Q}}(r)\geq 0, \;\; \text{for every } \; r\in J_{i}, \text{ with }\; \max_{r\in J_{i}} {\mathcal{Q}}(r) > 0 \quad (i=1,\ldots,m);\\
& {\mathcal{Q}}(r)\leq 0, \;\; \text{for every } \; r\in \mathopen{[}R_{1},R_{2}\mathclose{]}\setminus\bigcup_{i=1}^{m} J_{i}.
\end{aligned}
\end{equation*}}
\end{itemize}
Condition $(a_{2})$ reads as
\begin{equation*}
0 > \int_{0}^{T}r(t)^{2(N-1)}{\mathcal{Q}}(r(t))~\!dt = \int_{R_{1}}^{R_{2}}r^{N-1}{\mathcal{Q}}(r)~\!dr.
\end{equation*}
Up to a multiplicative constant, the latter integral is the integral of $q(x)$ on $\Omega$,
using the change of variable formula for radially symmetric functions (cf.~\cite{Fo-1984}). Thus, $a(t)$ satisfies $(a_{2})$ if and only if
\begin{equation*}
\int_{\Omega}^{} q(x)~\!dx < 0.
\leqno{\hspace*{2.2pt}(q_{2})}
\end{equation*}

The following theorems are easy corollaries of the results presented in Section~\ref{section3}.

\begin{theorem}\label{th-radial-1}
Let $g \colon {\mathbb{R}}^{+}\to{\mathbb{R}}^{+}$ be a continuous function, regularly oscillating at zero
and satisfying $(g_{1})$. Assume
\begin{equation*}
\lim_{s\to 0^{+}}\dfrac{g(s)}{s}=0 \quad \text{ and } \quad \lim_{s\to +\infty}\dfrac{g(s)}{s}=+\infty.
\end{equation*}
Let $q(x) = {\mathcal{Q}}(\|x\|)$ be a continuous radially symmetric function satisfying $(q_{1})$ and $(q_{2})$.
Then problem \eqref{eq-pde-rad} has at least one positive radially symmetric solution.
\end{theorem}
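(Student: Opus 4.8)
The plan is to reduce the elliptic problem to the one-dimensional Neumann boundary value problem and then invoke Corollary~\ref{cor-3.1}, since the hypotheses imposed on $g$ in the statement coincide exactly with those required there. I would begin from the reduction already carried out above: a radially symmetric function $u(x) = w(\|x\|)$ solves \eqref{eq-pde-rad} precisely when $w$ solves the singular Neumann problem \eqref{eq-rad}, and the change of variable $t = h(r) = \int_{R_{1}}^{r}\xi^{1-N}\,d\xi$ turns \eqref{eq-rad} into the equivalent problem \eqref{eq-rad1}, namely $v'' + a(t)g(v) = 0$ with $v'(0) = v'(T) = 0$ and $a(t) = r(t)^{2(N-1)}{\mathcal{Q}}(r(t))$. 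This is exactly a Neumann problem of the form $({\mathscr{P}})$, so it suffices to check that the transformed data fit the hypotheses of Corollary~\ref{cor-3.1}.

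The verification of these hypotheses is the core (and essentially only) step. Since $\xi \mapsto \xi^{1-N}$ is positive and continuous on $\mathopen{[}R_{1},R_{2}\mathclose{]}$, the map $h$ is a strictly increasing smooth diffeomorphism of $\mathopen{[}R_{1},R_{2}\mathclose{]}$ onto $\mathopen{[}0,T\mathclose{]}$; hence $r(\cdot) = h^{-1}$ is smooth and $a(t)$ is continuous. Because the factor $r(t)^{2(N-1)}$ is strictly positive, $a(t)$ has the same sign as ${\mathcal{Q}}(r(t))$ at every point, so each interval $J_{i}$ of $(q_{1})$ is carried by $h$ to a closed interval $I_{i} := h(J_{i})$, with the $I_{i}$ pairwise disjoint, $a \geq 0$ and $a \not\equiv 0$ on $I_{i}$, and $a \leq 0$ on the complement; this is precisely condition $(a_{1})$. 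Condition $(a_{2})$ for $a(t)$ is equivalent to $(q_{2})$ through the identity $\int_{0}^{T} r(t)^{2(N-1)}{\mathcal{Q}}(r(t))\,dt = \int_{R_{1}}^{R_{2}} r^{N-1}{\mathcal{Q}}(r)\,dr$ already recorded above. The remaining assumptions, that is $(g_{1})$, regular oscillation of $g$ at zero, $\lim_{s\to 0^{+}} g(s)/s = 0$ and $\lim_{s\to +\infty} g(s)/s = +\infty$, are imposed directly in the statement and are untouched by the reduction.

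Once $(g_{1})$, $(a_{1})$, $(a_{2})$ and the two limit conditions on $g$ are in hand, Corollary~\ref{cor-3.1} produces a solution $v$ of \eqref{eq-rad1} with $v(t) > 0$ for all $t \in \mathopen{[}0,T\mathclose{]}$. I would then transport this back: setting $w(r) := v(h(r))$ gives a positive solution of \eqref{eq-rad}, and $u(x) := w(\|x\|)$ is a positive radially symmetric function on $\overline{\Omega}$. A direct computation, using $\Delta u = w'' + \frac{N-1}{r}w'$ for radial functions and $\partial u/\partial{\bf n} = \pm\, w'(R_{i})$ on the two boundary spheres, shows that $u$ solves \eqref{eq-pde-rad}; since $q$ is continuous, $u$ is a classical solution.

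The main obstacle here is organisational rather than analytic: essentially all the substantive content resides in Corollary~\ref{cor-3.1} (and, behind it, in Theorem~\ref{MainTheorem} together with the a~priori bounds of the appendix), so the only point demanding care is confirming that the change of variables is an honest equivalence, namely that it preserves the interval structure underlying $(a_{1})$, respects the Neumann conditions, and does not degrade regularity. Since $R_{1} > 0$ keeps $h$ non-singular on $\mathopen{[}R_{1},R_{2}\mathclose{]}$, none of these points raises a genuine difficulty.
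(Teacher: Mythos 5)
Your proposal is correct and follows exactly the paper's route: Section~\ref{section4.1} performs the same change of variables $t=h(r)$ to reduce \eqref{eq-pde-rad} to the Neumann problem \eqref{eq-rad1}, checks that $(q_{1})$ and $(q_{2})$ translate into $(a_{1})$ and $(a_{2})$ for $a(t)=r(t)^{2(N-1)}{\mathcal{Q}}(r(t))$, and then obtains Theorem~\ref{th-radial-1} as an immediate corollary of Corollary~\ref{cor-3.1}. Your write-up merely makes explicit the verifications (monotonicity of $h$, preservation of the interval structure, the integral identity, and the transport of the positive solution back to the annulus) that the paper leaves implicit.
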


\begin{theorem}\label{th-radial-2}
Let $g \colon {\mathbb{R}}^{+}\to{\mathbb{R}}^{+}$ be a continuous function, regularly oscillating at zero
and satisfying $(g_{1})$. Assume
\begin{equation*}
\lim_{s\to 0^{+}}\dfrac{g(s)}{s}=0 \quad \text{ and } \quad \liminf_{s\to +\infty}\dfrac{g(s)}{s} > 0.
\end{equation*}
Let $q(x) = {\mathcal{Q}}(\|x\|)$ be a continuous radially symmetric function satisfying $(q_{1})$ and $(q_{2})$.
Then there exists $\nu^{*} > 0$ such that problem
\begin{equation}\label{eq-pde-radnu}
\begin{cases}
\, -\Delta \,u = \nu \, q(x)\,g(u) & \text{ in } \Omega \\
\, \dfrac{\partial u}{\partial {\bf n}} = 0 & \text{ on } \partial\Omega
\end{cases}
\end{equation}
has at least one positive radially symmetric solution for each $\nu > \nu^{*}$.
\end{theorem}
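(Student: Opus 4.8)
The plan is to recognize Theorem~\ref{th-radial-2} as a direct corollary of Corollary~\ref{cor-3.2}, accessed through the radial reduction already set up in this subsection. First I would observe that, exactly as for the case $\nu = 1$ treated above, the radially symmetric version of \eqref{eq-pde-radnu} reduces --- through the change of variable $t = h(r) = \int_{R_{1}}^{r} \xi^{1-N}\,d\xi$ and the substitution $v(t) = w(r(t))$ --- to the Neumann problem
\begin{equation*}
v'' + \nu\, a(t)\, g(v) = 0, \quad v'(0) = v'(T) = 0,
\end{equation*}
with $a(t) := r(t)^{2(N-1)}{\mathcal{Q}}(r(t))$. Since $h$ is strictly increasing, this reduction is an equivalence of problems, and the resulting equation is precisely $({\mathscr{P}}_{\nu})$ with the Neumann boundary conditions.

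Next I would verify that $a(t)$ satisfies $(a_{1})$ and $(a_{2})$, so that Corollary~\ref{cor-3.2} is applicable. Because $r(t)^{2(N-1)} > 0$ on $\mathopen{[}0,T\mathclose{]}$, the sign of $a(t)$ coincides with that of ${\mathcal{Q}}(r(t))$; hence $(q_{1})$ transfers to $(a_{1})$, with positivity intervals $I_{i} := h(J_{i})$ and with $a \not\equiv 0$ on each $I_{i}$ because $\max_{r \in J_{i}}{\mathcal{Q}}(r) > 0$. For the average, the computation preceding the statement gives $\int_{0}^{T} a(t)\,dt = \int_{R_{1}}^{R_{2}} r^{N-1}{\mathcal{Q}}(r)\,dr$, a positive multiple of $\int_{\Omega} q(x)\,dx$, so $(q_{2})$ yields $\bar{a} < 0$, that is $(a_{2})$. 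The hypotheses on $g$ --- namely $(g_{1})$, regular oscillation at zero, $\lim_{s\to 0^{+}} g(s)/s = 0$, and $g_{\infty} = \liminf_{s\to +\infty} g(s)/s > 0$ --- are exactly those required by Corollary~\ref{cor-3.2}.

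Finally I would invoke Corollary~\ref{cor-3.2}: it furnishes a threshold $\nu^{*} > 0$ such that for each $\nu > \nu^{*}$ the transformed Neumann problem admits a positive solution $v(t)$. Setting $w(r) := v(h(r))$ produces a positive solution of the radial equation, and $u(x) := w(\|x\|)$ is then a positive radially symmetric solution of \eqref{eq-pde-radnu}; since $q$, and hence $a$, is continuous, $u$ is a classical solution. There is no genuine analytic obstacle here, as the superlinear mechanism and all the a priori bounds are delivered entirely by Corollary~\ref{cor-3.2}. The only point demanding care is the routine bookkeeping of the change of variable: checking that $h$ is a diffeomorphism of $\mathopen{[}R_{1},R_{2}\mathclose{]}$ onto $\mathopen{[}0,T\mathclose{]}$ carrying $w'(R_{1}) = w'(R_{2}) = 0$ to $v'(0) = v'(T) = 0$, and that strict positivity and $\mathcal{C}^{2}$ regularity are preserved in both directions.
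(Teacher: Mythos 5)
Your proposal is correct and coincides with the paper's own argument: the paper proves Theorem~\ref{th-radial-2} precisely by the radial reduction $t=h(r)$, $v(t)=w(r(t))$ set up at the start of Section~\ref{section4.1}, noting that $(q_{1})$ and $(q_{2})$ translate into $(a_{1})$ and $(a_{2})$ for $a(t)=r(t)^{2(N-1)}\mathcal{Q}(r(t))$, and then invoking Corollary~\ref{cor-3.2}. Your verification of the transfer of hypotheses and the passage back to a classical positive radial solution matches the paper's (briefer) treatment exactly.
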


Clearly, Theorem~\ref{th-radial-1} and Theorem~\ref{th-radial-2} correspond to Corollary~\ref{cor-3.1} and Corollary~\ref{cor-3.2}, respectively.
The next result follows from the same argument that led to Corollary~\ref{cor-3.5}.

\begin{theorem}\label{th-radial-3}
Let $g \colon {\mathbb{R}}^{+}\to{\mathbb{R}}^{+}$ be a continuous function, regularly oscillating at zero
and satisfying $(g_{1})$. Assume
\begin{equation*}
\lim_{s\to 0^{+}}\dfrac{g(s)}{s}=0 \quad \text{ and } \quad 0 < \liminf_{s\to +\infty}\dfrac{g(s)}{s} \leq \limsup_{s\to +\infty}\dfrac{g(s)}{s}
< +\infty.
\end{equation*}
Let $q(x) = {\mathcal{Q}}(\|x\|)$ be a continuous radially symmetric function satisfying $(q_{2})$
and such that $q(x_{0}) > 0$ for some $x_{0}\in \Omega$.
Then there exists $\nu^{*} > 0$ such that problem \eqref{eq-pde-radnu}
has at least one positive radially symmetric solution for each $\nu > \nu^{*}$.
\end{theorem}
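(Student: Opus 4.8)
The plan is to reduce the PDE problem \eqref{eq-pde-radnu} to the one-dimensional Neumann problem $({\mathscr{P}}_{\nu})$ and then invoke Corollary~\ref{cor-3.5}. As already recorded in this section, searching for radially symmetric solutions $u(x) = w(\|x\|)$ leads to the singular ODE
\[
w'' + \dfrac{N-1}{r} w' + \nu\, {\mathcal{Q}}(r) g(w) = 0, \quad w'(R_{1}) = w'(R_{2}) = 0,
\]
and the change of variable $t = h(r) = \int_{R_{1}}^{r} \xi^{1-N}\,d\xi$ transforms this into
\[
v'' + \nu\, a(t)\, g(v) = 0, \quad v'(0) = v'(T) = 0,
\]
with $a(t) := r(t)^{2(N-1)} {\mathcal{Q}}(r(t))$. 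This is exactly $({\mathscr{P}}_{\nu})$ for the Neumann boundary conditions, and a positive solution $v$ of it yields, via $w = v \circ h$ and $u(x) = w(\|x\|)$, a positive radially symmetric solution of \eqref{eq-pde-radnu}.

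Next I would check that the weight $a(t)$ meets the hypotheses of Corollary~\ref{cor-3.5}. The average condition $(a_{2})$ is equivalent to $(q_{2})$, since (as computed above) $\int_{0}^{T} a(t)\,dt = \int_{R_{1}}^{R_{2}} r^{N-1}{\mathcal{Q}}(r)\,dr$, which is a positive multiple of $\int_{\Omega} q(x)\,dx$. For the required positivity interval, the assumption $q(x_{0}) > 0$ gives ${\mathcal{Q}}(r_{0}) > 0$ with $r_{0} := \|x_{0}\| \in \mathopen{]}R_{1},R_{2}\mathclose{[}$; by continuity ${\mathcal{Q}} > 0$ on a closed subinterval of $\mathopen{]}R_{1},R_{2}\mathclose{[}$, and since the factor $r(t)^{2(N-1)}$ is strictly positive, the corresponding $t$-interval $J$ satisfies $a \geq 0$ on $J$ with $\int_{J} a(t)\,dt > 0$. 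I want to emphasize that this is precisely the reason Corollary~\ref{cor-3.5} is the right tool: it dispenses with the full sign structure $(a_{1})$ and demands only a single interval of positivity, so here I need nothing like $(q_{1})$, just $q(x_{0})>0$ at one point.

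Finally, the growth conditions on $g$ in the statement coincide verbatim with those in Corollary~\ref{cor-3.5} (regular oscillation at zero, $\lim_{s\to 0^{+}} g(s)/s = 0$, and a finite positive bound on $\liminf$ and $\limsup$ of $g(s)/s$ at infinity), so applying that corollary produces a threshold $\nu^{*} > 0$ such that $({\mathscr{P}}_{\nu})$ has a positive solution for every $\nu > \nu^{*}$; translating back through the change of variables completes the proof. I do not expect any serious obstacle here, since the whole substance of the argument is carried by Corollary~\ref{cor-3.5}. The only points that require (routine) care are verifying that $h$ is a diffeomorphism of $\mathopen{[}R_{1},R_{2}\mathclose{]}$ onto $\mathopen{[}0,T\mathclose{]}$ so that the reduction is reversible, and that both the pointwise sign and the sign of the relevant integrals transfer cleanly between the $r$- and $t$-variables; the positivity of the Jacobian-type factor $r(t)^{2(N-1)}$ makes both transfers immediate.
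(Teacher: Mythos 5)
Your proposal is correct and matches the paper's own argument: the paper proves Theorem~\ref{th-radial-3} exactly by the change of variables $t = h(r)$ already set up in Section~\ref{section4.1} (yielding the weight $a(t) = r(t)^{2(N-1)}\mathcal{Q}(r(t))$ and the equivalence of $(a_{2})$ with $(q_{2})$) followed by ``the same argument that led to Corollary~\ref{cor-3.5}'', which is precisely what you invoke, including the key observation that $q(x_{0})>0$ supplies the single interval of positivity needed in place of $(q_{1})$. Your verification of the hypotheses of Corollary~\ref{cor-3.5} (diffeomorphism property of $h$, sign and integral transfer via the positive factor $r(t)^{2(N-1)}$) is accurate, so nothing is missing.
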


Note that, with respect to Theorem~\ref{th-radial-2}, in the above result we do not assume condition $(q_{1})$
on the weight function. In this manner, we can consider
functions ${\mathcal{Q}}(r)$ with infinitely many changes of sign in $\mathopen[R_{1},R_{2}\mathclose]$.

All the above three theorems can be stated in a version where the regularly oscillating assumption at zero is replaced
with the hypothesis that $g(s)$ is continuously differentiable on a right neighborhood of zero,
according to Theorem~\ref{MainTheorem2}. For instance, the
corresponding version of Theorem~\ref{th-radial-1} reads as follows.

\begin{theorem}\label{th-radial-4}
Let $g \colon {\mathbb{R}}^{+}\to{\mathbb{R}}^{+}$ be a continuous function satisfying $(g_{1})$ and such that
$g(s)$ is continuously differentiable on a right neighborhood of $s=0$. Assume
\begin{equation*}
g'(0)=0 \quad \text{ and } \quad \lim_{s\to +\infty}\dfrac{g(s)}{s}=+\infty.
\end{equation*}
Let $q(x) = {\mathcal{Q}}(\|x\|)$ be a continuous radially symmetric function satisfying $(q_{1})$ and $(q_{2})$.
Then problem \eqref{eq-pde-rad} has at least one positive radially symmetric solution.
\end{theorem}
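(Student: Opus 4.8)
The plan is to recognize Theorem~\ref{th-radial-4} as a direct corollary of Theorem~\ref{MainTheorem2}, obtained through the radial reduction and the change of variables already set up above. First I would look for radially symmetric solutions $u(x) = w(\|x\|)$, which reduces the PDE \eqref{eq-pde-rad} to the Neumann ODE \eqref{eq-rad}. Then, applying the Liouville-type substitution $t = h(r) = \int_{R_{1}}^{r}\xi^{1-N}\,d\xi$ and setting $v(t) = w(r(t))$, the term $\frac{N-1}{r}w'$ is absorbed and \eqref{eq-rad} becomes the one-dimensional problem \eqref{eq-rad1}, i.e.\ a problem of the form $({\mathscr{P}})$ with Neumann conditions and weight $a(t) = r(t)^{2(N-1)}{\mathcal{Q}}(r(t))$. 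A short computation confirms that the Neumann condition $w'(R_{1}) = w'(R_{2}) = 0$ transforms into $v'(0) = v'(T) = 0$, since $v'(t) = w'(r(t))\,r'(t)$ with $r'(t) = r(t)^{N-1} > 0$.

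The core of the argument is then to verify the hypotheses of Theorem~\ref{MainTheorem2} for the transformed problem. Since $\xi^{1-N} > 0$ on $\mathopen{[}R_{1},R_{2}\mathclose{]}$, the map $h$ is an increasing homeomorphism and $r(t)^{2(N-1)} > 0$, so $a(t)$ has the same sign as ${\mathcal{Q}}(r(t))$. Hence condition $(q_{1})$ transfers to $(a_{1})$, taking as intervals of non-negativity $I_{i} := h(J_{i})$ (closed, pairwise disjoint, with $a \not\equiv 0$ on each $I_{i}$ because $\max_{J_{i}}{\mathcal{Q}} > 0$ and ${\mathcal{Q}}$ is continuous). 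For $(a_{2})$ I would use the identity $\int_{0}^{T} r(t)^{2(N-1)}{\mathcal{Q}}(r(t))\,dt = \int_{R_{1}}^{R_{2}} r^{N-1}{\mathcal{Q}}(r)\,dr$, which equals a positive multiple of $\int_{\Omega} q(x)\,dx$; condition $(q_{2})$ then gives $\bar{a} < 0$.

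It remains to translate the growth conditions on $g$. Because $g$ is continuously differentiable on a right neighborhood of $0$ with $g(0) = 0$ and $g'(0) = 0$, one has $\lim_{s\to 0^{+}} g(s)/s = g'(0) = 0$, the first half of $(g_{2})$; and $\lim_{s\to +\infty} g(s)/s = +\infty$ forces $g_{\infty} = +\infty > \max_{i}\lambda_{1}^{i}$, giving the second half. Thus Theorem~\ref{MainTheorem2} applies and yields a positive solution $v$ of \eqref{eq-rad1} on $\mathopen{[}0,T\mathclose{]}$; undoing the substitutions, $w(r) = v(h(r))$ solves \eqref{eq-rad} and $u(x) = w(\|x\|)$ is a positive, radially symmetric, classical ($\mathcal{C}^{2}$, by continuity of ${\mathcal{Q}}$ and $g$) solution of \eqref{eq-pde-rad}.

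I do not expect a genuine obstacle here: the substantive work is entirely contained in Theorem~\ref{MainTheorem2}, and the present statement is essentially a bookkeeping exercise. The only points deserving care are checking that the change of variables preserves the Neumann boundary conditions and the sign structure of the weight, and that positivity and $\mathcal{C}^{2}$-regularity survive the inverse transformation — all of which are routine since $R_{1} > 0$ keeps $h$ and $h^{-1}$ smooth and nonsingular on $\mathopen{[}R_{1},R_{2}\mathclose{]}$.
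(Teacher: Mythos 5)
Your proof is correct and takes essentially the same route as the paper: Section~\ref{section4.1} performs exactly this radial reduction and change of variables, identifies $a(t)=r(t)^{2(N-1)}{\mathcal{Q}}(r(t))$, transfers $(q_{1})$, $(q_{2})$ into $(a_{1})$, $(a_{2})$, and then invokes Theorem~\ref{MainTheorem2}, with $g'(0)=0$ supplying the first condition in $(g_{2})$ and $g_{\infty}=+\infty>\max_{i}\lambda_{1}^{i}$ the second. The verifications you spell out (preservation of the Neumann conditions, sign structure, positivity and $\mathcal{C}^{2}$-regularity under the inverse substitution) are precisely the routine details the paper leaves implicit.
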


If we also suppose that $g(s)$ is continuously differentiable on ${\mathbb{R}}^{+}_{0}$ with
$g'(s) > 0$ for all $s > 0$, then condition $(q_{2})$ is also necessary for the existence of a positive solution.
On the other hand, as observed in the Introduction, also the fact that the weight function must change its sign is
necessary for the existence of solutions. With this respect, the following corollary can be derived from the smooth version of
Theorem~\ref{th-radial-3} (see also Corollary~\ref{cor-3.7}).

\begin{corollary}\label{cor-radial-1}
Let $g \colon {\mathbb{R}}^{+}\to{\mathbb{R}}^{+}$ be a continuously differentiable function such that
$g'(s) > 0$ for all $s > 0$. Assume
\begin{equation*}
g(0) = g'(0) = 0 \quad \text{ and } \quad g'(+\infty) = \ell > 0.
\end{equation*}
Let $q(x) = {\mathcal{Q}}(\|x\|)$ be a continuous radially symmetric function.
Then there exists $\nu^{*} > 0$ such that problem \eqref{eq-pde-radnu}
has at least one positive radially symmetric solution for each $\nu > \nu^{*}$
if and only if
\begin{equation*}
q^{+}(x)\not\equiv 0 \quad \text{ and } \quad \int_{\Omega} q(x)~\!dx < 0.
\end{equation*}
\end{corollary}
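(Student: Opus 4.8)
The plan is to prove the two implications separately, in each case reducing the statement to the one-dimensional Neumann problem \eqref{eq-rad1} through the change of variables $t = h(r)$ already set up in Section~\ref{section4.1}. Under that reduction a positive radially symmetric solution of \eqref{eq-pde-radnu} corresponds exactly to a positive solution of $v'' + \nu\, a(t) g(v) = 0$, $v'(0)=v'(T)=0$, with $a(t) := r(t)^{2(N-1)}{\mathcal{Q}}(r(t))$. Since the factor $r(t)^{2(N-1)}$ is strictly positive, one has the dictionary $q^{+}\not\equiv 0 \Leftrightarrow a^{+}\not\equiv 0$ and, as already computed in this section, $\int_{\Omega} q\,dx < 0 \Leftrightarrow \int_{0}^{T} a\,dt < 0$, i.e.~condition $(a_{2})$. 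Thus both the data and the two conclusions can be read interchangeably on the PDE and on the reduced ODE.

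For the sufficiency I would first note that the hypothesis $g'(+\infty) = \ell > 0$, read through the generalized de l'H\^{o}pital inequality recorded right after Corollary~\ref{cor-3.7}, forces
\[
0 < \ell \leq \liminf_{s\to+\infty}\frac{g(s)}{s} \leq \limsup_{s\to+\infty}\frac{g(s)}{s} \leq \ell < +\infty,
\]
so that $g$ satisfies the growth requirements at infinity of Theorem~\ref{th-radial-3}. Together with $g(0)=g'(0)=0$ and the $\mathcal{C}^{1}$ regularity of $g$ near $s=0$ (which replaces the regularly oscillating hypothesis, exactly as in Theorem~\ref{MainTheorem2}), and with the weight assumptions $q(x_{0})>0$ for some $x_{0}\in\Omega$ and $\int_{\Omega} q < 0$ furnished by $q^{+}\not\equiv 0$ and by the second displayed condition, the smooth version of Theorem~\ref{th-radial-3} applies verbatim and yields the threshold $\nu^{*}$ together with a positive radial solution for every $\nu>\nu^{*}$.

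For the necessity I would fix $\nu>\nu^{*}$ and a positive radial solution, pass to the corresponding $v$ solving \eqref{eq-rad1} with weight $\nu a$, and exploit the strict sign of $g'$. Dividing the equation by $g(v)>0$ and using $(v'/g(v))' = v''/g(v) + g'(v)(v'/g(v))^{2}$, the Neumann conditions make the total-derivative term integrate to zero, giving
\[
\int_{0}^{T} g'(v(t))\Bigl(\frac{v'(t)}{g(v(t))}\Bigr)^{2}\,dt = -\nu\int_{0}^{T} a(t)\,dt.
\]
Since $a\not\equiv 0$ forces $v$ to be nonconstant (a constant $c>0$ would require $a\,g(c)\equiv 0$), and $g'>0$, the left-hand side is strictly positive; hence $\int_{0}^{T} a < 0$, that is $\int_{\Omega} q < 0$. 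Integrating instead the equation itself over $\mathopen{[}0,T\mathclose{]}$ and using the boundary conditions gives $\int_{0}^{T} a(t)g(v(t))\,dt = 0$; as $g(v)>0$ everywhere, $a$ cannot be $\leq 0$ a.e., so $a^{+}\not\equiv 0$, i.e.~$q^{+}\not\equiv 0$.

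I expect the main obstacle to be localized entirely in the necessity part: one must guarantee that the solution is genuinely nonconstant before asserting the strict inequality in the integral identity (handled by the remark that $a\not\equiv 0$ excludes constants), and one must justify the division by $g(v)$ and the vanishing of the boundary term, both of which rest on $v>0$ and the Neumann conditions. The sufficiency, by contrast, is essentially a citation of the smooth version of Theorem~\ref{th-radial-3}, the only genuine check being the passage from $g'(+\infty)=\ell$ to a finite, strictly positive limit of $g(s)/s$.
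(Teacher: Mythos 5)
Your proposal is correct and follows essentially the same route as the paper: sufficiency by invoking the smooth ($\mathcal{C}^{1}$ near $s=0$) version of Theorem~\ref{th-radial-3}, i.e.\ Corollary~\ref{cor-3.7} read through the radial reduction to \eqref{eq-rad1}, and necessity via the two integral identities from the Introduction (integrating the equation, and integrating after dividing by $g(v)$), which is exactly what the paper's derivation appeals to. The only caveat, shared equally by the paper's own statement, is the degenerate case $q \equiv 0$, where positive constant solutions exist for every $\nu$ while the stated sign conditions fail; both your necessity argument (at the step ``$a \not\equiv 0$ forces $v$ to be nonconstant'') and the paper's implicitly assume $q \not\equiv 0$.
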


Note also that, under the assumptions of Corollary~\ref{cor-radial-1} there is also a constant $\nu_{*} > 0$ such that
for each $0 < \nu < \nu_{*}$ problem \eqref{eq-pde-radnu} has no positive radial solutions (cf.~Proposition~\ref{prop-3.1}).

\medskip

Possible examples of functions satisfying the above conditions are
\begin{equation*}
g(s) = K s \arctan(s^{\gamma-1}) \quad \text{for } \; s \geq 0 \qquad (\gamma > 1, \; K > 0)
\end{equation*}
and
\begin{equation*}
g(s) = K \dfrac{s^{\gamma}}{s^{\gamma-1} + M} \quad \text{for } \; s \geq 0 \qquad (\gamma > 1, \; K,M > 0).
\end{equation*}

\begin{remark}\label{rem-4.1}
In \cite{BeCaDoNi-1994}, Berestycki, Capuzzo-Dolcetta and Nirenberg obtained an existence result of
positive solutions for the Neumann problem \eqref{eq-pde-rad} in the superlinear indefinite case
for $\Omega$ a bounded domain with smooth boundary. In \cite[Theorem~3]{BeCaDoNi-1994} the main assumptions require
that $g(s)$ has a precise power-like growth at infinity,
that is $g(s)/s^{p} \to l >0$ (as $s\to +\infty$) for some $p \in \mathopen]1, (N+2)/(N-1)\mathclose[$,
and that $\nabla q(x)$ does not vanish on the points of
\begin{equation*}
\Gamma:=\overline{\{x\in \Omega \colon q(x) > 0\}} \cap \overline{\{x\in \Omega \colon q(x) < 0\}}\subseteq \Omega.
\end{equation*}
Our setting is much more simplified as we consider an annular domain and a radially symmetric weight function.
On the other hand, our growth condition at infinity is more general (allowing a nonlinearity which is not necessarily of power-like type)
and, moreover, no condition on the zeros of $q(x)$ is required.
$\hfill\lhd$
\end{remark}

\subsection{The periodic problem: a Li\'{e}nard type equation}\label{section4.2}

In this section we deal with the existence of periodic positive solutions
to a \textit{Li\'{e}nard type equation}, namely positive solutions of
\begin{equation}\label{BVP-Lienard}
\begin{cases}
\, u'' + h(u) u' + a(x) g(u) = 0, \quad 0 < x < T, \\
\, u(0) = u(T), \quad
\, u'(0) = u'(T),
\end{cases}
\end{equation}
where $h \colon {\mathbb{R}}^{+} \to {\mathbb{R}}$ is a continuous function.
As a preliminary remark, we observe that, if $u(x) > 0$ is any solution of \eqref{BVP-Lienard},
then $\int_{0}^{T} h(u(x)) u'(x) ~\!dx = 0$ and also $\int_{0}^{T} {h(u(x))} u'(x) / {g(u(x))}  ~\!dx = 0$.
Consequently, the condition that $a(x)$ changes sign with negative average, which is necessary for $({\mathscr{P}})$
(when $g'(s) > 0$), is still necessary for \eqref{BVP-Lienard}.

For simplicity, in this section we present only an extension of Corollary~\ref{cor-3.1} to the Li\'{e}nard equation.
In particular, we do not consider the alternative approach of Theorem~\ref{MainTheorem2} for $g(s)$ smooth.
Accordingly, applying the results in Section~\ref{section2}, we prove the following theorem.

\begin{theorem}\label{th-Lienard}
Let $h \colon {\mathbb{R}}^{+} \to {\mathbb{R}}$ be continuous and bounded.
Let $g \colon {\mathbb{R}}^{+}\to{\mathbb{R}}^{+}$ be a continuous function, regularly oscillating at zero
and satisfying $(g_{1})$. Assume
\begin{equation}\label{eq-g2}
\lim_{s\to 0^{+}}\dfrac{g(s)}{s}=0 \quad \text{ and } \quad \lim_{s\to +\infty}\dfrac{g(s)}{s}=+\infty.
\end{equation}
Let $a\colon \mathopen{[}0,T\mathclose{]}\to {\mathbb{R}}$ be a continuous function satisfying $(a_{1})$ and $(a_{2})$.
Then problem \eqref{BVP-Lienard} has at least one positive solution.
\end{theorem}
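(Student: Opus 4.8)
The plan is to fit problem \eqref{BVP-Lienard} into the framework of Theorem~\ref{th-2.1}. The only structural difficulty is that the naive choice $f(x,s,\xi)=h(s)\xi+a(x)g(s)$ violates $(f_{1})$, since $f(x,0,\xi)=h(0)\xi$ need not vanish. I would cure this by splitting off the constant part of the convection: write $h(s)=h(0)+\hat{h}(s)$ with $\hat{h}(s):=h(s)-h(0)$, so that $\hat{h}(0)=0$, and absorb the term $h(0)u'$ into the linear operator. By Remark~\ref{rem-2.1} the whole machinery of Theorem~\ref{th-2.1} survives for the periodic problem when $u\mapsto -u''$ is replaced by $u\mapsto -(u''+h(0)u')$. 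With this choice the nonlinearity becomes $f(x,s,\xi):=\hat{h}(s)\xi+a(x)g(s)$, which now satisfies $(f_{1})$. Because $h$ is bounded and $g(s)/s\to 0$ at $0^{+}$, condition $(f_{2})$ follows from $|\hat{h}(s)\xi+a(x)g(s)|\le \|\hat{h}\|_{\infty}|\xi|+|a(x)|g(s)$ near $s=0$, and $(f_{3})$ holds with the Nagumo function $\phi(\xi)=1+\xi$ and a suitable $\psi\in L^{\infty}$, precisely because $h$ is bounded. Positivity of nontrivial nonnegative solutions is unaffected: at an interior extremum (periodicity lets us treat every point as interior) one has $u'=0$, so the convective term drops out and the maximum principle of Lemma~\ref{Maximum principle} applies verbatim.

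For $(H_{r})$ I would follow the first part of the proof of Theorem~\ref{MainTheorem}. The average condition is immediate, as $f(x,r,0)=a(x)g(r)$ gives $\int_{0}^{T}f(x,r,0)\,dx=g(r)\,T\bar{a}<0$. Arguing by contradiction, take $r_{n}\to 0^{+}$, $\vartheta_{n}\in\mathopen{]}0,1\mathclose{]}$ and positive solutions $u_{n}$ with $\|u_{n}\|_{\infty}=r_{n}$; integrating the equation over $[0,T]$ and using periodicity, the terms $\int u_{n}''$, $\int h(0)u_{n}'$ and $\int \vartheta_{n}\hat{h}(u_{n})u_{n}'=\vartheta_{n}[\hat{H}(u_{n}(T))-\hat{H}(u_{n}(0))]$ all vanish (with $\hat{H}'=\hat{h}$), so $\int_{0}^{T}a(x)g(u_{n})\,dx=0$ exactly as before. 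Setting $v_{n}:=u_{n}/r_{n}$, the normalized equation reads $v_{n}''+c_{n}(x)v_{n}'=-\vartheta_{n}a(x)\frac{g(u_{n})}{u_{n}}v_{n}=:f_{n}(x)$, with $c_{n}:=h(0)+\vartheta_{n}\hat{h}(u_{n})$ uniformly bounded and $f_{n}\to 0$ uniformly. The new point with respect to Theorem~\ref{MainTheorem} is that the convective term blocks the elementary estimate for $v_{n}'$; however $v_{n}'$ vanishes at some point (by Rolle's theorem, using periodicity), so integrating the first order equation for $w_{n}:=v_{n}'$ with the integrating factor $\exp(\int c_{n})$ gives $\|v_{n}'\|_{\infty}\le e^{\|c_{n}\|_{\infty}T}\,T\,\|f_{n}\|_{\infty}\to 0$. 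Hence $v_{n}\to 1$ uniformly, and the regular oscillation of $g$ at zero yields the contradiction through $0<-T\bar{a}\le\|a\|_{L^{1}}\,|g(r_{n}\omega_{n})/g(r_{n})-1|$ with $\omega_{n}\to 1$, precisely as in Theorem~\ref{MainTheorem}.

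The verification of $(H_{R})$ is where the main difficulty lies, because the a priori bound on the maximum of the solutions on the intervals $I_{i}$ in Theorem~\ref{MainTheorem} rests on Lemma~\ref{lemma_RJ}, stated for equations without a first order term. The device I would use is the integrating factor $E(x):=\exp\bigl(\int_{0}^{x}h(u(t))\,dt\bigr)$, which is bounded between the positive constants $e^{-\|h\|_{\infty}T}$ and $e^{\|h\|_{\infty}T}$ since $h$ is bounded. Multiplying the equation $u''+h(u)u'+a(x)g(u)+\alpha v(x)=0$ by $E$ casts it into the Sturm--Liouville form $(Eu')'=-E\,(a(x)g(u)+\alpha v(x))$, restoring exactly the structure behind Lemma~\ref{lemma_RJ}: on each $I_{i}$ the weight $Ea$ is a bounded multiple of $a$, hence nonnegative with a finite relevant first eigenvalue, and the superlinear growth $\lim_{s\to+\infty}g(s)/s=+\infty$ pushes the effective coefficient $E\,a\,g(u)/u$ eventually above it, so a large value of $\max_{I_{i}}u$ would force, via Sturm comparison with the eigenfunction of \eqref{eq-eig}, a zero of $u$ inside $I_{i}$, contradicting positivity; this produces the bounds $R_{I_{i}}$ and hence a constant $R$ as in \eqref{eq-R}. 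The same factor disposes of the remaining points: on the intervals of $[0,T]\setminus A$ one has $(Eu')'=-Eag(u)\ge 0$, so $Eu'$ is nondecreasing there and $u$ cannot attain an interior maximum off $A$, giving $\max_{[0,T]}u=\max_{A}u$; and integrating the full equation over $[0,T]$ kills the convective term by periodicity, so the choice \eqref{eq-3.11} of $\alpha_{0}$ excludes solutions with $0\le u\le R$ at $\alpha=\alpha_{0}$ just as in Theorem~\ref{MainTheorem}. Having verified $(f_{1})$, $(f_{2})$, $(f_{3})$, $(H_{r})$ and $(H_{R})$, the conclusion follows from the periodic version of Theorem~\ref{th-2.1} furnished by Remark~\ref{rem-2.1}, the maximum principle delivering the strict positivity of the solution produced.
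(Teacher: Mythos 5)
Your architecture matches the paper's proof of Theorem~\ref{th-Lienard} almost exactly: the same splitting $h(s)=c+(h(s)-c)$ with $c:=h(0)$ absorbed into the linear operator via Remark~\ref{rem-2.1}, the same nonlinearity $f(x,s,\xi)=(h(s)-c)\xi+a(x)g(s)$, and the same $(H_{r})$/$(H_{R})$ scheme. Your verification of $(H_{r})$ is correct and in fact slightly more direct than the paper's: you integrate the first-order equation for $v_{n}'$ from one of its zeros using the integrating factor $\exp(\int c_{n})$, whereas the paper multiplies the normalized equation by $v_{n}$, integrates to get $\|v_{n}'\|_{L^{2}}\to 0$, deduces $\|v_{n}''\|_{L^{1}}\to 0$, and only then concludes $v_{n}'\to 0$ uniformly; both routes yield \eqref{eq-3.8} and the regular-oscillation contradiction. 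Your treatment of the intervals where $a\leq 0$ and of the final exclusion at $\alpha=\alpha_{0}$ also agrees with the paper.

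The gap is in the core of $(H_{R})$. The integrating factor $E(x)=\exp\bigl(\int_{0}^{x}h(u(t))\,dt\bigr)$ is indeed the right device (the paper's auxiliary function $\Phi=Eu'$ and its $p_{n}$ are exactly this), but your chain ``$\max_{I_{i}}u$ large $\Rightarrow$ effective weight $E\,a\,g(u)/u$ above the eigenvalue threshold $\Rightarrow$ Sturm comparison forces a zero in $I_{i}$'' is missing its middle support. The coefficient $E\,a\,g(u)/u$ is large only \emph{where $u$ itself is large}, and knowing only that the maximum of $u_{n}$ over $I_{i}$ blows up does not yet say that $u_{n}$ stays large on any \emph{fixed} subinterval of $I_{i}$ on which a comparison eigenvalue problem can be posed. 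In Lemma~\ref{lemma_RJ} this spreading step is precisely what concavity provides (the hat-function bound $\tilde{u}_{n}(x)\geq \hat{R}_{n}\min\{x-x_{1},x_{2}-x\}/(x_{2}-x_{1})$); with the convective term $u$ is no longer concave on $I_{i}$, so the lemma cannot be invoked ``structurally'' and this estimate must be re-proved. That is exactly where the paper spends most of its effort: the monotonicity of $\Phi=Eu'$ on $J$ yields the derivative bounds $|u'(x)|\leq u(x)e^{CT}/\varepsilon$ (the argument from \cite{BoFeZa-inp2014}), which are then used to show that the set $\{u_{n}>\tilde{R}\}$ contains the fixed subinterval $J^{\varepsilon}$ for all large $n$; only after that does the Pr\"{u}fer/Sturm comparison apply. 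Your own tool (monotonicity of $Eu'$ on $I_{i}$) is what fills this hole, but as written the step is simply absent. A second, smaller imprecision: the comparison cannot be made literally against the eigenfunction of \eqref{eq-eig}, because the leading coefficient of your Sturm--Liouville form is $E\not\equiv 1$ and depends on the unknown solution; one must compare with a fixed modified problem --- the paper uses $\bigl(e^{Cx}\varphi'\bigr)'+e^{-Cx}\lambda a(x)\varphi=0$ on $J^{\varepsilon}$ --- or at least rescale the eigenvalue by the two-sided bounds on $E$. This second point is harmless here, since $g(s)/s\to+\infty$ makes any finite threshold reachable, but it needs to be said; the first point is a genuine missing step.
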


\begin{proof} We follow the same pattern as the proof of Theorem~\ref{MainTheorem}. In particular,
we are going to show how to achieve the same main steps and formulas in that proof.

First of all, we define $f\colon\mathopen{[}0,T\mathclose{]}\times {\mathbb{R}}^{+} \times {\mathbb{R}} \to {\mathbb{R}}$ as
\begin{equation*}
f(x,s,\xi) = (h(s)-c)\xi + a(x)g(s), \quad \text{with }\; c:= h(0),
\end{equation*}
and we remark that $f$ is a $L^{1}$-Carath\'{e}odory function satisfying $(f_{1})$, $(f_{2})$ and $(f_{3})$.
In this manner, problem \eqref{BVP-Lienard} is of the form
\begin{equation*}
\begin{cases}
\, u'' + c u' + f(x,u,u') = 0, \quad 0 < x < T, \\
\, u(0) = u(T), \quad
\, u'(0) = u'(T),
\end{cases}
\end{equation*}
which is of the same type of \eqref{BVP-sec2} with $u\mapsto -(u'' + c u')$ as differential operator.
The thesis will be reached using Theorem~\ref{th-2.1} with Remark~\ref{rem-2.1}.
In order to avoid unnecessary repetitions, from now on in the proof, all the solutions that we consider
satisfy the $T$-periodic boundary conditions.

\medskip

\noindent
\textit{Verification of $(H_{r})$.}
Observe that \eqref{eq-fs} is satisfied.
We claim that there exists $r_{0}>0$ such that for all $0<r\leq r_{0}$ and
for all $\vartheta\in\mathopen{]}0,1\mathclose{]}$ there are no positive solutions $u(x)$ of
\begin{equation*}
\, u'' + c u' + \vartheta f(x,u,u') = 0
\end{equation*}
such that $\|u\|_{\infty}=r$.
By contradiction, suppose the claim is not true. Then for all $n\in{\mathbb{N}}$
there exist $0<r_{n} <1/n$, $\vartheta_{n}\in\mathopen{]}0,1\mathclose{]}$ and $u_{n}(x)$ positive solution of
\begin{equation}\label{eq-4.7}
u'' + c u' + \vartheta_{n}(h(u)-c)u' + \vartheta_{n} a(x)g(u)=0
\end{equation}
such that $\|u_{n}\|_{\infty} = r_{n}$.

Integrating \eqref{eq-4.7} on $\mathopen{[}0,T\mathclose{]}$ and using the
periodic boundary conditions, we obtain again \eqref{eq-3.4}.
We define
\begin{equation*}
v_{n}(x) := \dfrac{u_{n}(x)}{\|u_{n}\|_{\infty}}
\end{equation*}
and, dividing \eqref{eq-4.7} by $r_{n} = \|u_{n}\|_{\infty}$, we get
\begin{equation}\label{eq-4.8}
v''_{n} + c v'_{n} + \vartheta_{n} (h(u_{n}(x))-c)v_{n}'+ \vartheta_{n}a(x)q(u_{n}(x))v_{n}=0,
\end{equation}
where $h(u_{n}(x))-c \to 0$ and $q(u_{n}(x)):= {g(u_{n}(x))}/{u_{n}(x)} \to 0$, uniformly on $\mathopen{[}0,T\mathclose{]}$ as $n\to \infty$.
Multiplying equation \eqref{eq-4.8} by $v_{n}$ and integrating on $\mathopen{[}0,T\mathclose{]}$, we obtain
\begin{equation*}
\|v'_{n}\|_{L^{2}}^{2} \leq \|a\|_{L^{1}} \sup_{x\in \mathopen{[}0,T\mathclose{]}} |q(u_{n}(x))|\to 0, \quad \text{ as } \; n\to \infty.
\end{equation*}
Using this information on \eqref{eq-4.8}, we see that $\|v''_{n}\|_{L^{1}}\to 0$ as $n\to \infty$. From this fact and observing that
$v_{n}'$ must vanish at some point (by Rolle's theorem), we obtain that
$v_{n}'(x)\to 0$ (as $n\to \infty$) uniformly on $\mathopen{[}0,T\mathclose{]}$ and thus \eqref{eq-3.8} follows.
{}From \eqref{eq-3.4} and \eqref{eq-3.8} we conclude exactly as in the verification of $(H_{r})$ in the proof of Theorem~\ref{MainTheorem}.

\medskip

\noindent
\textit{Verification of $(H_{R})$.}
We choose the same function $v(x)$ as in the proof of Theorem~\ref{MainTheorem} and observe that
the equation in \eqref{eq-2.10} now reads as
\begin{equation}\label{eq-4.9}
u'' + h(u)u' + a(x) g(u) + \alpha v(x) = 0.
\end{equation}
We also fix a constant $C > 0$ such that
\begin{equation}\label{eq-4.10}
|h(s)|\leq C, \quad \forall\, s \geq 0.
\end{equation}
Following the proof of Theorem~\ref{MainTheorem}, we choose an interval $J$ among the intervals $I_{i}$.
We look for a bound $R_{J}>0$ such that any non-negative solution $u(x)$ of \eqref{eq-4.9}, with $\alpha \geq 0$, satisfies $\max_{x\in J} u(x) < R_{J}$.
For notational convenience, we set $J = \mathopen{[}\sigma,\tau\mathclose{]}$ and let $0<\varepsilon<(\tau-\sigma)/2$ be fixed such that
\begin{equation*}
a(x)\not\equiv0 \quad \text{on } J^{\varepsilon},
\end{equation*}
where $J^{\varepsilon}:=\mathopen{[}\sigma_{0},\tau_{0}\mathclose{]}\subseteq \mathopen{[}\sigma,\tau\mathclose{]}$
with $\sigma_{0} - \sigma = \tau - \tau_{0} = \varepsilon$.

Arguing as in \cite{BoFeZa-inp2014}, we can prove that
$u'(x) \leq u(x) e^{CT}/\varepsilon$, for all $x\in \mathopen{[}\sigma_{0},\tau\mathclose{]}$ such that $u'(x) \geq 0$, and also
$|u'(x)| \leq u(x) e^{CT}/\varepsilon$, for all $x\in \mathopen{[}\sigma,\tau_{0}\mathclose{]}$ such that $u'(x) \leq 0$.
The proof follows the same argument as in \cite{BoFeZa-inp2014}, observing that the auxiliary function
\begin{equation*}
\Phi \colon x \mapsto u'(x) \exp\biggl{(}\int_{0}^{x} h(u(\xi))~\!d\xi\biggr{)}
\end{equation*}
is non-increasing on $J$.

Let $\hat{\lambda}$ be the first (positive) eigenvalue of the eigenvalue problem
\begin{equation*}
\begin{cases}
\, \bigl{(}e^{Cx} \varphi'\bigr{)}' + e^{-Cx} \lambda a(x) \varphi = 0 \\
\, \varphi(\sigma_{0})=\varphi(\tau_{0})=0.
\end{cases}
\end{equation*}
We fix a constant $M>0$ such that
\begin{equation*}
M > \hat{\lambda}.
\end{equation*}
{}From \eqref{eq-g2} it follows that there exists a constant $\tilde{R}=\tilde{R}(M)>0$ such that
\begin{equation*}
g(s) > M s, \quad \forall \, s\geq \tilde{R}.
\end{equation*}

By contradiction, suppose there is not a constant $R_{J}>0$ with the properties listed above. So, for each integer $n>0$
there exists a solution $u_{n}\geq0$ of \eqref{eq-4.9} with $\max_{x\in J}u_{n}(x)=:\hat{R}_{n}>n$.
For each $n > \tilde{R}$ we take $\hat{x}_{n} \in J$ such that $u_{n}(\hat{x}_{n}) = \hat{R}_{n}$ and let
$\mathopen{]}\varsigma_{n},\omega_{n}\mathclose{[}\subseteq J$ be the intersection with $\mathopen{]}\sigma,\tau\mathclose{[}$
of the maximal open interval containing $\hat{x}_{n}$ and such that $u_{n}(x) > \tilde{R}$ for all $x\in\mathopen{]}\varsigma_{n},\omega_{n}\mathclose{[}$.
We fix an integer $N$ such that
\begin{equation*}
N > \tilde{R} + \dfrac{\tilde{R} \, T e^{2CT}}{\varepsilon}
\end{equation*}
and we claim that $\mathopen{]}\varsigma_{n},\omega_{n}\mathclose{[} \supseteq \mathopen{[}\sigma_{0},\tau_{0}\mathclose{]}$, for each $n \geq N$.
Suppose by contradiction that $\sigma_{0} \leq \varsigma_{n}$. In this case, we find that $u_{n}(\varsigma_{n}) =\tilde{R}$
and $u'_{n}(\varsigma_{n}) \geq 0$.
Moreover, $u'_{n}(\varsigma_{n}) \leq \tilde{R}e^{CT}/\varepsilon$. Using the monotonicity
of $\Phi$, $\Phi(x) \leq \Phi(\varsigma_{n})$ for every $x \in \mathopen{[}\varsigma_{n},\hat{x}_{n}\mathclose{]}$
and therefore, using also \eqref{eq-4.10}, we find $u'(x) \leq \tilde{R}e^{2CT}/\varepsilon$
for every $x \in \mathopen{[}\varsigma_{n},\hat{x}_{n}\mathclose{]}$.
Finally, an integration on $\mathopen{[}\varsigma_{n},\hat{x}_{n}\mathclose{]}$ yields
\begin{equation*}
n < \hat{R}_{n} = u_{n}(\hat{x}_{n}) \leq \tilde{R} + \dfrac{\tilde{R} \, T e^{2CT}}{\varepsilon},
\end{equation*}
hence a contradiction, since $n\geq N$. A symmetric argument provides a contradiction if we suppose that $\omega_{n} \leq \tau_{0}$.
This proves the claim.

So, we can fix an integer $N > \tilde{R}$
such that $u_{n}(x)>\tilde{R}$ for every $x\in J^{\varepsilon}$ and for $n\geq N$. The function $u_{n}(x)$, being a solution of
equation \eqref{eq-4.9}, also satisfies
\begin{equation*}
\begin{cases}
\, u_{n}'(x) = \dfrac{y_{n}(x)}{p_{n}(x)} \vspace*{2pt} \\
\, y_{n}'(x) = - H_{n}(x,u_{n}(x)),
\end{cases}
\end{equation*}
where
\begin{equation*}
p_{n}(x):= \exp\biggl{(} \int_{0}^{x} h(u_{n}(\xi))~\!d\xi \biggr{)}
\end{equation*}
and
\begin{equation*}
H_{n}(x,u_{n}(x)):= \exp\biggl{(} \int_{0}^{x} h(u_{n}(\xi))~\!d\xi \biggr{)}\Bigl(a(x) g(u_{n}(x)) + \alpha v(x)\Bigr).
\end{equation*}
Passing to the polar coordinates, via a Pr\"{u}fer transformation, we consider
\begin{equation*}
p_{n}(x)u'_{n}(x) = r_{n}(x) \cos \vartheta_{n}(x), \qquad u_{n}(x) = r_{n}(x) \sin \vartheta_{n}(x),
\end{equation*}
and obtain, for every $x\in J^{\varepsilon}$, that
\begin{equation*}
\begin{aligned}
\vartheta'_{n}(x) &= \dfrac{\cos^{2} \vartheta_{n}(x)}{p_{n}(x)} + \dfrac{H_{n}(x,u_{n}(x))}{u_{n}(x)} \sin^{2}\vartheta_{n}(x)
\\                &\geq \dfrac{\cos^{2} \vartheta_{n}(x)}{p_{n}(x)} + M p_{n}(x) a(x) \sin^{2}\vartheta_{n}(x).
\end{aligned}
\end{equation*}
We also consider the linear equation
\begin{equation}\label{eq-4.13}
\bigl{(}e^{Cx} u'\bigr{)}' + e^{-Cx} M a(x) u = 0
\end{equation}
and its associated angular coordinate $\vartheta(x)$ (via the Pr\"{u}fer transformation), which satisfies
\begin{equation*}
\vartheta'(x) = \dfrac{\cos^{2} \vartheta(x)}{e^{Cx}} + e^{-Cx} M a(x) \sin^{2}\vartheta(x).
\end{equation*}
Note also that the angular functions $\vartheta_{n}$ and $\vartheta$ are non-decreasing in $J^{\varepsilon}$.
Using a classical comparison result in the frame of Sturm's theory (cf.~\cite[Chap.~8, Theorem~1.2]{CoLe-1955}), we
find that
\begin{equation}\label{eq-4.14}
\vartheta_{n}(x) \geq  \vartheta(x),  \quad \forall \, x\in J^{\varepsilon},
\end{equation}
if we choose $\vartheta(\sigma_{0}) = \vartheta_{n}(\sigma_{0})$.
Consider now a fixed $n \geq N$. Since $u_{n}(x) \geq \tilde{R}$ for every $x \in J^{\varepsilon}$, we must have
\begin{equation}\label{eq-4.15}
\vartheta_{n}(x)\in\mathopen{]}0,\pi\mathclose{[},\quad \forall \, x\in J^{\varepsilon}.
\end{equation}
On the other hand, by the choice of $M>0$, we know that any non-negative solution $u(x)$ of \eqref{eq-4.13} with $u(\sigma_{0}) > 0$
must vanish at some point
in $\mathopen{]}\sigma_{0},\tau_{0}\mathclose{[}$ (see \cite[Chap.~8, Theorem~1.1]{CoLe-1955}).
Therefore, from $\vartheta(\sigma_{0}) = \vartheta_{n}(\sigma_{0}) \in \mathopen{]}0,\pi\mathclose{[}$,
we conclude that there exists $x^{*}\in \mathopen{]}\sigma_{0},\tau_{0}\mathclose{[}$ such that $\vartheta(x^{*}) = \pi$.
By \eqref{eq-4.14} we have that $\vartheta_{n}(x^{*}) \geq \pi$, which contradicts \eqref{eq-4.15}.

By the arbitrary choice of $J$ among the intervals $I_{1},\ldots,I_{m}$, for each $i=1,\ldots,m$ we obtain the existence of a constant $R_{I_{i}}>0$
such that any non-negative solution $u(x)$ of \eqref{eq-4.9}, with $\alpha \geq 0$, satisfies $\max_{x\in I_{i}} u(x) < R_{I_{i}}$.
Finally, let us fix a constant $R > r_{0}$ (with $r_{0}$ coming from the first part of the proof) as in \eqref{eq-R}, so that
$R \geq R_{I_{i}}$ for all $i=1,\ldots,m$.

Consider now a (maximal) interval $\mathcal{J}$ contained in $\mathopen{[}0,T\mathclose{]} \setminus \bigcup_{i=1}^{m} I_{i}$ where
$a(x) \leq 0$. For simplicity in the exposition, we suppose that $\mathcal{J}$ lies between two intervals $I_{i}$ where $a(x)\geq 0$,
so that $\mathcal{J}= \mathopen{]}\tau',\sigma'\mathclose{[}$, with $\tau' \in I_{k}$ and $\sigma'\in I_{k+1}$.

Let $u(x)$ be a non-negative solution of \eqref{eq-4.9}.
For $x\in \mathcal{J}$, equation \eqref{eq-4.9} reads as
\begin{equation*}
u'' + h(u)u' + a(x) g(u) = 0
\end{equation*}
and therefore the auxiliary function $\Phi$ is non-decreasing on $\mathcal{J}$. If $u'(x^{*}) \geq 0$,
for some $x^{*} \in \mathopen{[}\tau',\sigma'\mathclose{[}$, then $u'(x) \geq 0$ for all $x\in \mathopen{[}x^{*},\sigma'\mathclose{]}$,
hence $u(x^{*}) \leq u(\sigma') < R$ (because $\sigma'$ belongs to some interval $I_{i}$, where $u(x)$ is bounded by $R$).
Similarly, if $u'(x^{*}) \leq 0$, for some $x^{*} \in \mathopen{]}\tau',\sigma'\mathclose{]}$,
then $u'(x) \leq 0$ for all $x\in \mathopen{[}\tau',x^{*}\mathclose{]}$,
hence $u(x^{*}) \leq u(\tau') < R$ (because $\tau'$ belongs to some interval $I_{i}$).
Thus, we easily deduce that $u(x) < R$ for all $x\in \text{\rm cl}(\mathcal{J}) = \mathopen{[}\tau',\sigma'\mathclose{]}$.
The same argument can be easily adapted if $\mathcal{J} = \mathopen{[}0,\sigma'\mathclose{[}$ or
$\mathcal{J} = \mathopen{]}\tau',T\mathclose{]}$ (with, respectively, $\sigma' \in I_{1}$ or $\tau' \in I_{m}$),
using the $T$-periodic boundary conditions.

In this manner, we have found a constant $R > r_{0}$ such that any non-negative solution $u(x)$ of
\eqref{eq-4.9}, with $\alpha \geq 0$, satisfies
\begin{equation*}
\|u\|_{\infty} < R.
\end{equation*}
This shows that the first part of $(H_{R})$ is valid independently of the choice of $\alpha_{0}$.

Now we fix $\alpha_{0}$ as in \eqref{eq-3.11}. It remains to verify that for
$\alpha = \alpha_{0}$ there are no solutions $u(x)$ of \eqref{eq-4.9} with
$0\leq u(x) \leq R$ on $\mathopen{[}0,T\mathclose{]}$. Indeed, if there were, integrating on $\mathopen{[}0,T\mathclose{]}$ the differential equation
and using the boundary conditions, we obtain
\begin{equation*}
\alpha \|v\|_{L^{1}} = \alpha \int_{0}^{T}v(x)~\!dx \leq \int_{0}^{T}|a(x)|g(u(x))~\!dx \leq \|a\|_{L^{1}}\,\max_{0\leq s \leq R}g(s),
\end{equation*}
which leads to a contradiction with respect to the choice of $\alpha_{0}$. Thus $(H_{R})$ is verified.

\medskip

\noindent
Having verified $(H_{r})$ and $(H_{R})$, the thesis follows from Theorem~\ref{th-2.1} with Remark~\ref{rem-2.1}.
\end{proof}

\section{Final remarks}\label{section5}

In the setting of the Dirichlet (two-point) boundary value problem associated to equation
\begin{equation*}
u'' + a(x) g(u) = 0,
\end{equation*}
it is known that, in the superlinear case, multiple positive solutions can be obtained when the
weight is sufficiently negative in some intervals.
More precisely, writing explicitly the dependence of $a(x)$ on a real parameter $\mu > 0$ which controls the
negative part, the following result can be given for the boundary value problem
\begin{equation}\label{eq-5.1}
\begin{cases}
\, u'' + a_{\mu}(x) g(u) = 0 \\
\, u(0) = u(T) = 0,
\end{cases}
\end{equation}
with
\begin{equation*}
a_{\mu}(x):= a^{+}(x) - \mu \, a^{-}(x).
\end{equation*}

\begin{theorem}\label{th-5.1}
Let $g(s)$ be a continuous function satisfying $(g_{1})$ and such that
\begin{equation*}
\lim_{s\to 0^{+}} \dfrac{g(s)}{s} = 0, \qquad \lim_{s\to +\infty} \dfrac{g(s)}{s} = +\infty.
\end{equation*}
Then there exists $\mu^{*}> 0$ such that for each $\mu > \mu^{*}$ problem \eqref{eq-5.1} has at least
$2^{m} -1$ positive solutions, where $m \geq 1$ is the number of positive humps of the weight function which are
separated by $m-1$ negative humps.
\end{theorem}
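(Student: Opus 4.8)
The plan is to combine the degree-theoretic scheme of the present paper with the combinatorial argument of \cite{FeZa-pre2014}, exploiting the fact that for the Dirichlet problem the operator $u\mapsto -u''$ (subject to $u(0)=u(T)=0$) is \emph{invertible}: thus \eqref{eq-5.1} can be rewritten directly as a fixed point problem $u=\Phi_{\mu}(u)$ in $X=\mathcal{C}^{1}_{0}(\mathopen{[}0,T\mathclose{]})$ for a completely continuous operator $\Phi_{\mu}$, and plain Leray--Schauder degree replaces the coincidence degree. As in Section~\ref{section2}, I would first extend $g$ to the whole real line by setting $\tilde{g}(s)=-s$ for $s\leq 0$, so that every nontrivial solution of the modified problem is, by the maximum principle and $(g_{1})$, strictly positive; it therefore suffices to produce $2^{m}-1$ nontrivial fixed points of $\Phi_{\mu}$.

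Let $I_{1}^{+},\ldots,I_{m}^{+}$ be the intervals carrying the positive humps of $a^{+}(x)$ (these are unaffected by $\mu$). The first ingredient is a pair of thresholds $0<r<R$. Using $\lim_{s\to 0^{+}}g(s)/s=0$ one fixes $r>0$ small so that $\text{deg}(Id-\Phi_{\mu},B(0,r),0)=1$; using $\lim_{s\to+\infty}g(s)/s=+\infty$ together with the a priori bounds of Lemma~\ref{lemma_RJ} applied on each $I_{i}^{+}$ (available since $g_{\infty}=+\infty>\lambda_{1}^{i}$ for every $i$) one fixes $R>r$ so large that any positive solution obeys $\max_{I_{i}^{+}}u<R$ and $\text{deg}(Id-\Phi_{\mu},B(0,R),0)=0$, by the argument of Lemma~\ref{lemma-2.2}. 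For each nonempty subset $\mathcal{S}\subseteq\{1,\ldots,m\}$ I would then introduce the open bounded set
\begin{equation*}
\Omega_{\mathcal{S}}:=\Bigl\{u\in X \colon \max_{I_{i}^{+}}u<r \text{ for } i\notin\mathcal{S}, \;\; r<\max_{I_{i}^{+}}u<R \text{ for } i\in\mathcal{S}, \;\; \|u\|_{\infty}<R, \; \|u'\|_{\infty}<M\Bigr\},
\end{equation*}
with $M$ a global Nagumo-type bound. These $2^{m}-1$ sets are pairwise disjoint.

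The core of the argument is to show that, for $\mu$ large, the degree on each $\Omega_{\mathcal{S}}$ equals $(-1)^{|\mathcal{S}|}$, hence is nonzero. The strategy is a \emph{product formula}: the degree should factorize over the humps, each hump in $\mathcal{S}$ contributing the single-hump annular degree $0-1=-1$ (the difference between the ``large'' and ``small'' regimes, computed exactly as in the proof of Theorem~\ref{th-2.1}) and each hump outside $\mathcal{S}$ contributing the factor $1$ (the degree on a small ball). Since $\mathcal{S}\neq\emptyset$ forces at least one factor $-1$, each $\Omega_{\mathcal{S}}$ carries a nontrivial, hence strictly positive, solution, and disjointness yields the claimed $2^{m}-1$ distinct positive solutions (the missing cell, ``small on every hump'', contains only the trivial solution and accounts for the $-1$).

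The main obstacle is precisely this factorization, and it is where the smallness of $1/\mu$ enters. One must rule out solutions on the boundaries $\partial\Omega_{\mathcal{S}}$, i.e.\ with $\max_{I_{i}^{+}}u=r$ for some $i$, and certify that the ``small'' and ``large'' regimes genuinely decouple across the intervening negative humps. Here the strong negative part, controlled by $\mu$, is decisive: where $a_{\mu}(x)\leq 0$ one has $u''=-a_{\mu}(x)g(u)\geq 0$, so $u$ is convex and cannot generate spurious intermediate maxima, and a quantitative version of this convexity (with constant depending on $\mu$) forces a solution below $r$ at the endpoints of a negative hump to stay below $r$ throughout, while a solution that is large on an adjacent positive hump must decay sharply. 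Making these estimates uniform and organizing them into a homotopy that detaches the humps, so that the Leray--Schauder degree literally splits as a product, is the delicate technical point; it is carried out in detail in \cite{FeZa-pre2014}, and I would adapt those estimates to the present weight $a_{\mu}$.
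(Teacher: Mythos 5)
First, a point of calibration: the paper itself contains no proof of Theorem~\ref{th-5.1}. Section~\ref{section5} states it and refers to \cite{FeZa-pre2014} even for ``the precise technical assumptions which are needed'', so your proposal can only be measured against the strategy of that cited work, which the Introduction summarizes: for Dirichlet conditions the operator $u\mapsto -u''$ is invertible, the problem becomes a genuine fixed point equation, and plain Leray--Schauder degree replaces coincidence degree. Your outline (thresholds $r<R$, cells $\Omega_{\mathcal{S}}$ indexed by nonempty $\mathcal{S}\subseteq\{1,\dots,m\}$, degree $(-1)^{|\mathcal{S}|}$, largeness of $\mu$ used to exclude solutions with $\max_{I_i^{+}}u=r$) is exactly that strategy, and deferring the hump-decoupling estimates to \cite{FeZa-pre2014} leaves you no less complete than the paper's own treatment. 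One caveat on mechanism: there is no ``product formula'' for Leray--Schauder degree over regions of the domain $\mathopen{[}0,T\mathclose{]}$. The actual bookkeeping is additivity/excision plus inclusion--exclusion: one proves $\deg=1$ on the all-small cell, $\deg=0$ on each enlarged set $\Lambda_{\mathcal{S}}$ ($\mathcal{S}\neq\emptyset$) obtained by dropping the lower constraints $r<\max_{I_i^{+}}u$ (this is the Lemma~\ref{lemma-2.2}/\ref{lemma-2.3} argument with a bump $v$ supported in the humps of $\mathcal{S}$, together with the a priori bounds of Lemma~\ref{lemma_RJ}), and then $\deg(\Omega_{\mathcal{S}})=(-1)^{|\mathcal{S}|}$ follows by induction from $\sum_{\mathcal{J}\subseteq\mathcal{S}}(-1)^{|\mathcal{J}|}=0$. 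Your heuristic produces the right numbers, but the argument you would actually have to run is the additive one, not a factorization.

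The second caveat is a genuine error in your setup. You extend $g$ alone, setting $\tilde{g}(s)=-s$ for $s\leq 0$, and then invoke a maximum principle for $u''+a_{\mu}(x)\tilde{g}(u)=0$. Where $u<0$ this equation reads $u''-a_{\mu}(x)u=0$; on a negative hump $-a_{\mu}=\mu\,a^{-}$, so negative arcs solve the linear equation $u''+\mu\,a^{-}(x)u=0$ with a \emph{large positive} weight. As soon as $\mu$ is so large that the first Dirichlet eigenvalue of that hump drops below $1$, this equation admits solutions that are strictly negative between two zeros (the model case $a^{-}\equiv 1$ gives $-\sin(\sqrt{\mu}\,x)$), so the hypothesis $h(x,s)>0$ for $s<0$ of Lemma~\ref{Maximum principle}$(i)$ fails and nonnegativity of solutions of the modified problem is unsupported --- precisely in the regime $\mu>\mu^{*}$ the theorem is about. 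The fix is the one the paper itself uses in Section~\ref{section2.2}: extend the \emph{whole} nonlinearity, $\tilde{f}(x,s)=a_{\mu}(x)g(s)$ for $s\geq 0$ and $\tilde{f}(x,s)=-s$ for $s\leq 0$, independently of the sign of the weight; then any negative arc satisfies $u''=u<0$, is concave with zero endpoint values, and cannot exist. (A further minor point: with Dirichlet conditions the solutions produced are positive only on $\mathopen{]}0,T\mathclose{[}$, as the paper notes after Theorem~\ref{th-5.1}, not ``strictly positive'' on the closed interval.)
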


For this result and the precise technical assumptions which are needed, see \cite{FeZa-pre2014}.
In this setting (due to the boundary conditions), by positive solutions we mean solutions which are positive on $\mathopen{]}0,T\mathclose{[}$.
Previous versions of this theorem have been obtained for $g(s) = s^{\gamma}$ with $\gamma > 1$
in \cite{GaHaZa-2003mod, GaHaZa-2004} in the ODE case (using the shooting method)
and in \cite{BoGoHa-2005} for PDEs (using a variational approach). Further progresses in this direction
have been achieved in \cite{GiGo-2009rse, GiGo-2009jde} for the Dirichlet problem for PDEs.

Concerning the boundary conditions considered in the present paper, multiplicity results have been recently provided
in \cite{Bo-2011} for the Neumann problem (using the shooting method) and in \cite{BaBoVe-pre2014}
for the periodic problem (using a variational approach). It seems reasonable to adapt our arguments to problem
\begin{equation*}
\begin{cases}
\, u'' + a_{\mu}(x) g(u) = 0, \quad 0 < x < T, \\
\, {\mathscr{B}}(u,u') = \underline{0},
\end{cases}
\end{equation*}
in order to obtain multiplicity results when $\mu > 0$ is sufficiently large.

\section{Appendix}\label{Appendix}

In this section we present two results concerning the solutions of the boundary value problem
\begin{equation}\label{BVP-Appendix}
\begin{cases}
\, u''+h(x,u)=0 \\
\, {\mathscr{B}}(u,u') = \underline{0},
\end{cases}
\end{equation}
where $h\colon \mathopen{[}0,T\mathclose{]}\times {\mathbb{R}}\to {\mathbb{R}}$ is a $L^{1}$-Ca\-ra\-th\'{e}o\-dory function.
As in rest of the paper, by ${\mathscr{B}}(u,u') = \underline{0}$ we mean the Neumann or the periodic boundary conditions
on $\mathopen{[}0,T\mathclose{]}$.

The first result is a \textit{maximum principle} that ensures the non-negativity or the positivity of the solutions to problem \eqref{BVP-Appendix}.
In the applications, for example, we have $h(x,s)= a(x)g(s)$.

\begin{lemma}\label{Maximum principle}
Let $h\colon \mathopen{[}0,T\mathclose{]}\times {\mathbb{R}}\to {\mathbb{R}}$ be a $L^{1}$-Ca\-ra\-th\'{e}o\-dory function.
\begin{itemize}
\item [$(i)$]
    If
    \begin{equation*}
    h(x,s)>0, \quad \text{a.e. } x\in\mathopen{[}0,T\mathclose{]}, \text{ for all } s<0,
    \end{equation*}
    then any solution of \eqref{BVP-Appendix} is non-negative on $\mathopen{[}0,T\mathclose{]}$.
\item [$(ii)$]
    If $h(x,0)\equiv0$ and
    there exists $q\in L^{1}(\mathopen{[}0,T\mathclose{]},{\mathbb{R}}^{+})$ such that
    \begin{equation*}
     \limsup_{s\to0^{+}}\dfrac{|h(x,s)|}{s} \leq q(x), \quad \text{uniformly a.e. } x\in\mathopen{[}0,T\mathclose{]},
    \end{equation*}
    then every nontrivial non-negative solution $u(x)$ of \eqref{BVP-Appendix} satisfies $u(x)>0$,
    for all $x\in\mathopen{[}0,T\mathclose{]}$.
\end{itemize}
\end{lemma}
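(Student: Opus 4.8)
The plan is to prove the two parts separately, running the Neumann and the periodic cases in parallel within each part. For part $(i)$, I would argue by contradiction, assuming that a solution $u$ takes a negative value, and exploit the sign condition on $h$: since $u'' = -h(x,u) < 0$ a.e.\ on the open set $\{u<0\}$, the function $u$ is strictly concave on each connected component of this set. If a component $\mathopen{]}\alpha,\beta\mathclose{[}$ is a proper subinterval of $\mathopen{]}0,T\mathclose{[}$, then $u(\alpha) = u(\beta) = 0$ by maximality and continuity, and concavity forces $u \ge 0$ on $\mathopen{[}\alpha,\beta\mathclose{]}$, contradicting $u<0$ in the interior. It then remains to rule out components meeting $\{0,T\}$. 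For the Neumann conditions I would use $u'(0)=0$ (resp.\ $u'(T)=0$): on a component of the form $\mathopen{[}0,\beta\mathclose{[}$ concavity makes $u'$ non-increasing, so $u'\le u'(0)=0$, hence $u$ is non-increasing and $u(\beta)\le u(0)<0$, against $u(\beta)=0$; the case $\mathopen{]}\alpha,T\mathclose{]}$ is symmetric, and the case $\{u<0\}=\mathopen{[}0,T\mathclose{]}$ is excluded since $u$ would then be constant (forcing $u''=0$) while $u''<0$. For the periodic conditions I would pass to the circle $\mathbb{R}/T\mathbb{Z}$: if $\{u<0\}$ is the whole circle then $\int_{0}^{T} u'' = u'(T)-u'(0)=0$ contradicts $u''<0$ a.e., and otherwise every component is a proper arc with vanishing endpoint values, so concavity again gives a contradiction.

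For part $(ii)$ I would establish a \emph{strong maximum principle} by showing that the zero set $Z:=\{x\in\mathopen{[}0,T\mathclose{]}\colon u(x)=0\}$ is open; since $Z$ is closed by continuity, connectedness of $\mathopen{[}0,T\mathclose{]}$ together with $u\not\equiv0$ then forces $Z=\emptyset$, i.e.\ $u>0$. First I observe that at any zero $x_{0}$ one has $u'(x_{0})=0$: for interior $x_{0}$ this holds because $x_{0}$ is a global minimum of the non-negative function $u$, while at a boundary zero the Neumann condition gives it directly and the periodic condition gives it by combining $u'(0)=u'(T)$ with the one-sided sign of the derivative at a boundary minimum. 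Fixing such a zero $x_{0}$, the hypothesis on $h$ provides, for a chosen $\varepsilon>0$ and the corresponding $\delta>0$, the bound $|h(x,s)|\le (q(x)+\varepsilon)s$ for a.e.\ $x$ and all $0\le s\le\delta$; on a neighborhood $I$ of $x_{0}$ where $0\le u\le\delta$ this yields $|u''|\le (q(x)+\varepsilon)u$ a.e.

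The core step is then a \emph{Gronwall-type} uniqueness argument for the trivial solution. Writing $u(x)=\int_{x_{0}}^{x}u'$ and $u'(x)=\int_{x_{0}}^{x}u''$ (using $u(x_{0})=u'(x_{0})=0$), I would estimate $\sup_{\mathopen{[}x_{0},x_{0}+\eta\mathclose{]}}|u'|$ by $\bigl(\int_{x_{0}}^{x_{0}+\eta}(q(t)+\varepsilon)(t-x_{0})\,dt\bigr)\,\sup_{\mathopen{[}x_{0},x_{0}+\eta\mathclose{]}}|u'|$, and since $q\in L^{1}$ the prefactor is bounded by $\eta\bigl(\|q\|_{L^{1}}+\varepsilon T\bigr)$, which is $<1$ once $\eta$ is small enough (and $\mathopen{[}x_{0},x_{0}+\eta\mathclose{]}\subseteq I$). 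This forces $u'\equiv0$, hence $u\equiv0$, on $\mathopen{[}x_{0},x_{0}+\eta\mathclose{]}$, and a symmetric estimate to the left covers $\mathopen{[}x_{0}-\eta,x_{0}\mathclose{]}$, so $Z$ is open. The main obstacle I anticipate is the bookkeeping in part $(i)$: enumerating the possible shapes of the components of $\{u<0\}$ that touch $\{0,T\}$ and making the boundary conditions interact correctly with concavity, with the periodic case handled on the circle; by contrast, the estimate in part $(ii)$ is routine once $u'(x_{0})=0$ has been secured at every zero.
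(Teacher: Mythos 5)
Your proof is correct and follows essentially the same strategy as the paper's: part $(i)$ by a concavity/contradiction argument on the set $\{u<0\}$ with casework on whether components touch the endpoints, and part $(ii)$ by showing that $u$ vanishes identically near any zero (using $u(x_{0})=u'(x_{0})=0$ and the integrable bound $|h(x,s)|\leq(q(x)+\varepsilon)s$) and then concluding by connectedness of the zero set. The only cosmetic differences are that you treat the periodic case of $(i)$ on the circle ${\mathbb{R}}/T{\mathbb{Z}}$ rather than via the paper's propagation of concavity across the endpoint identification, and in $(ii)$ you use a small-interval contraction estimate on $\sup|u'|$ where the paper invokes the classical Gronwall inequality applied to $|u|+|u'|$; both variants are sound and, in fact, you justify the boundary-zero identity $u'(x_{0})=0$ more explicitly than the paper does.
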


\begin{proof}

\noindent $(i)$.\;
By contradiction, suppose that there exists a solution $u(x)$ of \eqref{BVP-Appendix}
and $\hat{x}\in\mathopen{[}0,T\mathclose{]}$ such that $u(\hat{x})<0$.
Let $\mathopen{]}x_{1},x_{2}\mathclose{[}\subseteq\mathopen{]}0,T\mathclose{[}$ be the maximal open interval containing $\hat{x}$
with $u(x)<0$, for all $x_{1}<x<x_{2}$.
Since $u''(x) < 0$ for a.e.~$x\in\mathopen{[}x_{1},x_{2}\mathclose{]}$,
an elementary convexity argument shows that $0 < x_{1} < x_{2} < T$ is not possible.
Similarly, also $u(x) < 0$ for all $x\in \mathopen{]}0,T\mathclose{[}$ can be excluded, otherwise,
$0 > \int_{0}^{T} u''(x) ~\!dx = u'(T) - u'(0)$, contradicting the boundary conditions.
Hence, there are only two possibilities: either $x_{1}=0$ and $x_{2} < T$, or $0 < x_{1}$ and $x_{2}=T$.
Suppose $x_{1}=0$ (the other case can be treated in a similar manner). In this case, $u(0) \leq 0$
and moreover $u'(0) > 0$ (otherwise, by concavity, one has $u(x) < 0$ for all $x\in \mathopen{]}0,T\mathclose{]}$,
a situation previously excluded). This already gives a contradiction with the Neumann boundary condition at $x=0$.
On the other hand, if we consider the periodic boundary condition, we have that $u(T)=u(0) \leq 0$ and $u'(T)=u'(0) > 0$.
Hence, by the concavity of $u$ on the intervals where $u(x)<0$, we obtain that $u(x)<0$ for every $x\in\mathopen{[}0,T\mathclose{[}$, a contradiction.

\smallskip

\noindent $(ii)$.\;
By contradiction, suppose that there exists a solution $u(x)\geq 0$ of \eqref{BVP-Appendix}
and $x^{*}\in\mathopen{[}0,T\mathclose{]}$ such that $u(x^{*})=0$ (so, $u'(x^{*})=0$).

We claim that there exists $\varepsilon>0$ such that $u(x)=0$, for all $x\in\mathopen{[}x^{*}-\varepsilon,x^{*}+\varepsilon\mathclose{]}$.
So that $u\equiv0$ on $\mathopen{[}0,T\mathclose{]}$, a contradiction.

{} From the hypotheses, we obtain that there exists $\delta>0$ such that
\begin{equation*}
|h(x,s)|\leq q_{1}(x)s, \quad \text{a.e. } x\in\mathopen{[}0,T\mathclose{]}, \; \forall \, 0 \leq s \leq \delta,
\end{equation*}
where $q_{1}(x):= q(x)+1$.
Using the continuity of $u(x)$, we fix $\varepsilon>0$ such that
$0\leq u(x)\leq\delta$, for all $x\in\mathopen{[}x^{*}-\varepsilon,x^{*}+\varepsilon\mathclose{]}$.

We use $\|(\xi_{1},\xi_{2})\| = |\xi_{1}| + |\xi_{2}|$ as a standard norm in ${\mathbb{R}}^{2}$.
For all $x\in\mathopen{]}x^{*},x^{*}+\varepsilon\mathclose{]}$ we have
\begin{equation*}
\begin{aligned}
0 \leq \|(u(x),u'(x))\| & = |u(x)| + |u'(x)| = u(x) + |u'(x)| =
\\ &   =  u(x^{*})+\int_{x^{*}}^{x}u'(\xi)~\!d\xi+\biggl{|}u'(x^{*})+\int_{x^{*}}^{x}-h(\xi,u(\xi))~\!d\xi\biggr{|}
\\ & \leq  \int_{x^{*}}^{x}|u'(\xi)|~\!d\xi+\int_{x^{*}}^{x}|h(\xi,u(\xi))|~\!d\xi
\\ & \leq  \int_{x^{*}}^{x}\Bigl[q_{1}(\xi)|u(\xi)|+|u'(\xi)|\Bigr]~\!d\xi
\\ & \leq  \int_{x^{*}}^{x}(q_{1}(\xi)+1)(|u(\xi)|+|u'(\xi)|)~\!d\xi.
\end{aligned}
\end{equation*}
Using the classical Gronwall's inequality, we attain
\begin{equation*}
0\leq u(x)\leq \|(u(x),u'(x))\| =0, \quad \forall \, x\in\mathopen{]}x^{*},x^{*}+\varepsilon\mathclose{]}.
\end{equation*}
With an analogous computation one can prove that $u(x)=0$ for all $x\in\mathopen{[}x^{*}-\varepsilon,x^{*}{[}$.
Hence the claim and $(ii)$ are proved.
\end{proof}

\begin{remark}\label{rem-6.1}
The maximum principle just presented can be also stated for the more general boundary value problem
\begin{equation*}
\begin{cases}
\, u'' + \tilde{f}(x,u,u') = 0, \quad 0 < x < T, \\
\, {\mathscr{B}}(u,u') = \underline{0},
\end{cases}
\end{equation*}
where $\tilde{f}\colon\mathopen{[}0,T\mathclose{]}\times {\mathbb{R}} \times {\mathbb{R}} \to {\mathbb{R}}$ is a $L^{p}$-Carath\'{e}odory function
as in Section~\ref{section2.2}, hence equal to $-s$ for $s\leq 0$ and satisfying conditions $(f_{1})$ and $(f_{2})$.
The proof of this result is the same as that just viewed with minor changes.
$\hfill\lhd$
\end{remark}

The following result provides a priori bounds for non-negative solutions on the intervals where $h(x,s)$ is non-negative.
This lemma is used in the verification of condition $(H_{R})$ in Theorem~\ref{MainTheorem} and Theorem~\ref{MainTheorem2}.
In a way, it is employed to compute the coincidence degree on large balls.

\begin{lemma}\label{lemma_RJ}
Let $h\colon \mathopen{[}0,T\mathclose{]}\times {\mathbb{R}}\to {\mathbb{R}}$ be a $L^{1}$-Ca\-ra\-th\'{e}o\-dory function.
Suppose there exists a closed interval $J \subseteq \mathopen{[}0,T\mathclose{]}$ such that
\begin{equation*}
h(x,s)\geq 0, \quad \text{a.e. } x\in J, \; \forall \, s\geq0;
\end{equation*}
and there is a measurable function $q_{\infty}\in L^{1}(J,{\mathbb{R}}^{+})$ with $q_{\infty}\not\equiv0$, such that
\begin{equation}\label{condinfty}
\liminf_{s\to+\infty}\dfrac{h(x,s)}{s}\geq q_{\infty}(x), \quad \text{uniformly a.e. } x\in J.
\end{equation}
Let $\mu_{J}$ be the first positive eigenvalue of the eigenvalue problem
\begin{equation*}
\varphi'' + \lambda q_{\infty}(x) \varphi = 0, \quad \varphi|_{\partial J}=0,
\end{equation*}
and suppose that $\mu_{J}<1$.
Then there exists $R_{J}>0$ such that for each Ca\-ra\-th\'{e}o\-dory function
$k\colon \mathopen{[}0,T\mathclose{]} \times{\mathbb{R}}^{+}\to {\mathbb{R}}$ with
\begin{equation*}
k(x,s)\geq h(x,s), \quad \text{a.e. } x\in J, \; \forall \, s\geq0,
\end{equation*}
every solution $u(x)\geq0$ of the BVP
\begin{equation}\label{BVP-k}
\begin{cases}
\, u''+ k(x,u)=0 \\
\, {\mathscr{B}}(u,u') = \underline{0}
\end{cases}
\end{equation}
satisfies $\max_{x\in J}u(x)<R_{J}$.
\end{lemma}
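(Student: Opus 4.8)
The plan is to argue by contradiction and to reduce the a priori bound to a Sturm-type comparison on a \emph{fixed} subinterval of $J$ on which the solution is forced to be large. Suppose no such $R_J$ exists: then for every $n$ there are an admissible Carath\'eodory function $k_n \ge h$ on $J\times{\mathbb{R}}^{+}$ and a non-negative solution $u_n$ of \eqref{BVP-k} with $R_n := \max_{x\in J} u_n(x) \to +\infty$. The first thing I would record is that each $u_n$ is \emph{concave} on $J$: since $k_n(x,s)\ge h(x,s)\ge 0$ for a.e.\ $x\in J$ and all $s\ge 0$, we have $u_n''(x) = -k_n(x,u_n(x)) \le 0$ a.e.\ on $J$. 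Concavity together with $u_n\ge 0$ yields a one-sided derivative estimate of the form $|u_n'(p)| \le u_n(p)/\varepsilon$ at any point $p$ whose distance to the appropriate endpoint of $J$ is at least $\varepsilon$ (via the mean value theorem on the corresponding one-sided subinterval and the monotonicity of $u_n'$).

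The next step is to exploit the strict inequality $\mu_J<1$ to build a non-negative comparison weight. Using $h\ge 0$ and the uniform condition \eqref{condinfty}, for each $\eta>0$ there is a threshold $\tilde R=\tilde R(\eta)$ with $h(x,s) \ge (q_{\infty}(x)-\eta)^{+}\, s$ for all $s\ge \tilde R$, a.e.\ $x\in J$ (the positive part is legitimate precisely because $h\ge 0$). Writing $w_\eta := (q_{\infty}-\eta)^{+}$, one has $w_\eta \uparrow q_{\infty}$ in $L^{1}(J)$ as $\eta\downarrow 0$, so by continuity of the principal Dirichlet eigenvalue with respect to the weight the first eigenvalue $\lambda_1(J;w_\eta)\to \mu_J<1$; I fix $\eta$ small so that $\lambda_1(J;w_\eta)<1$ and $w_\eta\not\equiv 0$. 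Shrinking $J$ to a slightly smaller closed interval $J^{\varepsilon}$ and invoking continuity and monotonicity of the eigenvalue in the domain, I fix $\varepsilon>0$ so small that $\lambda_1(J^{\varepsilon};w_\eta)<1$ and $w_\eta\not\equiv 0$ on $J^{\varepsilon}$. With $\varepsilon$ fixed, the derivative estimate forces $u_n>\tilde R$ on all of $J^{\varepsilon}$ for $n$ large: if the superlevel set $\{u_n>\tilde R\}$ (an interval, by concavity) did not contain $J^{\varepsilon}$, its relevant endpoint $p$ would satisfy $u_n(p)=\tilde R$ at distance $\ge\varepsilon$ from $\partial J$, whence $|u_n'(p)|\le \tilde R/\varepsilon$, and integrating $u_n'$ from $p$ to the maximum point gives $R_n \le \tilde R(1+|J|/\varepsilon)$, contradicting $R_n\to+\infty$.

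The final step is the comparison producing the contradiction. On $J^{\varepsilon}$ we have $u_n>\tilde R>0$, hence $u_n'' = -k_n(x,u_n) \le -h(x,u_n) \le -w_\eta(x)\,u_n$, so $u_n$ is a positive supersolution of $\varphi''+w_\eta\varphi=0$ on $J^{\varepsilon}$. Testing this inequality against the positive first eigenfunction $\varphi_1$ of $\varphi''+\lambda w_\eta\varphi=0$ on $J^{\varepsilon}$ (a Picone/Green identity computation, integrating by parts and using $\varphi_1|_{\partial J^{\varepsilon}}=0$ with $\varphi_1'>0$ at the left and $\varphi_1'<0$ at the right endpoint) yields $0 < (\lambda_1(J^{\varepsilon};w_\eta)-1)\int_{J^{\varepsilon}} w_\eta\,\varphi_1\,u_n$. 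Since the integral is strictly positive, this forces $\lambda_1(J^{\varepsilon};w_\eta)>1$, contradicting the choice $\lambda_1(J^{\varepsilon};w_\eta)<1$. Hence the uniform bound $R_J$ must exist.

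I expect the main obstacle to be the second step: securing the largeness $u_n>\tilde R$ on a \emph{fixed} subinterval \emph{uniformly} over all admissible $k_n$, while simultaneously absorbing the unavoidable loss $\eta$ in \eqref{condinfty} without destroying the oscillation. This is exactly where concavity (for the geometry of the superlevel sets and the derivative bound) and the strict inequality $\mu_J<1$ combined with eigenvalue continuity (for the weight and domain perturbations) are both essential; once these are in place, the concluding Sturm comparison is routine.
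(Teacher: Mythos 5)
Your proof is correct, and it shares the global skeleton of the paper's argument (contradiction with a sequence $u_n$, concavity of non-negative solutions on $J$, and a Sturm-type comparison against a principal Dirichlet eigenfunction, exploiting the gap $\mu_J<1$), but the central technical difficulty is resolved by a genuinely different device. The paper stays on the \emph{whole} interval $J=[x_1,x_2]$: it builds monotone approximating weights $q_n$ with $h(x,s)\geq q_n(x)s$ for $s\geq n$ and $q_n\to q_\infty$ uniformly a.e., fixes $N$ with $\nu_N\leq 1-\varepsilon$, and runs the Green/Sturm identity against the eigenfunction of $q_N$ on all of $J$; the set $J\setminus J'_n$ where $\tilde u_n\leq N$ then produces error terms, controlled by the local Carath\'{e}odory bound $\gamma_N$, which vanish in the limit $n\to\infty$ because concavity forces $\mathrm{meas}(J\setminus J'_n)\to 0$ (dominated convergence). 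You instead shrink the \emph{domain}: concavity forces $u_n>\tilde R$ on the fixed subinterval $J^{\varepsilon}$ once $\max_J u_n$ is large (the paper's chord estimate $u_n(x)\geq R_n\min\{x-x_1,x_2-x\}/(x_2-x_1)$, also a consequence of concavity, gives this even more directly than your derivative bound), after which a single clean Picone comparison on $J^{\varepsilon}$ produces the contradiction, with no limiting process and no error terms. What your route buys is a tidier endgame and a transparent use of $\mu_J<1$ as an open condition stable under perturbing both the weight and the interval; what it costs is precisely the two facts you invoke without proof, namely continuity of the principal eigenvalue in the weight (which the paper also needs, but under uniform a.e. convergence, where it is immediate) \emph{and} under shrinking of the interval. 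Both are standard and follow from the variational characterization of $\lambda_1$ (monotone convergence for the weight; extension by zero for monotonicity in the domain, plus a rescaling or cutoff argument for continuity), but a self-contained write-up should spell them out, since avoiding the domain-continuity ingredient is exactly what the paper's measure-theoretic splitting is designed to do.
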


We stress that the constant $R_{J}$ does not depend on the function $k(x,s)$.
Notice also that our assumptions are ``local'', in the sense that we do not require their validity on the whole domain.

\begin{proof}
Just to fix a notation along the proof, we set $J:=\mathopen{[}x_{1},x_{2}\mathclose{]}$.
By contradiction, suppose there is not a constant $R_{J}$ with those properties. So, for all $n>0$
there exists $\tilde{u}_{n}\geq0$ solution of \eqref{BVP-k} with $\max_{x\in J}\tilde{u}_{n}(x)=:\hat{R}_{n}>n$.

Let $q_{n}(x)$ be a monotone non-decreasing sequence of non-negative measurable functions such that
\begin{equation*}
h(x,s) \geq q_{n}(x) s, \quad \text{a.e. } x\in J, \; \forall \, s \geq n,
\end{equation*}
and $q_{n}\to q_{\infty}$ uniformly almost everywhere in $J$.
The existence of such a sequence comes from condition \eqref{condinfty}.

Fix $\varepsilon<(1-\mu_{J})/2$. Hence, there exists an integer $N >0$ such $q_{n}\not\equiv 0$ for each $n\geq N$ and
\begin{equation*}
\nu_{n} \leq 1-\varepsilon , \quad \forall \, n\geq N,
\end{equation*}
where $\nu_{n}>0$ is the first positive eigenvalue of the eigenvalue problem
\begin{equation*}
\varphi'' + \lambda q_{n}(x) \varphi = 0, \quad \varphi|_{\partial J}=0.
\end{equation*}
Now we fix $N$ as above and denote by
$\varphi$ the positive eigenfunction of
\begin{equation*}
\begin{cases}
\, \varphi''+ \nu_{N} q_{N}(x)\varphi=0 \\
\, \varphi(x_{1})=\varphi(x_{2})=0 ,
\end{cases}
\end{equation*}
with $\|\varphi\|_{\infty}=1$. Then $\varphi(x)>0$, $\forall \, x\in\mathopen{]}x_{1},x_{2}\mathclose{[}$,
and $\varphi'(x_{1})>0>\varphi'(x_{2})$.

For each $n\geq N$, let $J'_{n}\subseteq J$ be the maximal closed interval, such that
\begin{equation*}
\tilde{u}_{n}(x)\geq N, \quad \forall \, x\in J'_{n}.
\end{equation*}
By the concavity of the solution in the interval $J$ and the definition of $J'_{n}$,
we also have that
\begin{equation*}
 \tilde{u}_{n}(x)\leq N, \quad \forall \, x\in J\setminus J'_{n}.
\end{equation*}
Another consequence of the concavity of $\tilde{u}_{n}$ on $J$ ensures that
\begin{equation*}
\tilde{u}_{n}(x)\geq \dfrac{\hat{R}_{n}}{x_{2}-x_{1}}\min\{x-x_{1},x_{2}-x\}, \quad \forall x\in J,
\end{equation*}
(see~\cite{GaHaZa-2003} for a similar estimate).
Hence, if we take $n\geq 2N$, we find that $\tilde{u}_{n}(x)\geq N$, for all $x$ in the well-defined closed interval
\begin{equation*}
A_{n}:=\biggl{[}x_{1}+\dfrac{N}{\hat{R}_{n}}(x_{2}-x_{1}),x_{2}-\dfrac{N}{\hat{R}_{n}}(x_{2}-x_{1})\biggr{]}\subseteq J'_{n}.
\end{equation*}
By construction, ${\it meas}(J\setminus J'_{n})\leq {\it meas}(J\setminus A_{n})\to 0$ as $n\to \infty$.

Using a Sturm comparison argument, for each $n\geq N$, we obtain
\begin{equation*}
\begin{aligned}
0  & \geq \tilde{u}_{n}(x_{2})\varphi'(x_{2})-\tilde{u}_{n}(x_{1})\varphi'(x_{1})
 = \Bigl{[}\tilde{u}_{n}(x)\varphi'(x)-\tilde{u}'_{n}(x)\varphi(x)\Bigr{]}_{x=x_{1}}^{x=x_{2}}
\\ & = \int_{x_{1}}^{x_{2}}\dfrac{d}{dx}\Bigl{[}\tilde{u}_{n}(x)\varphi'(x)-\tilde{u}'_{n}(x)\varphi(x)\Bigr{]} ~\!dx
\\ & = \int_{J}\Bigl{[}\tilde{u}_{n}(x)\varphi''(x)-\tilde{u}''_{n}(x)\varphi(x)\Bigr{]} ~\!dx
\\ & = \int_{J}\Bigl{[}-\tilde{u}_{n}(x)\nu_{N}q_{N}(x)\varphi(x)+k(x,\tilde{u}_{n}(x))\varphi(x)\Bigr{]} ~\!dx
\\ & =  \int_{J}\Bigl{[}k(x,\tilde{u}_{n}(x))-\nu_{N}q_{N}(x)\tilde{u}_{n}(x)\Bigr{]}\varphi(x) ~\!dx
\\ & \geq \int_{J}\Bigl{[}h(x,\tilde{u}_{n}(x))-\nu_{N}q_{N}(x)\tilde{u}_{n}(x)\Bigr{]}\varphi(x) ~\!dx
\\ & = \int_{J'_{n}}\Bigl{[}h(x,\tilde{u}_{n}(x))-q_{N}(x)\tilde{u}_{n}(x)\Bigr{]}\varphi(x) ~\!dx +
(1-\nu_{N})\int_{J'_{n}}q_{N}(x)\tilde{u}_{n}(x)\varphi(x) ~\!dx
\\ & \quad + \int_{J\setminus J'_{n}}\Bigl{[}h(x,\tilde{u}_{n}(x))-\nu_{N}q_{N}(x)\tilde{u}_{n}(x)\Bigr{]}\varphi(x) ~\!dx.
\end{aligned}
\end{equation*}
Recalling that
\begin{equation*}
h(x,s)\geq q_{N}(x) s, \quad \text{a.e. } x\in J, \; \forall \, s\geq N,
\end{equation*}
we know that
\begin{equation*}
h(x,\tilde{u}_{n}(x))-q_{N}(x)\tilde{u}_{n}(x)\geq 0, \quad \text{a.e. } x\in J'_{n}, \; \forall \, n\geq N.
\end{equation*}
Then, using the Carath\'{e}odory assumption, which implies that
\begin{equation*}
|h(x,s)|\leq \gamma_{N}(x), \quad \text{a.e. } x\in J, \; \forall \, 0\leq s \leq N,
\end{equation*}
where $\gamma_{N}$ is a suitably non-negative integrable function, we obtain
\begin{equation*}
\begin{aligned}
0  & \geq
\int_{J'_{n}}\Bigl{[}h(x,\tilde{u}_{n}(x))-q_{N}(x)\tilde{u}_{n}(x)\Bigr{]}\varphi(x) ~\!dx +
(1-\nu_{N})\int_{J'_{n}}q_{N}(x)\tilde{u}_{n}(x)\varphi(x) ~\!dx
\\ & \quad + \int_{J\setminus J'_{n}}\Bigl{[}h(x,\tilde{u}_{n}(x))-\nu_{N}q_{N}(x)\tilde{u}_{n}(x)\Bigr{]}\varphi(x) ~\!dx
\\ & \geq
\varepsilon N \int_{J'_{n}}q_{N}(x)\varphi(x) ~\!dx + \int_{J\setminus J'_{n}}\Bigl{[}-\gamma_{N}(x)-N\nu_{N}q_{N}(x)\Bigr{]} ~\!dx
\\ & =
\varepsilon N \int_{J}q_{N}(x)\varphi(x) ~\!dx -  \varepsilon N\int_{J\setminus J'_{n}} q_{N}(x)\varphi(x) ~\!dx
\\ & \quad - \int_{J\setminus J'_{n}}\Bigl{[}\gamma_{N}(x) + N\nu_{N}q_{N}(x)\Bigr{]} ~\!dx.
\end{aligned}
\end{equation*}
Passing to the limit as $n\to \infty$ and using the dominated convergence theorem, we obtain
\begin{equation*}
0 \geq \varepsilon N \int_{J}q_{N}(x)\varphi(x) ~\!dx > 0,
\end{equation*}
a contradiction.
\end{proof}

\begin{remark}\label{rem-6.3}
We note that the Neumann or the periodic boundary condition in problem \eqref{BVP-k} has no role in the proof of Lemma~\ref{lemma_RJ}.
In fact, the key point is that we deal only with non-negative solutions of the equation $u''+k(x,u)=0$.
Consequently, the same thesis holds also when the relation ${\mathscr{B}}(u,u') = \underline{0}$ defines any boundary condition.
$\hfill\lhd$
\end{remark}

\bibliographystyle{elsart-num-sort}
\bibliography{Feltrin_Zanolin_biblio}

\bigskip
\begin{flushleft}

{\small{\it Preprint}}

\end{flushleft}

\end{document}